
\documentclass[twoside]{article}

\usepackage{amsmath}
\usepackage{enumerate}
\usepackage{epsfig}
\usepackage{subfigure}
\usepackage{mor}
\usepackage{cases}



\usepackage[mathscr]{eucal}
\usepackage{graphicx}
\usepackage{latexsym}
\usepackage{amssymb}
\usepackage{epsfig}
\DeclareMathAlphabet{\mathpzc}{OT1}{pzc}{m}{it}
\usepackage{enumerate}

\usepackage{amsfonts}

\usepackage{graphicx}

\usepackage{amsfonts}

\usepackage{color}

 \newtheorem{condition}{\bf Condition}[section]

\newcommand{\step}{\lambda}
\newcommand{\stepsum}{\Lambda}

\newcommand{\cla}{\mathcal{A}}

\newcommand{\cls}{\mathcal{S}}
\newcommand{\cln}{\mathcal{N}}
\newcommand{\clp}{\mathcal{P}}
\newcommand{\clm}{\mathcal{M}}
\newcommand{\cll}{\mathcal{L}}

\newcommand{\clf}{\mathcal{F}}
\newcommand{\clg}{\mathcal{G}}

\newcommand{\clc}{\mathcal{C}}
\newcommand{\clb}{\mathcal{B}}

\newcommand{\RR}{\mathbb{R}}
\newcommand{\NN}{\mathbb{N}}

\newcommand{\PP}{\mathbf{P}}
\newcommand{\EE}{\mathbf{E}}

\newcommand{\be}{\begin{equation}}
\newcommand{\ee}{\end{equation}}

\newcommand{\beq}{\begin{eqnarray*}}
\newcommand{\eeq}{\end{eqnarray*}}

\newcommand{\ba}{\begin{aligned}}
\newcommand{\ea}{\end{aligned}}
\newcommand{\bes}{\begin{equation*}}
\newcommand{\ees}{\end{equation*}}
\newcommand{\bsp}{\begin{split}}
\newcommand{\esp}{\end{split}}

%


\received{}                              %
\revised{}                               %
\pubyear{0x}                             %
\pubmonth{Xxxxxxx}                       %
\volume{xx}                              %
\issue{x}                                %
\pages{xxx--xxx}                         %
\firstpage{0xxx}                         %
\DOI{10.1287/moor.xxxx.xxxx}             %
\startpagenumber{1}                      %


\title{A Numerical Scheme for Invariant Distributions of Constrained Diffusions}
\ShortTitle{Numerical Scheme for Invariant Distributions of Constrained Diffusions}
\ShortAuthors{A. Budhiraja, J. Chen and S. Rubenthaler}
\NumberOfAuthors{3}
\FirstAuthor{Amarjit Budhiraja}
\FirstAuthorAddress{Department of Statistics and Operations
Research, University of North Carolina at Chapel Hill, Hanes
Hall, CB\#3260, Chapel Hill, NC 27599-3260}
\FirstAuthorEmail{budhiraj@email.unc.edu}
\FirstAuthorURL{http://www.unc.edu/~budhiraj/}
\SecondAuthor{Jiang Chen }
\SecondAuthorAddress{Department of Statistics and Operations
Research, University of North Carolina at Chapel Hill, Hanes
Hall, CB\#3260, Chapel Hill, NC 27599-3260}
\SecondAuthorEmail{jiangc@live.unc.edu}
\SecondAuthorURL{}
\ThirdAuthor{Sylvain Rubenthaler }
\ThirdAuthorAddress{Laboratoire de math\'ematiques J.A. Dieudonn\'e,
Universit\'e de Nice-Sophia Antipolis, Parc Valrose, 06108 Nice
cedex 02, France}
\ThirdAuthorEmail{rubentha@unice.fr}
\ThirdAuthorURL{http://math.unice.fr/~rubentha/}

\keywords{Reflected Diffusions, Heavy Traffic Theory, Stochastic Networks, Skorohod Problem, Invariant Measures, Stochastic Algorithms.}

\MSCcodes{Primary:  60J60; Secondary: 60J70, 60K25.    } 

\ORMScodes{Primary: Probability: diffusion, applications; Secondary: Probability: Markov processes, stochastic model applications} 

\begin{document}
\maketitle

\begin{abstract}
	Reflected diffusions in polyhedral domains are commonly used as approximate models for stochastic processing networks in heavy traffic.
	Stationary distributions of such models give useful information on the steady state performance of the corresponding stochastic networks and thus it is important to develop reliable and efficient algorithms for numerical computation of such distributions.  In this work we propose and analyze
	a Monte-Carlo scheme based on an Euler type discretization of the reflected stochastic differential equation using a single sequence of time discretization steps which decrease to zero as time approaches infinity.  Appropriately weighted empirical measures constructed from the  simulated discretized reflected diffusion are proposed as approximations for the  invariant probability measure of the true diffusion model.  Almost sure consistency results are established
	that in particular show that weighted averages of polynomially growing continuous functionals evaluated on the discretized simulated system converge
	a.s. to the corresponding integrals with respect to the invariant measure.  Proofs rely on constructing suitable Lyapunov functions for tightness and uniform integrability and characterizing almost sure limit points through an extension of Echeverria's criteria for reflected diffusions.  Regularity properties of the underlying Skorohod problems play a key role in the proofs.  Rates of convergence for suitable families of test functions are also obtained. A key advantage of Monte-Carlo methods is the ease of implementation, particularly for high dimensional problems.  A numerical example of a eight dimensional Skorohod problem is presented to illustrate the applicability of the approach.
\end{abstract}
\normalsize

\section{Introduction}\label{intr}
Reflected diffusion processes in polyhedral domains have been
proposed as approximate models for critically loaded stochastic
processing networks.  Starting with the influential paper of
Reiman\cite{Reiman84}, there have been many works\cite{Peterson91,
DaiKur95, MandelbaumPats98,Yamada95,Kushner01,Williams98_Diff} that
justify approximations via reflected diffusions rigorously by
establishing a limit theorem under appropriate heavy traffic
assumptions. Many performance measures for stochastic networks are
formulated to capture the long term behavior of the system and a key
object involved in the computation of such measures is the
corresponding steady state distribution. Although classical heavy
traffic limit theorems only justify approximations of the network
behavior through the associated diffusion limit over any fixed
finite time horizon, there are now several results\cite{GamZee,
BudhirajaLee09, BudhirajaLiu102} that prove, for certain generalized
Jackson network models, the convergence of steady state
distributions of stochastic networks to those of the associated
limit diffusions. Such limit theorems then lead to the important
question:  How does one compute the stationary distributions of
reflected diffusions? Indeed, one of the main motivations for
introducing diffusion approximations in the study of stochastic
processing systems is the expectation that diffusion models are
easier to analyze than their stochastic network counterparts.
Classical results of Harrison and Williams \cite{HarWil872} show that
under certain geometric conditions on the underlying problem data,
stationary densities of reflected Brownian motions have explicit
product form expressions.  However, once one moves away from this
special family of models there are no explicit formulas and thus one
needs to use numerical procedures.

The objective of the current work is to propose and study the
performance of one such numerical procedure for computing stationary
distributions of reflected diffusions in polyhedral domains.  For
diffusions in $\RR^m$ there are two basic approaches for computation
of invariant distributions:  PDE methods and Monte-Carlo methods.
PDE approaches are based on the well known basic property that
invariant densities of diffusions can be characterized as solutions
of certain stationary Fokker-Planck equations.  For reflected Brownian motions
in polyhedral domains the papers\cite{DaiKurpre, Kurtz91, DaiHarr92}
develop similar characterization results.  The characterization in
this case is formulated for the invariant density together with
certain boundary densities and is given in terms of the second order
differential operator describing the underlying unconstrained
dynamics and  a collection of first order operators corresponding to
the boundary reflections.  Using this characterization as a starting
point Dai and Harrison\cite{DaiHarr92} develop an approximation
scheme for the stationary density by constructing projections on to
certain finite dimensional Hilbert spaces that are described in
terms of the above collection of differential operators.  Although
PDE methods such as above are quite efficient for settings where the
state dimension $m$ is small, one finds that Monte-Carlo methods,
based on the use of the ergodic theorem, have advantages in higher
dimensions.  With this in mind, we will propose and study here a
Monte-Carlo method for the computation of stationary distributions.
Approximations of invariant distributions of diffusions in $\RR^m$
using simulation of paths have been studied in several works
\cite{Bas,Pell,Talay02,Talay87,LambertonPage02}.
 One of the key difficulties in using
simulation methods is that paths of diffusions cannot be simulated
exactly and so one has to contend with two sources of errors:
Discretization of the SDE and finite time empirical average
approximation for the steady state behavior. In particular, the long
term behavior of the discretized SDE could, in general, be quite
different from that of the original system and thus a performance
analysis of such Monte-Carlo schemes requires a careful
understanding of the stability properties of the underlying systems.

The Monte-Carlo approach studied in the current work is inspired by the papers
\cite{Bas}, \cite{Pell}, \cite{LambertonPage02} which have analyzed the properties
of weighted empirical measures constructed from a Euler scheme,
based on a single sequence of time discretization steps decreasing
to zero, for diffusions in $\RR^m$.  For multi-dimensional
diffusions with reflection one first needs to describe a suitable
analog of an `Euler discretization step'.  In order to do so, we
begin with a precise description of the stochastic dynamical system
of interest.

Let $G \subset \RR^m$ be the convex polyhedral cone in $\RR^m$ with
the vertex at origin given as the intersection of half spaces $G_i$,
$i = 1, \dots, N$. Let $n_i$ be the unit vector associated with
$G_i$ via the relation $$G_i = \{ x \in \RR^m: \langle x, n_i\rangle
\ge 0 \}.$$ Denote the boundary of a set $S \subset \RR^m$ by
$\partial S$. We will denote the set $\{ x \in \partial G: \langle
x, n_i\rangle = 0 \}$ by $F_i$. For $x \in \partial G$, define the
set, $n(x)$, of unit inward normals to $G$ at $x$ by
$$n(x) \doteq \{r: |r| = 1, \langle r, x-y \rangle \le 0, \forall y \in
G\}.$$ With each face $F_i$ we associate a unit vector $d_i$ such
that $\langle d_i, n_i \rangle >0$. This vector defines the
direction of constraint associated with the face $F_i$. For $x \in
\partial G$ define
$$d(x) \doteq \left\{ d\in \RR^m: d = \sum_{i \in \text{In}(x)} \alpha_i d_i ; \alpha_i
\ge 0; |d|=1\right\},$$ where $$\text{In}(x) \doteq \{ i \in \{1,2,
\cdots, N\}: \langle x, n_i\rangle = 0 \}.$$
Roughly speaking, the set $d(x)$ represents the set of permissible directions of constraint available at a point $x \in \partial G$.  In a typical stochastic
network setting this set valued function is determined from the routing structure of the network and governs the precise constraining mechanism that is used.
This mechanism specifies how a RCLL trajectory $\psi$ with values in $\RR^m$ is constrained to form a new trajectory with values in $G$, through the associated
Skorohod problem, which is defined as follows.


 Let $D([0, \infty) :
\RR^m)$ denote the set of functions mapping $[0, \infty)$ to $\RR^m$
that are right continuous and have left limits. We endow $D([0,
\infty) : \RR^m)$ with the usual Skorokhod topology. Let
$$D_G([0, \infty) : \RR^m) \doteq \{\psi \in D([0, \infty) : \RR^m): \psi(0) \in G\}.$$
For $\eta \in D([0, \infty) : \RR^m)$ let $|\eta|(T)$ denote the
total variation of $\eta$ on $[0,T]$ with respect to the Euclidean
norm on $\RR^m$.
\begin{definition} Let $\psi \in D_G([0, \infty) : \RR^m)$ be given. Then the pair $(\phi, \eta) \in
    D([0, \infty) : \RR^m)\times D([0, \infty) : \RR^m)$ solves the Skorokhod problem (SP) for
    $\psi$ with respect to $G$ and $d$ if and only if $\phi(0) =\psi(0)$, and for all
    $t \in [0,\infty)$
    \begin{enumerate}[(i)]
        \item $\phi(t)=\psi(t) +\eta(t)$;
        \item $\phi(t)\in G$;
        \item $|\eta|(t)<\infty$;
        \item $|\eta|(t)=\int_{[0,t]} I_{\{\phi(s)\in \partial G\}}d|\eta|(s)$;
        \item There exists Borel measurable $\gamma:[0,\infty)\rightarrow \RR^m$ such that
        $\gamma(t)\in d(\phi(t))$, $d|\eta|$-almost everywhere and
        $$\eta(t)= \int_{[0,t]}\gamma(s)d|\eta|(s).$$
    \end{enumerate}
\end{definition}
In the above definition $\phi$ represents the constrained version of $\psi$ and $\eta$ describes the correction applied to $\psi$ in order to produce
$\phi$.
On the domain $D \subset D_G([0, \infty) : \RR^m)$ on which there is
a unique solutions
 to the Skorokhod problem we define the Skorokhod map (SM) $\Gamma$ as $\Gamma(\psi)\doteq \phi$, if $(\phi, \psi-\phi)$ is the unique solution of the Skorokhod problem posed by $\psi$. We will make the following assumption on the regularity of the Skorokhod map defined by the data  $\{(d_i,n_i);i =1,2,\cdots,N\}$.
\begin{condition}\label{skolip}
    The Skorokhod map is well defined on all of $D_G([0, \infty) : \RR^m)$, that is, $D=D_G([0, \infty) : \RR^m)$ and the SM is Lipschitz continuous in the following sense. There exists a $K <\infty$ such that for all $\phi_1, \phi_2 \in D_G([0, \infty) : \RR^m)$,
    $$\sup_{0\le t < \infty}|\Gamma(\phi_1)(t)-\Gamma(\phi_2)(t)| < K \sup_{0\le t < \infty}|\phi_1(t)-\phi_2(t)|.$$
\end{condition}

We will also make the following assumption on the problem data.
\begin{condition}\label{completeS}
    For every $x\in \partial G$, there is a $n\in n(x)$ such that
    $\langle d, n \rangle >0$ for all $d\in d(x)$.
\end{condition}
The above condition is equivalent to the assumption that the $N
\times N$ matrix with $(i,j)^{th}$ entry $\langle d_i, n_j \rangle $
is complete-S (see \cite{DupuisIshii91, ReimanWilliams88}). When $G
= \RR_+^m$ and $N=m$, it is known that Condition \ref{skolip}
implies Condition \ref{completeS} (see \cite{TaylorWilliams93}). An
important consequence of Condition \ref{completeS} that will be used
in our work is the following result from \cite{Budhiraja03} (see
also \cite{DaiWilliams95}).

\begin{lemma}\label{lemma:g}
Suppose that Condition \ref{completeS} holds. Then there exists a $g
\in C^2_b (\RR^m)$ such that \be \langle \nabla g(x), d_i \rangle \ge 1
\quad \forall x \in F_i,
 \quad i \in \{1,...,N\}. \ee
\end{lemma}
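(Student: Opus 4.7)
The plan is to use Condition \ref{completeS} to construct, for each possible set $I$ of active-face indices that occurs on $\partial G$, a vector $u_I \in \RR^m$ whose inner product with every constraint direction $d_i$, $i \in I$, is strictly positive, and then to assemble the corresponding linear functions into a single smooth $g$ via a partition of unity adapted to the face stratification of $\partial G$.

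First, I would translate the completely-S hypothesis into local data. For each nonempty $I \subset \{1,\ldots,N\}$ such that $F_I := \bigcap_{i \in I} F_i$ is nonempty, applying completely-S to the principal submatrix $(\langle d_i, n_j\rangle)_{i,j \in I}$ yields positive scalars $(v_I^j)_{j \in I}$ with $\sum_{j \in I} v_I^j \langle d_i, n_j\rangle > 0$ for every $i \in I$. Setting $u_I := \sum_{j\in I} v_I^j n_j$, the linear map $\ell_I(x) := \langle u_I, x\rangle$ vanishes on $F_I$, is nonnegative on $G$, and satisfies $\langle \nabla \ell_I, d_i\rangle > 0$ for every $i \in I$.

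Next, stratify $\partial G$ by the active-index map $I(x) := \{i : \langle x, n_i\rangle = 0\}$; only finitely many index sets arise. Choose smooth cutoffs $\chi_I$ equal to $1$ on a neighborhood of $F_I$, supported in a narrow tube around $F_I$ that is uniformly bounded away from every face $F_j$ with $j \notin I$, and with bounded derivatives up to order two. Define
\[
g(x) := \sum_I c_I \chi_I(x) \ell_I(x)
\]
for positive constants $c_I$ to be chosen inductively, processing the sets $I$ in decreasing order of $|I|$ (smallest-dimensional strata first). On $F_I$ the dominant term of $\nabla g(x)$ is $c_I u_I$, because $\chi_I \equiv 1$ and $\ell_I \equiv 0$ on $F_I$; the contributions from the already-fixed $c_J$ with $J \supsetneq I$ are bounded by constants determined by the geometry and the previously chosen $c_J$'s. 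Taking $c_I$ sufficiently large then forces $\langle \nabla g, d_i\rangle \geq 1$ for every $i \in I$ on all of $F_I$. Once every $c_I$ is fixed, $\nabla g$ and $\nabla^2 g$ are bounded since the sum is finite and each building block is smooth with bounded derivatives on its support; if a bounded $g$ is also needed, I would compose with a smooth cutoff transverse to $\partial G$ to truncate the linear growth away from the boundary, without affecting the value of $\nabla g$ on $\partial G$.

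The main obstacle is controlling the transition-zone terms $c_J \ell_J(x) \nabla \chi_J(x)$ for $J \supsetneq I$, which appear on $F_I$ near the smaller face $F_J$: $\ell_J$ does not vanish on $F_I \setminus F_J$ (it scales linearly with distance from $F_J$) while $\nabla \chi_J$ may be large and points in an uncontrolled direction. The remedy is exactly the inductive ordering by decreasing $|I|$: once $c_J$ has been fixed, the corresponding transition contribution is a fixed continuous function whose supremum is controlled by a geometric constant times $c_J$, so the subsequent choice of $c_I$ can be made large enough to dominate it, using the strict positivity of $\langle u_I, d_i\rangle$ established in Step 1.
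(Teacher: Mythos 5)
The paper does not actually prove this lemma; it cites the construction in \cite{Budhiraja03} and only remarks that a minor modification extends the function from $G$ to $\RR^m$. Your blind reconstruction follows the standard ``linear functionals from completely-$\mathcal{S}$ normals, glued via a face-adapted partition of unity'' strategy that underlies the cited construction, so the overall route is the right one. There are, however, two concrete issues in your write-up.

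First, the cutoff specification is self-contradictory: you ask for $\chi_I$ to be identically $1$ on a neighborhood of $F_I$ and simultaneously to be supported away from every $F_j$ with $j\notin I$. Since $F_I\cap F_j = F_{I\cup\{j\}}$ is in general a nonempty subset of $F_I$, no such $\chi_I$ exists. What you actually want (and what the later bookkeeping implicitly uses) is that $\chi_I\equiv 1$ on the part of $F_I$ that is bounded away from all deeper strata, with $\chi_I$ tapering to $0$ as one approaches any $F_j$ with $j\notin I$.

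Second, once the cutoffs are fixed correctly, the inductive step ``take $c_I$ large enough to dominate'' is not sufficient on its own. At a point $x$ with $\mathrm{In}(x)=I$ that is near a deeper face $F_J$ (with $J\supsetneq I$), $\chi_I(x)$ is small (it is tapering to $0$), so increasing $c_I$ buys almost nothing there, while the bad transition term $c_J\,\langle\nabla\chi_J(x),d_i\rangle\,\ell_J(x)$ from the already-fixed deeper stratum is present. The induction goes through only if the tube radii are chosen \emph{nested}: the region where $\chi_J$ is transitioning (and hence where $\nabla\chi_J\neq 0$) must lie inside the plateau $\{\chi_J\equiv 1\}$-region of \emph{no} cutoff, but inside the plateau where the next-coarser cutoff with $i\in$ index set is already $\equiv 1$; more precisely, one arranges matters so that at each $x\in F_i$ there is some $K\ni i$ with $\chi_K(x)=1$, and the (finitely many) bad transition terms from strictly deeper $J$ are uniformly bounded. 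With that nesting in place, the recursion in decreasing $|I|$ does close, but without it the claim that the previously-fixed contributions are ``dominated'' breaks exactly where $\chi_I$ is small. Finally, the closing remark about composing with a cutoff to obtain boundedness is unnecessary: if each $\chi_I$ is supported in a tube of bounded width around $F_I$, then $\ell_I$ is automatically bounded on $\mathrm{supp}\,\chi_I$ (since $\ell_I$ vanishes on $F_I$ and grows linearly with the distance to $F_I$), so $g$ is already in $C^2_b(\RR^m)$.
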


We remark here that the function constructed in \cite{Budhiraja03} is defined only on $G$, however a minor modification of the construction there gives a $C^2$ extension to all of $\RR^m$.

We refer the reader to \cite{HarrisonReiman81, DupuisIshii91,
DupuisRamanan99} for sufficient conditions under which Condition
\ref{skolip} and Condition \ref{completeS} hold. For example, the
paper \cite{DupuisRamanan99} shows that if $G=\RR_+^m$, $N=m$ and
the square matrix $D=[d_1,...,d_m]$ is of the form
$D=M(I-V)$, where $M$ is a diagonal matrix with positive diagonal entries, $V$ is off diagonal and the
spectral radius of $|V|$ is less than 1,  then both Conditions
\ref{skolip} and \ref{completeS} hold.  Here $|V|$ represents the matrix with entries $(|V_{ij}|)$, where $V_{ij}$ is the $(i,j)$-th entry of $V$.

We now describe the constrained diffusion process that will be
studied in this paper. Let $(\Omega, \clf, \PP)$ be a complete
probability space on which is given a filtration $\{\clf_t\}_{t \ge
0}$ satisfying the usual hypotheses. Let $(W(t), \clf_t)$ be a
$m$-dimensional standard Wiener process on the above probability
space. For $x \in G$, denote by $X^x$ the unique solution to the
following stochastic integral equation, \be \label{consdf} X^x(t)=
\Gamma\left(x + \int_0^{\cdot} \sigma(X^x(s))dW(s)+ \int_0^{\cdot}
b(X^x(s))ds \right)(t),\ee where $\sigma: G \rightarrow \RR^{m\times
m}$ and $b: G \rightarrow \RR^{m}$ are  maps satisfying
the following condition.
\begin{condition} \label{coeff}
    There exists $a_1 \in (0, \infty)$ such that
    \be |\sigma(x)-\sigma(y)|+|b(x)-b(y)|\le a_1|x-y| \quad \forall x,y \in G\ee
    and \be |\sigma(x)| \le a_1,\quad |b(x)| \le a_1,\quad \forall x \in G.\ee
\end{condition}
Unique solvability of \eqref{consdf} can be shown using the above condition and the regularity assumption on the Skorokhod map. In fact, the classical
method of Picard iteration gives the following:
\begin{theorem}
    For each $x \in G$ there exists a unique pair of continuous $\{\clf_t\}$ adapted process $(X^x(t),k(t))_{t \ge 0}$ and a progressively measurable process $(\gamma(t))_{t\ge 0}$ such that the following hold:
    \begin{enumerate}[(i)]
        \item $X^x(t)\in G$, for all $t \ge 0$, a.s.
        \item For all $t \ge 0$, \be \label{ins1928} X^x(t)= x + \int_0^t \sigma(X^x(s))dW(s)+ \int_0^t b(X^x(s))ds +k(t),\ee a.s.
        \item For all $T \in [0,\infty)$,
        $$|k|(T) < \infty \quad \text{a.s.}$$
        \item Almost surely, for every $t \ge 0$,
        $$|k|(t)=\int_0^t I_{\{X^x(s) \in \partial G\}}d|k|(s),$$ $k(t)=\int_0^t \gamma(s)d|k|(s)$, and $\gamma(s)\in d(X^x(s))$ a.e, $[d|k|]$.
    \end{enumerate}
\end{theorem}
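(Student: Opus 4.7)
The plan is to use Picard iteration, exactly as suggested in the statement, with the Lipschitz continuity of the Skorokhod map (Condition 2.1) supplying the contraction that classical Picard arguments get from the identity.

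First I would set $X_0^x(t)\equiv x$ and define recursively
\[
  X_{n+1}^x(t) \doteq \Gamma\Bigl(x+\int_0^{\cdot}\sigma(X_n^x(s))\,dW(s)+\int_0^{\cdot} b(X_n^x(s))\,ds\Bigr)(t).
\]
Each iterate is continuous (because $\Gamma$ is Lipschitz in sup-norm and its input is continuous) and $\{\mathcal{F}_t\}$-adapted (because the Skorokhod map is a causal map of its path, so $\Gamma(\psi)(t)$ depends only on $\psi|_{[0,t]}$). Applying the Lipschitz bound of Condition 2.1 on the interval $[0,t]$ gives
\[
  \sup_{0\le s\le t}|X_{n+1}^x(s)-X_n^x(s)|^2
  \le K^2 \sup_{0\le s\le t}\Bigl|\int_0^s(\sigma(X_n^x)-\sigma(X_{n-1}^x))\,dW + \int_0^s(b(X_n^x)-b(X_{n-1}^x))\,du\Bigr|^2,
\]
and then Doob's $L^2$ maximal inequality together with the Lipschitz bound on $\sigma,b$ (Condition 2.3) yields, for some constant $C=C(a_1,K,T)$,
\[
  \EE\sup_{0\le s\le t}|X_{n+1}^x(s)-X_n^x(s)|^2 \le C\int_0^t \EE\sup_{0\le u\le s}|X_n^x(u)-X_{n-1}^x(u)|^2\,ds, \quad t\in[0,T].
\]

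Iterating this bound produces the standard $(Ct)^n/n!$ estimate, so $(X_n^x)$ is Cauchy in the Banach space of continuous adapted processes on $[0,T]$ under the norm $\|Y\|_T\doteq(\EE\sup_{[0,T]}|Y|^2)^{1/2}$ for every $T$. A subsequence converges a.s. uniformly on compacts to a continuous adapted limit $X^x$. Defining
\[
  \psi^x(t)\doteq x+\int_0^t\sigma(X^x(s))\,dW(s)+\int_0^t b(X^x(s))\,ds,\qquad k(t)\doteq X^x(t)-\psi^x(t),
\]
continuity of $\Gamma$ in sup-norm lets me pass to the limit in the Picard identity so that $X^x=\Gamma(\psi^x)$ and hence $(X^x,k)$ is the pair $(\phi,\eta)$ in the SP for $\psi^x$ a.s. Properties (i)--(iii) of the theorem follow directly from the SP definition; property (iv), including existence of the progressively measurable $\gamma(s)\in d(X^x(s))$, follows from part (v) of the SP definition applied pathwise together with a standard measurable selection to arrange progressive measurability of $\gamma$.

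Uniqueness is immediate from the same machinery: if $(\tilde X^x,\tilde k)$ is another solution with the same $W$, then $\tilde X^x=\Gamma(\tilde\psi^x)$ with $\tilde\psi^x$ defined analogously, and the same Doob/BDG bound plus Gronwall gives $\EE\sup_{[0,T]}|X^x-\tilde X^x|^2=0$ for every $T$, whence $\tilde k=\tilde X^x-\tilde \psi^x$ also coincides with $k$. The only genuine point requiring care, and the main obstacle, is confirming that the Skorokhod map preserves adaptedness and maps continuous paths to continuous paths with Lipschitz dependence retained on infinite horizons; both follow from Condition 2.1, which makes the uniform-in-$t$ Lipschitz estimate available on every bounded interval and enforces causality of the solution of the Skorokhod problem. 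Once this is in hand, the rest is the familiar Picard-contraction template.
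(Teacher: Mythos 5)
Your proposal is correct and follows exactly the route the paper alludes to (which the paper does not spell out beyond the phrase ``the classical method of Picard iteration gives the following''): iterate through the Skorokhod map, use Condition \ref{skolip} together with causality of $\Gamma$ to transfer the sup-norm Lipschitz bound to any bounded interval $[0,T]$, combine with Doob's $L^2$ maximal inequality and the Lipschitz bounds on $b,\sigma$ to get the Gronwall-type contraction, and then pass to the limit and read off (i)--(iv) from the Skorokhod problem definition. The only point one might dwell on further is the construction of the jointly progressively measurable selector $\gamma$, which you correctly flag as a standard measurable-selection step.
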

In this work we are interested in the invariant distributions of the strong Markov process $\{X^x\}$.  One of the basic results due to
Harrison and Williams\cite{HarWil87} (see also \cite{BudhirajaDupuis99}) on invariant distributions of such Markov processes says that if $b$ and $\sigma$ are constants and $\sigma$ is invertible,
then $X^x$ has a unique invariant probability measure if $b \in \clc^o$ (the interior of $\clc$), where
$$\clc \doteq \left\{-\sum_{i=1}^N \alpha_i d_i: \alpha_i \ge 0; i \in \{1, \cdots, N\}\right\}.$$
This result was extended to a setting with state dependent coefficients in \cite{AtarBudhirajaDupuis01} as follows.
%
We introduce the following two additional assumptions.
For $\delta \in (0,\infty)$, define
$$\clc(\delta)\doteq \{v \in \clc: \text{dist}(v, \partial \clc)\ge \delta\}.$$
\begin{condition}\label{cone}
    There exists a $\delta \in (0,\infty)$ 
    such that for all $x \in G$, 
    $b(x)\in \clc(\delta)$.
\end{condition}
\begin{condition}\label{nondeg}
    There exists $\underline{\sigma} \in (0,\infty)$ such that for all $x \in G$ and $\alpha \in \RR^m$, $$\alpha'(\sigma(x)\sigma'(x))\alpha \ge \underline{\sigma} \alpha'\alpha.$$
\end{condition}
The following is the main result of \cite{AtarBudhirajaDupuis01}.
\begin{theorem} Assume that Conditions \ref{skolip}-\ref{nondeg} hold. Then the strong Markov process $\{X^x(\cdot); x\in G\}$ is positive recurrent and has a unique invariant probability measure.
\end{theorem}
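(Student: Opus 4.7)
The plan is to verify the hypotheses of Meyn--Tweedie Foster--Lyapunov theory for continuous-time strong Markov processes, yielding positive Harris recurrence, and then to upgrade this to uniqueness of the invariant probability measure using the non-degeneracy in Condition~\ref{nondeg}. I would break the proof into three stages: (i) construct a $C^2$ Lyapunov function, (ii) use Ito's formula to obtain a Foster drift inequality and hence existence of an invariant measure, and (iii) deduce uniqueness via irreducibility.

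The main obstacle is stage (i). One seeks a $C^2$ function $V\colon\RR^m\to[0,\infty)$ whose restriction to $G$ is proper (level sets $\{V\le R\}\cap G$ compact), whose boundary gradient satisfies $\langle \nabla V(x),d_i\rangle\le 0$ on each face $F_i$, and for which $\gl V(x)\le -c<0$ outside a compact set $K\subset G$, where $\gl$ is the generator of the unconstrained SDE. The essential raw materials are Condition~\ref{cone}, which keeps $b(x)$ at distance at least $\delta$ from $\partial\clc$, and the boundary function $g$ of Lemma~\ref{lemma:g}. A natural candidate is of the form $V(x)=\Psi(x)-\kappa g(x)$ for an appropriate $\kappa>0$, where $\Psi$ is a smooth proper function whose gradient points in a direction $v$ chosen so that $\langle v,b(x)\rangle\le -c_0<0$ uniformly in $x\in G$ (exploiting Condition~\ref{cone}), and $-\kappa g$ bends the boundary gradient inward so that $\langle \nabla V,d_i\rangle\le 0$ at each face via Lemma~\ref{lemma:g}. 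Making $\Psi$ simultaneously proper on $G$ and compatible with the drift cone $\clc$ is where the geometry of the data $\{(d_i,n_i)\}$ enters most delicately; standard tricks include composing with an exponential or using a quadratic in a linear functional $\langle v,\cdot\rangle$ for $v$ chosen inside $\clc^*\cap G^*$.

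Once $V$ is in hand, Ito's formula applied to $V(X^x(\cdot))$ via the representation \eqref{ins1928} gives, after localization to control the local martingale term (the bound $|\sigma|+|b|\le 2a_1$ from Condition~\ref{coeff} helps here),
\[
\EE[V(X^x(t\wedge\tau_K))]-V(x)\le -c\,\EE[t\wedge\tau_K]+\EE\!\int_0^{t\wedge\tau_K}\!\langle \nabla V(X^x(s)),\gamma(s)\rangle\,d|k|(s),
\]
and the integrand of the last term is nonpositive by the boundary estimate in stage (i) together with the fact that $\gamma(s)\in d(X^x(s))$ is a nonnegative combination of the $d_i$'s. Consequently $\EE[\tau_K]\le V(x)/c<\infty$ for every $x\in G$, so the family of occupation measures $\{t^{-1}\int_0^tP_s(x,\cdot)\,ds\}_{t>0}$ is tight and Krylov--Bogolyubov produces at least one invariant probability measure.

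For stage (iii), the uniform ellipticity in Condition~\ref{nondeg}, together with the Lipschitz continuity of $\Gamma$ (Condition~\ref{skolip}) and Lipschitz coefficients (Condition~\ref{coeff}), yields a strong Feller transition kernel $P_t$ for each $t>0$, while a Stroock--Varadhan style support argument, applied to the unconstrained driving semimartingale and transferred back by the Lipschitz continuity of $\Gamma$, shows that $P_t(x,U)>0$ for every nonempty relatively open $U\subset G$. Strong Feller plus topological irreducibility makes every compact subset of $G$ petite, so the drift inequality from stage (ii) lifts to positive Harris recurrence by Meyn--Tweedie, and uniqueness of the invariant probability measure is immediate.
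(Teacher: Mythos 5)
This theorem is not proved in the present paper; it is quoted verbatim from Atar--Budhiraja--Dupuis (2001), whose Lyapunov machinery (the hitting-time function $T$ of \eqref{Tx} and Lemma \ref{propT}) the paper imports in Section \ref{sec:tight}. Your overall architecture --- Foster--Lyapunov drift for existence, strong Feller plus topological irreducibility for uniqueness via Meyn--Tweedie --- is a legitimate strategy, and your stages (ii) and (iii) are sound \emph{conditional} on stage (i). But stage (i), which you correctly identify as the crux, is left as a gesture rather than a construction, and that is where the genuine gap lies.

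Concretely, your plan needs a $C^2$ proper $\Psi$ on $G$ with $\langle\nabla\Psi(x),b(x)\rangle\le -c_0<0$ uniformly and $\langle\nabla\Psi,d_i\rangle$ bounded, so that $V=\Psi-\kappa g$ can absorb the boundary terms via Lemma \ref{lemma:g}. Taking $\Psi(x)=h(\langle v,x\rangle)$ as you suggest, properness forces $v$ into the interior of the dual cone of $G$, while the drift inequality forces $\langle v,w\rangle<0$ for all $w\in\clc(\delta)\cap\bar B(0,a_1)$, which since $\clc=-\mathrm{cone}(d_1,\ldots,d_N)$ amounts to $\langle v,d_i\rangle\ge 0$ for every $i$ with suitable strictness. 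You do not show that such a $v$ exists for a general collection $\{(d_i,n_i)\}$ satisfying Conditions \ref{skolip}--\ref{completeS}; this is a nontrivial geometric fact, equivalent to existence of a linear Lyapunov function for the associated fluid model, and establishing it is a result of comparable depth to the theorem itself. It cannot simply be asserted as a ``standard trick.''

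The cited proof avoids smooth constructions entirely and this is the main conceptual divergence. The function $T(x)=\sup_{z\in\cla(x)}\inf\{t:z(t)=0\}$ is merely Lipschitz (Lemma \ref{propT}(i)), so It\^o's formula is unavailable, but it has properness built in ($T(x)\ge c|x|$, Lemma \ref{propT}(ii)) and a drift inequality along deterministic constrained trajectories built in ($T(z(t))\le (T(x)-t)^+$, Lemma \ref{propT}(iii)), directly from the definition and Condition \ref{cone}. The Foster inequality for the Markov process is then obtained not via the generator but via \emph{pathwise comparison}: the Lipschitz property of the Skorohod map (Condition \ref{skolip}) bounds the difference between the reflected diffusion and the deterministic reflected trajectory $\xi$ with the same starting point by the running supremum of the stochastic-integral term, whose exponential moments are controlled via Condition \ref{expint}; this yields a drift inequality for $V=e^{\varpi T}$ exactly as in Lemma \ref{ineq} of the present paper. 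That route requires no smooth Lyapunov function, is unconditional on further geometric assumptions, and delivers exponential moment bounds and geometric ergodicity as a byproduct --- which is why the present paper reuses the same $V$ for its tightness argument. Your uniqueness step (iii) agrees with the standard treatment under Condition \ref{nondeg}, but as written the proposal does not constitute a proof because stage (i) is missing.
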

We remark that in \cite{AtarBudhirajaDupuis01} a somewhat weaker assumption than Condition \ref{cone} is used, which says that $b(x)\in \clc(\delta)$
for all $x$ outside a bounded set.  In the current work, for simplicity we will use the stronger form as in Condition \ref{cone}.
Conditions \ref{skolip}-\ref{nondeg} will be assumed to hold for the
rest of this work and will not be explicitly noted in the statements
of various results.

We now summarize some of the  notation that will be used in this work. For a Polish space $S$,
$\mathcal{P}(S)$ will denote the space of probability measures, and
$\mathcal{M}_F(S)$  the space of finite measures on $S$ endowed with
the usual topology of weak convergence. For a closed set $G \subset
\RR^m$, we say $f \in C_b^2(G)$, [respectively $f \in C_c^2(G)$] if
$f$ is defined on some open set $O \supset G$ and $f$ is a twice
continuously differentiable on $O$ with bounded first two
derivatives [respectively compact support]. For $\nu \in
\mathcal{P}(S)$ and a $\nu$-integrable $f: S \rightarrow \RR$, we
write $\int_S f d\nu$ as $\langle f, \nu \rangle$ or $\nu(f)$
interchangeably. We will use the symbol ``$\Rightarrow$'' or ``$\xrightarrow[]{\cll}$'' to denote
convergence in distribution. Let
$\mathbb{R}^m$ denote the set of 
$m$-dimensional real vectors.  Euclidean norm will be denoted by $|\cdot|$ and the corresponding inner product by
$\langle \cdot, \cdot \rangle$. The
symbols, $\xrightarrow[]{\PP}$, $\xrightarrow[]{L^p}$ denote
convergence in  probability and $L^p$ respectively. Denote by
$||\cdot||_\infty$ the supremum norm. A vector $v \in \RR^m$ is said to be nonnegative (and we write $v \ge 0$) if it is componentwise nonnegative.

\subsection{Numerical Scheme and Main Results}
Throughout this work, the unique invariant
measure for the Markov process $\{X^x\}$ will be denoted by $\nu$.
The goal of this work is to develop a convergent numerical procedure
for approximating $\nu$.
We now describe this procedure.

Let $\{\step_k\}_{k\ge 1}$ be a sequence of positive real numbers
such that \be \label{step} \step_k \rightarrow 0, \text{ as }
k\rightarrow \infty \text{ and letting } \stepsum_n := \sum_{k=1}^n
\step_k, \ \stepsum_n \rightarrow \infty \text{ as } n \rightarrow
\infty. 
\ee Note the condition is satisfied if $\step_n =
\frac{1}{n^\theta}$ with $\theta \in (0, 1]$. Define the map $\cls: G
\times \RR^m \rightarrow G$ by the relation
\be \label{ins2000}
\cls(x,v)=\Gamma(x+vi)(1),\ee where $i:[0,\infty) \rightarrow
[0,\infty) $ is the identity map.  The map $\cls$ will be used to construct an Euler discretization of the stochastic dynamical system
described by \eqref{ins1928}.  We now introduce the noise sequence that will be used in the Euler discretization of \eqref{ins1928}.

Let $\{U_{k,j};k \in \NN, j=1,...,m\}$ be an array of mutually
independent $\RR$ valued random variables, given on some probability
space $(\Omega,\clf,\PP)$, such that $\EE U_{k,j}=0$ and $\EE
U_{k,j}^2=1$, for all $k\in\NN, j=1,...,m$. We denote the $\RR^m$
valued random variable $(U_{k,1},...,U_{k,m})'$ by $U_k$. We will
make the following assumption on the array $\{U_{k,j}\}$.

\begin{condition}\label{expint}
For some $\alpha \in (0,\infty)$, \bes \EE e^{\lambda U_{k,j}} \le
e^{\alpha \lambda^2} \text{ for all } k\in \NN, \ j=1,...,m,\
\lambda \in \RR. \ees
\end{condition}

The above condition is clearly satisfied when $U_{k,j} \sim N(0,1)$.
Also, using well known concentration inequalities it can be checked
that the condition also holds if $\text{supp}(U_{k,j})$ is uniformly
bounded (see  Appendix for a proof of the latter statement).
Condition \ref{expint} will be assumed to hold throughout this work.

The Euler scheme is given as follows.  Define iteratively, sequences $\{X_k\}_{k\in \NN_0}$,
$\{Y_k\}_{k\in \NN_0}$ of $G$ and $\RR^m$ valued random variables,
respectively, as follows. Fix $x_0 \in G$.
\begin{equation}
\begin{cases}
X_0=x_0,\\
Y_{k+1} = X_k +b(X_k)\step_{k+1} + \sigma(X_k)\sqrt{\step_{k+1}}U_{k+1},\\
X_{k+1}=\cls(X_k, Y_{k+1}-X_k)\,.\end{cases}\label{scheme2012}\end{equation}
Note that $\{X_k\}$ is a sequence of $G$ valued random variables. The last equation of the above display describes a projection for the Euler step
that is consistent with the Skorohod problem associated with the problem data.

Define a sequence of $\clp(G)$ valued random variables as $$\nu_n =
\frac{1}{\stepsum_n} \sum_{k=1}^n \step_k \delta_{X_{k-1}}, \quad
n\in \NN.$$ The above random measures define our basic sequence of
approximations for the invariant measure $\nu$.  In particular, they yield an approximation for any integral of the form $\int_G f(x) d\nu(x)$
through the corresponding weighted averages:
\be
\label{wtdsum}
\frac{1}{\stepsum_n}\sum_{k=1}^n \step_kf(X_{k-1}).
\ee
The following is the first main result of this work.


\begin{theorem}\label{thm:cgce}
As $n\rightarrow \infty$, $\nu_n$ converges weakly to $\nu$, almost
surely.
\end{theorem}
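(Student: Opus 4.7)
The plan is to prove Theorem \ref{thm:cgce} in three stages: (I) establish almost sure tightness of $\{\nu_n\}$; (II) characterize every a.s.\ subsequential limit $\nu_\infty$ as an invariant measure of $X^x$ via an Echeverria--type criterion for reflected diffusions; and (III) invoke uniqueness of the invariant measure from the Atar--Budhiraja--Dupuis theorem recalled above to conclude $\nu_n\Rightarrow \nu$ almost surely.

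For step (I), I would construct a coercive Lyapunov function by combining the function $g$ of Lemma \ref{lemma:g} with a coercive term such as $V(x)=1+|x|^2+Ag(x)$ for large $A$. Using Condition \ref{cone} (drift strictly inside $\clc$), Condition \ref{nondeg}, the bounds of Condition \ref{coeff}, the sub-Gaussian control of $U_{k+1}$ from Condition \ref{expint}, and the Lipschitz property of $\Gamma$ from Condition \ref{skolip} to bound the reflection term, a one--step Taylor expansion should yield a drift inequality of the form
\begin{equation*}
\EE[V(X_{k+1})\mid \clf_k]\le V(X_k)-c\step_{k+1}V(X_k)+C\step_{k+1}.
\end{equation*}
Iterating this bound together with \eqref{step} and a standard Kronecker/martingale convergence argument yields $\sup_k \EE V(X_k)<\infty$ and
\begin{equation*}
\limsup_{n\to\infty}\frac{1}{\stepsum_n}\sum_{k=1}^n \step_k V(X_{k-1})<\infty \quad \PP\text{-a.s.},
\end{equation*}
from which a.s.\ tightness of $\{\nu_n\}$ follows by coercivity of $V$ and Markov's inequality.

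For step (II), fix $\omega$ off the exceptional null set and pass to a subsequence $n_j$ along which $\nu_{n_j}\Rightarrow \nu_\infty$. For test functions $f\in C_c^2(G)$, a second--order Taylor expansion combined with the decomposition $X_{k+1}-X_k=b(X_k)\step_{k+1}+\sigma(X_k)\sqrt{\step_{k+1}}U_{k+1}+\Delta\eta_{k+1}$, where $\Delta\eta_{k+1}=X_{k+1}-Y_{k+1}=\sum_{i=1}^N d_i\Delta\eta^i_{k+1}$ with $\Delta\eta^i_{k+1}\ge 0$ by the definition of $\cls$ in \eqref{ins2000}, produces
\begin{equation*}
f(X_{k+1})-f(X_k)=\step_{k+1}\gl f(X_k)+\Delta M_{k+1}+\sum_{i=1}^N \gdi f(X_k)\,\Delta\eta^i_{k+1}+R_{k+1},
\end{equation*}
where $\gl f=\tfrac12\mathrm{tr}(\sigma\sigma'\hess f)+\langle b,\grad f\rangle$, $\gdi f=\langle \grad f,d_i\rangle$, $\Delta M_{k+1}$ is an $\clf_{k+1}$--martingale difference, and $R_{k+1}$ is a higher--order remainder controlled through Conditions \ref{skolip}, \ref{coeff}, and \ref{expint}. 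Dividing the resulting telescoping sum by $\stepsum_n$, noting $|f(X_n)-f(X_0)|/\stepsum_n\to 0$, and applying a strong law for weighted martingale--difference sums (available since Condition \ref{expint} supplies moments of all orders), the limit along $n_j$ yields
\begin{equation*}
\langle \gl f,\nu_\infty\rangle+\sum_{i=1}^N \langle \gdi f,\mu_i\rangle=0,
\end{equation*}
where each $\mu_i$ is a finite measure on $F_i$ obtained as a weak limit of $\stepsum_n^{-1}\sum_k \Delta\eta^i_{k+1}\delta_{X_{k-1}}$; tightness and finiteness of the $\mu_i$'s follow from the Lyapunov bound applied to $g$, since $\langle \grad g,d_i\rangle\ge 1$ on $F_i$ bounds the accumulated reflection on each face. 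An extension of Echeverria's criterion to reflected diffusions --- as announced in the introduction --- then identifies $\nu_\infty$ with an invariant measure of $X^x$, and uniqueness forces $\nu_\infty=\nu$.

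The hardest step will be the boundary analysis in (II): one must show that the weighted sums of the one--step reflection increments $\Delta\eta^i_{k+1}$ converge to genuine finite measures supported on the correct face $F_i$, despite the fact that the Euler iterate $X_{k+1}$ need not lie on $\partial G$ and $\cls$ projects only once per step. Quantifying these increments will require sharpened use of the Lipschitz property of $\Gamma$ (Condition \ref{skolip}) to localize the reflection direction at the scale $\sqrt{\step_{k+1}}$, together with the moment bounds from Condition \ref{expint} for controlling the Taylor remainder $R_{k+1}$ and the martingale differences $\Delta M_{k+1}$. Combining these pieces with (I) and (III) then delivers the theorem.
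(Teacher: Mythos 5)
Your three–stage plan — tightness, Echeverria-type identification, uniqueness — matches the paper's architecture, and your description of step (II) is on target: the paper does construct boundary measures out of the one-step reflection increments and then shows, via the Lipschitz property of $\Gamma$ and the scale $\sqrt{\lambda_{k+1}}$, that these measures concentrate on the correct face $F_i$ in the limit (Lemmas \ref{lemma:precmp} and \ref{lemma:face}). Step (III) is then immediate. However, step (I) as written has a genuine gap.

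The Lyapunov function $V(x)=1+|x|^2+Ag(x)$ will not deliver the geometric drift $\EE[V(X_{k+1})\mid\clf_k]\le V(X_k)-c\step_{k+1}V(X_k)+C\step_{k+1}$, for two reasons. First, the sign on $Ag$ is wrong: since $\langle\nabla g,d_i\rangle\ge 1$ on $F_i$, you get $\langle\nabla V,d_i\rangle\ge A+2\langle d_i,x\rangle$, so large $A$ makes $V$ \emph{increase} under reflection — the opposite of what a Lyapunov argument wants. Switching to $-Ag$ does not repair this either, because the faces $F_i$ are unbounded and $\langle d_i,x\rangle$ can be arbitrarily large there, so no fixed $A$ controls the reflection contribution uniformly. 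Second, and more fundamentally, a genuine geometric drift $\cla V\le -cV+C$ cannot hold for a quadratic $V$ when $b$ and $\sigma$ are uniformly bounded (Condition \ref{coeff}): the generator applied to $V$ is only $O(|x|)$ while $V$ itself grows like $|x|^2$, so the inequality fails for large $|x|$; at best you obtain a sublinear drift $\cla V\le -c\sqrt V+C$. That is not strong enough for the Chow/Kronecker argument that converts moment bounds into almost-sure control of $\nu_n(V)$, which requires $\sup_t\EE V(\hat X(\step(t)))^{1+\rho}<\infty$ for some $\rho>0$. The paper instead uses the Atar--Budhiraja--Dupuis function $V(x)=e^{\varpi T(x)}$, where $T$ is the hitting-time functional from \eqref{Tx} with the crucial property $T(z(t))\le (T(x)-t)^+$ along reflected fluid paths driven by velocities in $\clc(\delta)$ (Lemma \ref{propT}(iii)). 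This single estimate handles drift and reflection simultaneously — you never need to control $\langle\nabla V,d_i\rangle$ on each face separately — and the exponential form converts the linear decrease of $T$ into a genuine geometric contraction for $V^{1+\zeta}$, all $\zeta\in[0,\rho]$. Note also that this contraction is established over macroscopic blocks of simulated time of length $\Delta$ (Lemma \ref{ineq}), not per Euler step; since $\step_k\to 0$ while $T$ decreases at a unit rate, there is no useful per-step contraction and the block structure is essential.
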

The above result ensures that \eqref{wtdsum} gives an almost surely consistent approximation for $\nu(f)$ for any bounded and continuous $f$.  In fact we
have a substantially stronger statement as follows:
\begin{theorem}
    \label{thm:unbddmom}
    There exists a $\zeta \in (0, \infty)$ such that for all continuous $f: G \to \RR$ satisfying $\limsup_{x\to \infty} e^{-\zeta |x|} |f(x)| = 0$, we have
    $\nu_n(f) \to \nu(f)$, a.s.
    \end{theorem}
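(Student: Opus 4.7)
The plan is to deduce Theorem~\ref{thm:unbddmom} from Theorem~\ref{thm:cgce} via a truncation argument, whose key input is an almost sure uniform exponential-moment bound on the sequence $\{\nu_n\}$. Concretely, I aim to produce a Lyapunov function $V \in C^2(G)$ with $V(x) \ge e^{\zeta_0|x|}$ for some $\zeta_0 > 0$, and show
\[
S \,:=\, \sup_{n\ge 1} \langle V, \nu_n \rangle \;=\; \sup_{n\ge 1} \frac{1}{\stepsum_n}\sum_{k=1}^n \step_k V(X_{k-1}) \;<\; \infty \quad \text{a.s.}
\]
Given this bound, weak convergence $\nu_n \Rightarrow \nu$ (Theorem~\ref{thm:cgce}) and Fatou's lemma applied to the lower semicontinuous $V$ yield $\langle V,\nu\rangle < \infty$. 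Now for a continuous $f$ with $|f(x)|e^{-\zeta_0|x|}\to 0$ and any $\varepsilon>0$, pick $R$ so that $|f(x)|\le \varepsilon V(x)$ for $|x|\ge R$, and a continuous cutoff $\chi_R$ equal to $1$ on $\{|x|\le R\}$ and vanishing outside $\{|x|\le R+1\}$. Then $f\chi_R$ is bounded and continuous, so $\nu_n(f\chi_R)\to \nu(f\chi_R)$ a.s., while $|\nu_n(f-f\chi_R)|\le 2\varepsilon \langle V,\nu_n\rangle \le 2\varepsilon S$ and similarly for $\nu$. Sending $n\to\infty$ and then $\varepsilon \downarrow 0$ proves the theorem.

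To construct $V$, I would set $V(x)=\exp(\zeta h(x))$ with $h(x)=\langle c,x\rangle + K g(x) + C_0$, where $g$ is the function from Lemma~\ref{lemma:g}, $K$ is a large constant, $c\in\RR^m$ is chosen using Condition~\ref{cone} so that $\langle c,b(x)\rangle \le -\kappa<0$ uniformly in $x\in G$, and $\zeta, C_0>0$ are to be tuned. The role of $Kg$ is to absorb the reflection: by Lemma~\ref{lemma:g}, $\langle \nabla h, d_i\rangle \ge K-|c|$ on each $F_i$, so choosing $K$ large makes $V$ nonincreasing along each direction of constraint. Consequently, the Skorokhod projection in the definition of $\cls$ cannot inflate $V$ by more than a factor $1+O(\step_{k+1})$, with the excess controlled through Condition~\ref{skolip} (which bounds $|X_{k+1}-Y_{k+1}|$ by the size of the Euler increment). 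The linear piece $\langle c,x\rangle$ delivers the interior drift contraction, while $\zeta$ will be taken small enough relative to $\alpha$ (from Condition~\ref{expint}) and $\|\sigma\|_\infty$ that the noise-induced expansion stays strictly below the drift contraction.

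With $V$ fixed, the core computation is a one-step Foster--Lyapunov inequality obtained by a second-order Taylor expansion of $V$ along~\eqref{scheme2012}. Using the exponential noise bound $\EE e^{\lambda U_{k,j}} \le e^{\alpha\lambda^2}$ of Condition~\ref{expint} to evaluate $\EE[V(Y_{k+1})\mid \clf_k]$, and the reflection estimate above to pass from $Y_{k+1}$ to $X_{k+1}$, I expect to obtain, for all sufficiently large $k$,
\[
\EE[V(X_{k+1}) \mid \clf_k] \;\le\; (1 - c_0 \step_{k+1})\,V(X_k) + c_1 \step_{k+1},
\]
for constants $c_0,c_1\in(0,\infty)$, using $\step_k\to 0$ to absorb $O(\step_k^2)$ remainders. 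Iterating gives $\sup_k \EE V(X_k)<\infty$; summing in the form $\step_{k+1} V(X_k) \le c_0^{-1}\big(V(X_k)-\EE[V(X_{k+1})\mid\clf_k]\big) + O(\step_{k+1})$ and invoking a Robbins--Siegmund / supermartingale argument on $\stepsum_n^{-1}\sum_{k=1}^n \step_k V(X_{k-1})$ yields the almost sure bound $S<\infty$ required above.

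The principal obstacle is the joint calibration of $(c,K,\zeta)$ in the one-step estimate. The reflection step forces $K$ to be large relative to $|c|$; the exponential noise moment contributes an expansion factor of order $\exp(O(\zeta^2 \alpha \|\sigma\|_\infty^2 \step_{k+1}))$ which must be strictly dominated by the drift contraction $\exp(-\zeta\kappa \step_{k+1})$; and the higher-order Taylor remainder, which itself involves exponential moments of $U_{k+1}$, must be absorbed using $\step_k\to 0$. The judicious insertion of $Kg$ into $h$, made possible by Lemma~\ref{lemma:g} and the complete-$S$ structure (Condition~\ref{completeS}), is exactly what makes the reflection step compatible with an exponentially growing Lyapunov function; once this piece is in place, the remainder is a discrete analogue of standard stability arguments for reflected diffusions.
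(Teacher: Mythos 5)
Your overall plan is the right one and matches the paper's strategy at a high level: establish an almost sure bound $\sup_n \nu_n(V)<\infty$ for an exponentially growing Lyapunov function $V$, then pass from bounded-continuous test functions (Theorem~\ref{thm:cgce}) to exponentially growing ones by truncation. Indeed the paper proves exactly this moment bound in Lemma~\ref{lemma:tight}, with $V(x)=e^{\varpi T(x)}$ and $T(x)\ge c|x|$ from Lemma~\ref{propT}(ii), and then runs essentially your truncation argument (via H\"older and Markov rather than your cutoff $\chi_R$, but the content is the same). So the final step is sound.

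The gap is in the construction of $V$. You set $h(x)=\langle c,x\rangle + Kg(x)+C_0$ and assert that $\langle\nabla h,d_i\rangle\ge K-|c|>0$ on $F_i$ ``makes $V$ nonincreasing along each direction of constraint.'' This has the sign backwards: the constraining term pushes the trajectory in the $+d_i$ direction (with nonnegative weights $\alpha_i$), so if $\langle\nabla h,d_i\rangle>0$ on $F_i$ then reflection \emph{increases} $h$ and hence $V$. To make reflection nonexpansive for $V$ you would need $\langle\nabla h,d_i\rangle\le 0$ on each $F_i$; adding $+Kg$ with $K$ large pushes in the wrong direction, and replacing it by $-Kg$ destroys the linear growth that you need on $G$ unless $c$ lies in the interior of the dual cone of $G$ and simultaneously satisfies $\langle c,d_i\rangle$ sufficiently negative---two requirements that are in tension and not generically compatible. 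Moreover, even leaving the sign aside, the reflection magnitude $L^i_k$ in one Euler step is of order $\sqrt{\step_{k+1}}(|U_{k+1}|+1)$ (cf.\ \eqref{Lki}), so the pointwise inflation factor of $V$ is $\exp(O(\zeta\sqrt{\step_{k+1}}|U_{k+1}|))$, not $1+O(\step_{k+1})$ as you claim; controlling this requires the conditional exponential-moment bound, and it is precisely at this point that the interaction between drift contraction, reflection, and noise must be calibrated. The role of $g$ and Lemma~\ref{lemma:g} in the paper is different from what you use it for: it is used in Lemma~\ref{lemma:precmp} to \emph{bound the total reflection mass} $\sum_i L^i_k$, not to make a Lyapunov function decrease under reflection.

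The paper sidesteps all of these calibration difficulties by working with the deterministic hitting-time functional $T(x)$ of \eqref{Tx}, imported from \cite{AtarBudhirajaDupuis01}. The crucial property is Lemma~\ref{propT}(iii): $T(z(t))\le(T(x)-t)^+$ for any reflected trajectory $z$ driven by a velocity field taking values in $\clc(\delta)$. This single inequality encodes, in a variational way, that the drift contraction dominates the reflection---which is exactly the coupling your explicit $h$ must achieve by hand and does not. Because $T$ is only Lipschitz (not $C^2$), the paper cannot do a one-step Taylor expansion as you propose; instead Lemma~\ref{ineq} compares $\hat X$ over a fixed window of length $\Delta$ against the deterministic trajectory $\xi$, uses Doob's maximal inequality (Lemma~\ref{exp}) to control the accumulated noise, and chooses $\Delta$ large enough that the deterministic decrease $e^{-\varpi(\Delta-\step_0)}$ beats the Doob constant. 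Your one-step inequality $\EE[V(X_{k+1})\mid\clf_k]\le(1-c_0\step_{k+1})V(X_k)+c_1\step_{k+1}$ is therefore not obviously attainable without either the smooth explicit $V$ whose construction is exactly what is missing, or the window argument of Lemma~\ref{ineq}. Until a concrete $V$ with the required drift, reflection, and growth properties is exhibited, the proposal has a genuine gap.
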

The key ingredient in the proof of the above almost sure limit
theorems is a certain Lyapunov function that was introduced in
\cite{BudhirajaLee07} to study geometric ergodicity properties of
reflected diffusions.  Using this Lyapunov function we establish
a.s. bounds on exponential moments of $\nu_n$ that are uniform in
$n$.  These bounds in particular guarantee tightness of
$\{\nu_n(\omega), n \ge 1\}$, for a.e. $\omega$.  Then the remaining
work, for proving the above theorems, lies in the characterization
of the limit points of $\nu_n(\omega)$.  For this we use an
extension of the well known Echeverria criterion for invariant
distributions of Markov processes that has been developed in
\cite{DaiKurpre, Kurtz91} (see also \cite{Budhiraja03}). Verification of
this criteria (stated as Theorem \ref{thm:inv} in the current work)
for a typical limit point $\nu_0$ of $\{\nu_n\}$ requires showing that,
$\nu_0$ along with a certain collection $\{\mu_0^i, i = 1, \cdots
N\}$ of finite measures supported on various parts of the boundary
of $G$ satisfy a relation of the form in \eqref{ins2045}.  The
measures $\{\mu_0^i\}$ are obtained by taking weak limits of certain
finite measures constructed from the Euler scheme.  Although these
pre-limit measures may place positive mass away from the boundary of the
domain, we argue using the regularity properties of the Skorohod map
(a key ingredient here is Lemma \ref{lemma:g}), that in the limit
these finite measures are supported on the correct parts of the
boundary.


Under additional assumptions, one can obtain rates of convergence as
follows. For $\alpha>0$, set
$$\stepsum_n^{(\alpha)}=\step_1^\alpha +...+ \step_n^\alpha.$$
Denote the normal distribution with mean $a$
and variance $b^2$ by $\cln(a,b^2)$. For $\phi \in C^3(G)$ (space of
three times continuously differentiable functions on $G$) and $v \in
\RR^m$, let $D^3 \phi(x) (v)^{\otimes 3}=\sum_{i,j,k}
D^3_{i,j,k}\phi(x)v_iv_jv_k$.

For $f \in C_c^2(G)$, define $\cla f:G \rightarrow \RR$ and $D_i f:G
\rightarrow \RR$; $i=1,...,N$ as \bes \cla f(x) = b(x) \cdot \nabla
f(x) + \frac{1}{2}\sigma'(x) D^2 f(x)\sigma(x), \quad x \in G,\ees
\bes D_i f(x) =
 d_i \cdot \nabla f(x),\quad x \in G,\ees
 where $\nabla$ is the gradient operator and $D^2$ is the $m\times
 m$ Hessian matrix.

\begin{theorem}\label{thm:rate}Assume
that $U_i$'s are i.i.d with common distribution $\mu$. There exists
a $\zeta \in (0,\infty)$ such that whenever $\phi \in C^2(G)$
satisfies $\lim_{|x| \rightarrow \infty}e^{-\zeta |x|}|\nabla
\phi(x)|^2=0$, we have the following:

(a) Fast-decreasing step. Suppose $\lim_{n \rightarrow
\infty}\frac{\stepsum_n^{(3/2)}}{\sqrt{\stepsum_n}}=0$, $D^2 \phi$ is
bounded and Lipschitz, and
\be \label{DD2}
\begin{cases}
\langle \nabla
\phi(x),d_i\rangle =0, &\forall x \in F_i, \forall i;\\
D^2\phi(x) d_i={\bf 0},&\forall x \in F_i, \forall i.
\end{cases}
\ee Then the following CLT holds: \bes \sqrt{\stepsum_n} \nu_n(\cla
\phi)\xrightarrow[]{\cll} \cln\left(0, \int_G |\sigma^T\nabla
\phi|^2d\nu\right).\ees

(b) Slowly decreasing step. Suppose that $\lim_{n \rightarrow
\infty}(1/\sqrt{\stepsum_n})\stepsum_n^{(3/2)}=\tilde{\step}\in
(0,+\infty]$, $\phi \in C^3(G)$ and $D^3 \phi$ is bounded and
Lipschitz.  Further suppose that \be \label{DD2D3}
\begin{cases}
\langle \nabla
\phi(x),d_i\rangle =0, &\forall x \in F_i, \forall i;\\
D^2\phi(x) d_i={\bf 0},&\forall x \in F_i, \forall i;\\
D^3_{\cdot jk}\phi(x) \cdot d_i=0,&\forall x \in F_i, \forall i,j,k.
\end{cases}
\ee Then we have
\begin{align}\label{slow1}
&\sqrt{\stepsum_n} \nu_n(\cla \phi)\xrightarrow[]{\cll}
\cln\left(\tilde{\step}\tilde{m}, \int_G |\sigma^T\nabla
\phi|^2d\nu\right) &\text{if $\tilde{\step}<
\infty$},\\\label{slow2} &\frac{\stepsum_n}{\stepsum_n^{(3/2)}}
\nu_n(\cla \phi)\xrightarrow[]{\PP} \tilde{m} &\text{if
$\tilde{\step}= +\infty$},
\end{align}
where
$$\tilde{m}=-\frac{1}{6}\int_G\int_{\RR^m}D^3\phi(x)(\sigma(x)u)^{\otimes 3}
\mu(du)\nu(dx).$$
\end{theorem}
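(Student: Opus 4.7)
The plan is to adapt the martingale-CLT argument of \cite{LambertonPage02} developed for unconstrained Euler schemes, carefully accommodating the reflection terms produced by $\cls$. Writing one Euler step as
\[X_k - X_{k-1} \;=\; \step_k\, b(X_{k-1}) + \sqrt{\step_k}\,\sigma(X_{k-1}) U_k + \Delta\eta_k, \qquad \Delta\eta_k := X_k - Y_k,\]
Taylor-expanding $\phi(X_k)-\phi(X_{k-1})$ around $X_{k-1}$ to second order in case (a) and to third order in case (b), substituting, and telescoping over $k=1,\dots,n$ yields an identity
\[\sum_{k=1}^n \step_k\,\cla\phi(X_{k-1}) \;=\; \phi(X_n) - \phi(X_0) - M_n - J_n - R_n,\]
where $M_n$ is a zero-mean $\clf_k$-martingale with dominant piece $\sum_k \sqrt{\step_k}\,(\sigma(X_{k-1})^T\nabla\phi(X_{k-1})) \cdot U_k$, $J_n$ gathers the linear, quadratic (and cubic in (b)) contributions of $\Delta\eta_k$, and $R_n$ is the higher-order Taylor remainder.

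After dividing by $\sqrt{\stepsum_n}$ the proof rests on three ingredients. The telescoping term $(\phi(X_n)-\phi(X_0))/\sqrt{\stepsum_n}$ tends to $0$ in probability, using the exponential-moment bound on $X_n$ underlying Theorem \ref{thm:unbddmom} together with the sub-exponential growth of $\nabla\phi$. A martingale CLT applied to $M_n/\sqrt{\stepsum_n}$ delivers the Gaussian limit: the predictable quadratic variation equals $\nu_n(|\sigma^T\nabla\phi|^2)$ up to asymptotically negligible pieces, which converges almost surely to $\int_G |\sigma^T\nabla\phi|^2\,d\nu$ by Theorem \ref{thm:unbddmom}, while Condition \ref{expint} supplies the exponential moments required to verify the Lindeberg condition. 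Both $J_n/\sqrt{\stepsum_n}$ and $R_n/\sqrt{\stepsum_n}$ are of order $\stepsum_n^{(3/2)}/\sqrt{\stepsum_n}$: under the fast-step hypothesis (a) this tends to $0$ and gives the centred limit, while under (b) the conditional expectation over $\mu$ of the cubic Taylor term $\tfrac{1}{6}D^3\phi(X_{k-1})(\sqrt{\step_k}\sigma(X_{k-1})U_k)^{\otimes 3}$, summed and suitably normalised, produces the deterministic bias $\tilde{\step}\tilde{m}$; the convergence in \eqref{slow2} follows from the same cubic identification when $\tilde{\step}=+\infty$.

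The main obstacle is the handling of $J_n$: the boundary conditions \eqref{DD2} and \eqref{DD2D3} apply only on the faces $F_i$, whereas $\Delta\eta_k$ is produced during an Euler step whose linear path $s\mapsto X_{k-1}+s(Y_k - X_{k-1})$ may touch $\partial G$ only briefly and for which it is not immediate that the face-wise decomposition can be controlled. Using the definition of $\cls$ and the Lipschitz property of $\Gamma$ (Condition \ref{skolip}), one writes $\Delta\eta_k = \sum_i d_i\,\Delta L^i_k$ with $\Delta L^i_k\ge 0$, and shows that $\{\Delta L^i_k > 0\}$ forces $\mathrm{dist}(X_{k-1},F_i)\le C|Y_k - X_{k-1}|$; the cumulative bound $\sum_i \Delta L^i_k \le C|Y_k-X_{k-1}|$ is obtained by pairing $\Delta\eta_k$ against the function $g$ from Lemma \ref{lemma:g}. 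Since $\langle\nabla\phi,d_i\rangle$, $D^2\phi\,d_i$ (and $D^3\phi\cdot d_i$ in (b)) are Lipschitz and vanish identically on $F_i$, one obtains bounds such as $|\nabla\phi(X_{k-1})\cdot\Delta\eta_k|\le C|Y_k-X_{k-1}|^2$, with analogous gains for the quadratic and cubic reflection contributions. Summing then yields $\EE|J_n|\le C\,\stepsum_n^{(3/2)}$, the precise rate matching $R_n$ and ensuring that the reflection does not spoil the identification of the Gaussian limit in (a) or of the bias $\tilde{m}$ in (b).
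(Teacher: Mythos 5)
Your overall architecture matches the paper: Taylor-expand $\phi(X_k)-\phi(X_{k-1})$, telescope, isolate a dominant martingale $M_n$ whose normalized version gives the Gaussian limit by a martingale CLT with predictable quadratic variation $\nu_n(|\sigma^T\nabla\phi|^2)\to\nu(|\sigma^T\nabla\phi|^2)$, control the pure-diffusion correction terms and the Taylor remainder, and attribute the bias in case (b) to the cubic Taylor contribution of the unconstrained increment. The use of Lemma~\ref{lemma:g} to bound $\sum_i \Delta L^i_k \le C|Y_k - X_{k-1}|$ and the observation that $\{\Delta L^i_k>0\}$ forces $\mathrm{dist}(X_{k-1},F_i)\le C|Y_k-X_{k-1}|$ are exactly the right ingredients. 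This is essentially the paper's route.

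There is, however, a genuine gap in your control of the reflection term $J_n$. The bound you state, $|\nabla\phi(X_{k-1})\cdot\Delta\eta_k|\le C|Y_k-X_{k-1}|^2$, follows from only the first boundary condition in \eqref{DD2} (the Lipschitz vanishing of $\langle\nabla\phi,d_i\rangle$ on $F_i$ gives $|D_i\phi(X_{k-1})|=O(\mathrm{dist}(X_{k-1},F_i))=O(\sqrt{\step_k})$, paired with $\Delta L^i_k=O(\sqrt{\step_k})$). That produces $\EE|J_n|=O(\stepsum_n)$, not $O(\stepsum_n^{(3/2)})$: summing $\step_k(|U_k|+1)^2$ over $k$ gives $\stepsum_n$. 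Dividing by $\sqrt{\stepsum_n}$ then diverges in case (a), and dividing by $\stepsum_n^{(3/2)}$ fails in case (b) as well (since $\stepsum_n\ge\stepsum_n^{(3/2)}$). Your subsequent claim ``Summing then yields $\EE|J_n|\le C\,\stepsum_n^{(3/2)}$'' is therefore not supported by the stated intermediate bound. The fix is the one the paper uses: because \eqref{DD2} imposes \emph{both} $D_i\phi(x)=0$ and $\nabla(D_i\phi)(x)=D^2\phi(x)d_i=0$ for $x\in F_i$, a two-term Taylor expansion of $D_i\phi$ about the point of $F_i$ that the Euler path touches, combined with the Lipschitz continuity of $D^2\phi$, yields the second-order vanishing $|D_i\phi(X_{k-1})|\le C\,\mathrm{dist}(X_{k-1},F_i)^2 = O(\step_k(|U_k|+1)^2)$. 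Multiplying by $\Delta L^i_k = O(\sqrt{\step_k}(|U_k|+1))$ then gives the cubic bound $|\nabla\phi(X_{k-1})\cdot\Delta\eta_k|\le C|Y_k-X_{k-1}|^3$, and hence $\EE|J_n|\le C\stepsum_n^{(3/2)}$. In case (b) the additional third derivative condition in \eqref{DD2D3} gives third-order vanishing of $D_i\phi$ near $F_i$ and the sharper bound $\EE|J_n|\le C\sum_k\step_k^2 = o(\stepsum_n^{(3/2)})$, which is what you need so that the entire bias is attributed to the cubic diffusion term and none to the reflection.
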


Note that when $\step_k=\frac{1}{k^\alpha}$,
${\stepsum_n^{(3/2)}}/{\sqrt{\stepsum_n}}$ converges to 0 [resp.
$\infty$, $\tilde{\step}\in (0,+\infty)$], if $\alpha > 1/2$ [resp.
$\alpha<1/2$, $\alpha=1/2$].  Also note that if $\phi$ is a smooth function supported in the interior of $G$ then it automatically satisfies (\ref{DD2})
and (\ref{DD2D3}).

Proof of Theorem \ref{thm:rate} is quite similar to that of Theorem
9 in \cite{LambertonPage02}, the main difference is in the treatment
of the reflection terms for which once more we appeal to regularity
properties of the Skorohod map and an estimate based on Lemma
\ref{lemma:g} (see proof of \eqref{Lki} which is crucially used in
proofs of Section \ref{ins2107}).

A key step in the implementation of the Euler scheme in \eqref{scheme2012} is the evaluation of the one time step Skorohod map $\cls(x,v)$.  In Section \ref{Sec:eval} we describe one possible approach to this evaluation that uses relationships between Skorohod problems and Linear Complementarity problems(LCPs).
There are many well developed numerical codes for solving LCPs (for example in MATLAB) and we will describe in Section \ref{numerics} some results
from numerical experiments that use a quadratic programming algorithm for LCPs (cf. \cite{CoPaSt09}) in implementing the scheme in \eqref{scheme2012}.  As remarked earlier, one
of the advantages of Monte-Carlo methods is the ease of implementation, particularly for high dimensional problems.  To illustrate this, in Section \ref{numerics} we present numerical results for a eight dimensional Skorohod problem.  

The paper is organized as follows.
In Section \ref{Sec:cgce} we prove  Theorem \ref{thm:cgce} and \ref{thm:unbddmom}. Theorem \ref{thm:cgce} is proved in two steps. Section \ref{sec:tight} shows the tightness of the random measures $\{\nu_n\}$, and Section \ref{sec:limit} characterizes the limit of the measures $\{\nu_n\}$ as the invariant measure of the constrained diffusion in \eqref{consdf}. Section \ref{sec:thmunbdd} gives the proof of Theorem \ref{thm:unbddmom}. Rate of convergence theorem (Theorem \ref{thm:rate}) is proved in Section \ref{ins2107}. Finally we conclude  by describing some numerical results in Section \ref{Sec:sim}.

\section{Proofs of Theorems \ref{thm:cgce} and \ref{thm:unbddmom}}\label{Sec:cgce}

The proof of Theorem \ref{thm:cgce} proceeds by showing that for a.e. $\omega$, the sequence
of random probability measures $\{\nu_n(\omega)\}_{n\ge 1}$ is tight
and then characterizing the limit points of the sequence using a
generalization of  Echeverria's criteria. Tightness is
argued in Section \ref{sec:tight} while the limit points are
characterized in Section \ref{sec:limit}. Finally in Section \ref{sec:thmunbdd}, we give the proof of Theorem \ref{thm:unbddmom}.

\subsection{Tightness} \label{sec:tight}
We begin by presenting a
Lyapunov function introduced in \cite{AtarBudhirajaDupuis01} that
plays a key role in the stability analysis of constrained diffusion
processes of the form studied here (see \cite{AtarBudhirajaDupuis01,
BudhirajaLee07, BudhirajaLee09, BudhirajaLiu102, Budhiraja03,
BudhirajaBorkar04, BudhirajaBiswas}).

Throughout this work we will fix a $\delta >0$ as in Condition \ref{cone}.

For $x \in G$, let $\cla(x)$ be the collection of all absolutely
continuous functions $z:[0,\infty) \rightarrow \RR^m$ defined via
\be z(t) \doteq \Gamma\left(x +\int_0^\cdot v(s)ds\right)(t), \quad
t \in[0,\infty),\ee for some $v:[0,\infty) \rightarrow \clc(\delta)$
which satisfies \be \int_0^t |v(s)|ds < \infty, \quad \text{for all
} t \in [0,\infty). \ee

Define $T: G \rightarrow [0,\infty)$ by the relation \be \label{Tx}
T(x) \doteq \sup_{z \in \cla(x)} \inf\{t \in [0,\infty): z(t)=0\},
\quad x \in G.\ee The function $T$ has the following properties (see
\cite{AtarBudhirajaDupuis01}).
\begin{lemma}\label{propT}
    There exist constants $c,C \in (0,\infty)$ such that the following hold:
    \begin{enumerate}[(i)]
        \item For all $x,y \in G$, $$|T(x)-T(y)|\le C|x-y|.$$
        \item For all $x\in G$, $T(x)\ge c|x|$. Thus, in particular, for all $M \in (0,\infty)$ the set $\{x\in G: T(x) \le M\}$ is compact.
        \item Fix $x\in G$ and let $z \in \cla(x)$. Then for all $t>0$,
        $$T(z(t))\le (T(x)-t)^+.$$
    \end{enumerate}
\end{lemma}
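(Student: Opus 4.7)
My approach rests on three ingredients: the Lipschitz continuity of $\Gamma$ from Condition \ref{skolip}, the restart/concatenation property of the Skorokhod map, and the fact that $\clc(\delta)$ is bounded away from $0\in \partial \clc$. I would first handle (iii) as a dynamic-programming identity, then prove the lower bound in (ii) directly from Lipschitz of $\Gamma$, and finally establish (i) by coupling paths. The main obstacle is an auxiliary upper bound $T(w)\le C_1|w|$, which I isolate and use as input to (i).

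\textbf{Property (iii) and lower bound in (ii).} For (iii), fix $z\in\cla(x)$ with drift $v$ and $t>0$; for any $\tilde z\in\cla(z(t))$ with drift $\tilde v$, the concatenated drift $\hat v(s)=v(s)I_{\{s\le t\}}+\tilde v(s-t)I_{\{s>t\}}$ takes values in $\clc(\delta)$ and is locally integrable. By causality and the restart property of $\Gamma$, the trajectory $\hat z=\Gamma(x+\int_0^\cdot \hat v(r)\,dr)$ coincides with $z$ on $[0,t]$ and with $\tilde z(\cdot-t)$ on $[t,\infty)$, hence $\hat z\in\cla(x)$. If $\tilde z(s)=0$ then $\hat z(t+s)=0$, so $T(x)\ge t+\inf\{s:\tilde z(s)=0\}$; taking the supremum over $\tilde z$ yields $T(z(t))\le T(x)-t$. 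For the lower bound in (ii), since $\clc$ is a proper cone with $0\in\partial\clc$, every $v\in\clc(\delta)$ satisfies $|v|\ge\delta$. Applying Condition \ref{skolip} with $\phi_1(s)=x+\int_0^s v(r)\,dr$ and $\phi_2(s)\equiv x\in G$ (so $\Gamma(\phi_2)=\phi_2$) gives $|z(t)-x|\le K\int_0^t|v(r)|\,dr$. Choosing $v\equiv v_0$ with $|v_0|=\delta$, any hitting time $t^*$ satisfies $|x|\le K\delta t^*$, and so $T(x)\ge |x|/(K\delta)=:c|x|$. Compactness of $\{T\le M\}$ follows from this linear lower growth combined with the continuity provided by (i).

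\textbf{Property (i): main obstacle.} I first isolate the auxiliary upper bound $T(w)\le C_1|w|$. This is where the geometry of $\clc(\delta)$ together with Condition \ref{completeS} enters: every $v\in\clc(\delta)$ has a component in the interior of the cone of admissible toward-origin directions, giving each path in $\cla(w)$ a uniform contractive pull. In the orthant case $G=\RR_+^m$ with $d_i=e_i$, each coordinate of $z$ decreases at rate at least $\delta/\sqrt{m}$ while positive, yielding $T(w)\le|w|\sqrt{m}/\delta$; the general polyhedral case uses a test function in the spirit of Lemma \ref{lemma:g} together with the recession-cone structure (cf.\ \cite{AtarBudhirajaDupuis01}). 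Granting this, (i) follows by coupling. Given $\varepsilon>0$, pick $z^x\in\cla(x)$ with drift $v$ and hitting time $t^x_*\ge T(x)-\varepsilon$, and let $z^y\in\cla(y)$ share the drift $v$. Condition \ref{skolip} gives $\sup_{s\ge 0}|z^x(s)-z^y(s)|\le K|x-y|$, so at $t^y_*:=\inf\{s:z^y(s)=0\}\le T(y)$ we have $|z^x(t^y_*)|\le K|x-y|$. The shifted path $z^x(t^y_*+\cdot)\in\cla(z^x(t^y_*))$ hits $0$ at time $t^x_*-t^y_*$, so by definition of $T$ as a supremum and the auxiliary upper bound,
$$t^x_*-t^y_*\le T(z^x(t^y_*))\le C_1|z^x(t^y_*)|\le C_1 K|x-y|.$$
Combining with $t^y_*\le T(y)$ and $t^x_*\ge T(x)-\varepsilon$, sending $\varepsilon\to 0$, and invoking symmetry in $x,y$ delivers $|T(x)-T(y)|\le C_1 K|x-y|$.
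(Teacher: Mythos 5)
The paper does not prove Lemma \ref{propT} at all — it is quoted verbatim from \cite{AtarBudhirajaDupuis01} — so there is no in-paper argument to compare against, and your proposal must be judged on its own. Its overall architecture (concatenation for (iii), Lipschitz of $\Gamma$ for the lower bound in (ii), coupling driven by a linear upper bound $T(w)\le C_1|w|$ for (i)) is the right one and matches what is done in the cited reference, but two steps do not hold up as written.

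The derivation of (iii) contains a logical error. From ``$\tilde z(s_0)=0\Rightarrow\hat z(t+s_0)=0$'' you cannot conclude $T(x)\ge t+s_0$: $T(x)$ is a supremum of \emph{first} hitting times, and if $z$ touches the origin somewhere on $[0,t]$ then $\hat z$ does too, so $\inf\{u:\hat z(u)=0\}<t+s_0$ and the bound $T(x)\ge\inf\{u:\hat z(u)=0\}$ gives nothing. Indeed your conclusion $T(z(t))\le T(x)-t$ is stated \emph{without} the positive part, which would force $T(z(t))<0$ whenever $t>T(x)$ — so the argument, as written, proves a false statement. The fix requires the absorption-at-the-vertex property (a constrained trajectory with drift in $\clc(\delta)$ that reaches $0$ stays at $0$, which follows from uniqueness of the SP because $\phi\equiv 0$ solves it): if $z$ hits $0$ on $[0,t]$ then $z(t)=0$, $T(z(t))=0\le(T(x)-t)^+$, and you are done; otherwise $\hat z$ genuinely first hits $0$ at $t+s_0$ and your chain of inequalities is valid. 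Secondly, the auxiliary bound $T(w)\le C_1|w|$ — which you correctly identify as the crux and which drives the coupling proof of (i) — is only argued heuristically for the orthant with $d_i=e_i$ and otherwise deferred to the literature. That bound (that every $\clc(\delta)$-driven constrained path reaches the vertex in time linear in the initial condition) is precisely the substantive result of \cite{AtarBudhirajaDupuis01}; it is not a routine consequence of Condition \ref{skolip} or Lemma \ref{lemma:g}, so the proposal as a whole does not stand on its own. Two minor points: in (ii) the choice $|v_0|=\delta$ may not be realizable (for a narrow cone the minimal-norm element of $\clc(\delta)$ has norm strictly larger than $\delta$), but any fixed $v_0\in\clc(\delta)$ gives $c=1/(K|v_0|)$; and in (i) one should note the trivial case $t^x_*\le t^y_*$ before invoking the shifted-path estimate.
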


We next present an elementary lemma that will be used in obtaining moment estimates. For $k\in \NN$, let
$\clf_k=\sigma (U_1, ..., U_k)$. Set $\clf_0=\{\emptyset, \Omega\}$.
\begin{lemma}\label{exp}
There exist $c_1, c_2 \in (1, \infty)$ for which the following
holds. Let $\{v_i\}_{i \in \NN}$ be a sequence of $\RR^m$ valued
random variables such that $v_i$ is $\clf_{i-1}$ measurable for all
$i \ge 1$ and \bes \operatorname*{ess\,sup}_{\omega}|v_i(\omega)|
\equiv |v_i|_\infty < \infty.\ees Let $S_n = \sum_{i=1}^n v_i \cdot U_i$,
$n \in \NN$.  Then for every $r \ge 0$ and $n \ge 1$, \bes \EE
\max_{1 \le i \le n} e^{r |S_i|} \le c_1 e^{c_2r^2\sum_{i=1}^n
|v_i|_\infty^2}. \ees
\end{lemma}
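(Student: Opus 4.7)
The plan is to exploit the sub-Gaussian structure of the $U_{k,j}$'s given by Condition \ref{expint}, together with the predictable nature of the $v_i$'s, to identify $\{e^{r S_n}\}$ and $\{e^{-r S_n}\}$ as positive submartingales with controlled $L^p$ norms, and then apply Doob's maximal inequality.

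First, I would verify the one-step estimate: conditioning on $\clf_{i-1}$, the vector $v_i$ is known, and $U_i$ is independent of $\clf_{i-1}$ with mutually independent coordinates. Writing $v_i \cdot U_i = \sum_{j=1}^m v_{i,j} U_{i,j}$ and using the factorization together with Condition \ref{expint}, one obtains
\bes
\EE\bigl[e^{r v_i \cdot U_i}\mid \clf_{i-1}\bigr] \le \prod_{j=1}^m e^{\alpha r^2 v_{i,j}^2} = e^{\alpha r^2 |v_i|^2} \le e^{\alpha r^2 |v_i|_\infty^2},
\ees
uniformly in $r\in\RR$. Iterating the tower property yields the closed-form bound
\bes
\EE e^{\pm r S_n} \le e^{\alpha r^2 \sum_{i=1}^n |v_i|_\infty^2}
\ees
for every $n$ and every real $r$.

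Next I would observe that the one-step estimate implies $\EE[e^{\pm r S_n}\mid \clf_{n-1}] \ge e^{\pm r S_{n-1}}$ by Jensen (or simply because $e^{\pm r v_n \cdot U_n}$ has conditional mean at least $1$ since $\EE U_n = 0$), so $\{e^{\pm r S_n}\}$ are nonnegative submartingales. Applying Doob's $L^2$ maximal inequality (with exponent $2r$ in place of $r$ in the estimate above) gives
\bes
\EE\Bigl[\max_{1\le i \le n} e^{2 r S_i}\Bigr] \le 4\,\EE[e^{2 r S_n}] \le 4\,e^{4\alpha r^2 \sum_{i=1}^n |v_i|_\infty^2},
\ees
and Cauchy--Schwarz (applied to $\max_i e^{rS_i} = (\max_i e^{2rS_i})^{1/2}$) then produces
\bes
\EE\Bigl[\max_{1\le i \le n} e^{r S_i}\Bigr] \le 2\,e^{2\alpha r^2 \sum_{i=1}^n |v_i|_\infty^2}.
\ees
The same reasoning applied to $-r$ yields an identical bound for $\max_i e^{-rS_i}$.

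Finally, since $|S_i|$ is scalar, the elementary inequality $e^{r|S_i|}\le e^{rS_i} + e^{-rS_i}$ lets me combine the two estimates via the union bound on the maximum, giving
\bes
\EE\Bigl[\max_{1\le i \le n} e^{r|S_i|}\Bigr] \le 4\,e^{2\alpha r^2 \sum_{i=1}^n |v_i|_\infty^2},
\ees
so the lemma holds with $c_1 = 4$ and $c_2 = 2\alpha$. There is no genuine obstacle here; the only subtlety is recognizing that we need a maximal inequality (so that the estimate controls the full trajectory $i\le n$ and not just $i = n$), which is why Doob enters. The sub-Gaussian Condition \ref{expint} does the rest of the work.
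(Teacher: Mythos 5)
Your proof is correct and uses essentially the same ingredients as the paper's argument: iterating Condition \ref{expint} via the tower property to bound $\EE\, e^{\pm r S_n}$, applying Doob's $L^2$ maximal inequality to a suitable exponential submartingale, and splitting $e^{r|x|}\le e^{rx}+e^{-rx}$. You merely reorder the steps (split first, then apply Doob separately to each of $e^{\pm rS_i}$, via the exponent-$2r$ detour and Jensen), which costs you a factor of $2$ in $c_2$ relative to the paper's route of applying Doob directly to $e^{r|S_i|/2}$, but this is immaterial since the lemma only asserts existence of suitable constants.
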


\begin{proof}
We will only give the proof for the case $m=1$. The general case is
treated similarly.

From Doob's maximal inequalities for submartingales, we have \bes
\begin{split} \EE \max_{1 \le i \le n}
e^{r |S_i|} &\le 4\EE e^{r |S_n|}\\
&\le 4\left(\EE e^{r S_n}+\EE e^{-r S_n}\right)\end{split}\ees

From Condition \ref{expint},  it follows that for every $r \in \RR$,
\bes
\begin{split} \EE \left(
e^{r S_n}|\clf_{n-1}\right) &\le e^{r S_{n-1}} e^{\alpha r^2 v_n^2}\\
&\le e^{r S_{n-1}} e^{\alpha r^2 |v_n|_\infty^2}\end{split}\ees The
result now follows by a successive conditioning argument.
\end{proof}

Define $\step: [0,\infty) \rightarrow
[0,\infty)$ and $j:[0,\infty) \rightarrow \NN_0$ as
\[\step(s)=\stepsum_k;\;\;\; j(s) = k, \mbox{ if } \stepsum_k \le s <
\stepsum_{k+1}, \; k \in\NN_0;\]
 where we define $\stepsum_0=0$.
Define piecewise linear $\RR^m$ valued stochastic process as
follows, \bes \hat{W}(t)=\sum_{i \le j(t)}\sqrt{\step_i}U_i +
\frac{t-\step(t)}{\sqrt{\step_{j(t)+1}}}U_{j(t)+1}, \quad t \ge
0.\ees Let $\hat{X}(t)$ be the solution of the following integral
equation
$$\hat{X}(t)=\Gamma\left(x_0 + \int_0^\cdot b(\hat{X}(\step(s)))ds +\int_0^\cdot
 \sigma(\hat{X}(\step(s)))d\hat{W}(s) \right)(t), \quad t \ge 0.$$
Clearly, $\hat{X}(\step(t))= X_{j(t)}$ for all $t \ge 0$.

Fix $\rho \in (0,1]$. Define
\be
\label{ins204}\varpi= \frac{1}{2(1+\rho)L},
\quad\Delta =4\step_0 + 16L \ln(c_1),\ee where $L = c_2 a_1^2C^2K^2$
and $\step_0 = \sup_{i \ge 1} \step_i$. Let $V: G \rightarrow
\RR_+$ be defined as \be \label{V} V(x)= e^{\varpi T(x)}, \quad x\in
G.\ee


\begin{lemma}\label{ineq}
There exist $\beta\in (0,1)$ and  $\phi\in [0,\infty)$ such that for
each $\zeta \in [0,\rho]$ and for all $t\ge0$,
    \be \label{ineq1}\EE(V^{1+\zeta}({X}_{j(t+\Delta)})|\clf_{j(t)})\le (1-\beta)V^{1+\zeta}({X}_{j(t)})+\phi \ee
\end{lemma}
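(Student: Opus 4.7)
The plan is to compare the Euler trajectory over the time interval $[\stepsum_{n_1}, \stepsum_{n_2}]$, where $n_1 := j(t)$ and $n_2 := j(t+\Delta)$ are deterministic indices, to a noise-free Skorokhod path whose value of $T$ decays linearly by Lemma \ref{propT}(iii), and to control the stochastic perturbation through Lemma \ref{exp}. Throughout, set $r := \varpi(1+\zeta)$, so that $V^{1+\zeta}(x) = e^{rT(x)}$; the hypothesis $\zeta \in [0,\rho]$ translates into $r \in [\varpi, 1/(2L)]$, and in particular $Lr \le 1/2$ and $r \ge 1/(4L)$ hold uniformly in $\zeta$.

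Let $\tau := \stepsum_{n_2} - \stepsum_{n_1}$, so that $\Delta - \step_0 \le \tau \le \Delta + \step_0$. By the time-shift property of the Skorokhod problem, on $[0,\tau]$ the shifted process $s \mapsto \hat X(\stepsum_{n_1}+s)$ equals $\Gamma$ applied to the input starting from $X_{n_1}$ and driven by the corresponding shifted drift and stochastic pieces. Let $\tilde z$ denote the path obtained by dropping the stochastic piece in that input. Since Condition \ref{cone} puts the drift pointwise in $\clc(\delta)$, one has $\tilde z \in \cla(X_{n_1})$, and Lemma \ref{propT}(iii) yields $T(\tilde z(\tau)) \le a := (T(X_{n_1}) - (\Delta - \step_0))^+$. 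Combining the Lipschitz property of $\Gamma$ (Condition \ref{skolip}) with that of $T$ (Lemma \ref{propT}(i)),
\begin{equation*}
T(X_{n_2}) \le a + CK \max_{0 \le k \le n_2 - n_1} |S_k|, \qquad S_k := \sum_{i=1}^{k} \sigma(X_{n_1+i-1})\sqrt{\step_{n_1+i}}\,U_{n_1+i}.
\end{equation*}
The continuous supremum of the shifted stochastic integral is replaced by $\max_k |S_k|$ because the integrand is piecewise constant against the piecewise linear $\hat W$, so the integral is piecewise affine and its norm, being convex on each piece, attains its supremum at grid points.

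Since the coefficients $\sigma(X_{n_1+i-1})\sqrt{\step_{n_1+i}}$ are $\clf_{n_1+i-1}$-measurable and bounded by $a_1\sqrt{\step_{n_1+i}}$ (Condition \ref{coeff}), the conditional form of Lemma \ref{exp} (which its proof delivers verbatim by successive conditioning) gives
\begin{equation*}
\EE\!\left[e^{rCK\max_k|S_k|}\,\big|\,\clf_{n_1}\right] \le c_1 \exp\!\left(c_2(rCK)^2 a_1^2 \tau\right) = c_1 e^{Lr^2\tau},
\end{equation*}
and therefore $\EE[V^{1+\zeta}(X_{n_2})\mid \clf_{n_1}] \le c_1 e^{ra + Lr^2\tau}$. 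On $\{T(X_{n_1}) \ge \Delta - \step_0\}$ one has $e^{ra} = V^{1+\zeta}(X_{n_1})\,e^{-r(\Delta - \step_0)}$; the estimates $Lr \le 1/2$ and $\tau \le \Delta + \step_0$ bound the residual exponent by $-r(\Delta - 3\step_0)/2$, and plugging in the specific $\Delta = 4\step_0 + 16L\ln c_1$ and $r \ge 1/(4L)$ yields $c_1 \exp(-r(\Delta - \step_0) + Lr^2\tau) \le e^{-\step_0/(8L)}/c_1 =: 1 - \beta \in (0,1)$, uniformly in $\zeta \in [0,\rho]$. On the complementary event, $a = 0$, and both $V^{1+\zeta}(X_{n_1})$ and the conditional expectation are bounded by a deterministic constant depending only on $\rho, L, \step_0, c_1, \Delta$; choosing $\phi$ large enough to absorb it yields \eqref{ineq1}. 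The main obstacle is precisely this last bookkeeping step: the specific choice of $\Delta$ and $\varpi$ in \eqref{ins204} is calibrated to ensure that $\beta$ is strictly positive and uniform over $\zeta \in [0,\rho]$.
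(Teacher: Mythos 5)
Your proof is correct and follows essentially the same route as the paper: compare $\hat X$ over the deterministic time window $[\stepsum_{j(t)},\stepsum_{j(t+\Delta)}]$ to a drift-only Skorokhod path in $\cla(X_{j(t)})$, use Lemma \ref{propT}(i),(iii) together with the Lipschitz property of $\Gamma$ to reduce $T(X_{j(t+\Delta)})$ to $(T(X_{j(t)})-(\Delta-\step_0))^+$ plus $CK$ times a maximum of partial sums, apply the conditional form of Lemma \ref{exp}, and then split on whether $T(X_{j(t)})$ exceeds $\Delta-\step_0$. Your cosmetic improvements — the convexity argument for replacing the continuous supremum by a grid maximum, and the uniform-in-$\zeta$ bound $1-\beta = e^{-\step_0/(8L)}/c_1$ obtained from $Lr\le 1/2$ and $r\ge 1/(4L)$ — are sound and match the spirit of the paper's calibration of $\varpi$ and $\Delta$ in \eqref{ins204}.
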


\begin{proof}
    Fix $t \ge 0$ and $\zeta \in [0,\rho]$. Define $\xi:[\step(t), \infty) \rightarrow G$ as
    $$\xi(s) = \Gamma\left({X}_{j(t)} + \int_{\step(t)}^{\step(t)+\cdot} b({X}_{j(u)})du\right)(s - \step(t)), \quad s \ge \step(t).$$
    Using the Lipschitz property of the Skorokhod map (Condition \ref{skolip}), we have
    \bes \begin{split}
    \sup_{\step(t)\le s\le \step(t)+\Delta +\step_0} |\hat{X}(s)-\xi(s)| \le &
    K\sup_{\step(t)\le s\le \step(t)+\Delta +\step_0}  \left|\int_{\step(t)}^s \sigma(\hat{X}(\step(u)))d\hat{W}(u)\right|\\
    =: & K\bar{\nu}(t,\Delta).
\end{split}\ees
Note that $$\Delta -\step_0 \le \step(t+\Delta)-\step(t) \le \Delta
+\step_0.$$ Using this observation along with Lemma \ref{propT} (i) and (iii), \bes
\begin{split} T(\hat{X}(\step(t+\Delta))) \le
& T(\xi(\step(t+\Delta))) + CK\bar{\nu}(t,\Delta)\\
\le & (T(\hat{X}(\step(t)))-(\step(t+\Delta)-\step(t)))^+ + CK\bar{\nu}(t,\Delta)\\
\le & (T(\hat{X}(\step(t)))-(\Delta -\step_0))^+ +
CK\bar{\nu}(t,\Delta).
\end{split} \ees
From the above estimate and the definition of $V(x)$, we now have
\begin{align}
\frac{\EE(V(\hat{X}(\step(t+\Delta)))^{1+\zeta}|\clf_{j(t)})}{V(\hat{X}(\step(t)))^{1+\zeta}}
 &\le  \EE\left(\exp(\varpi(1+\zeta)\left((T(\hat{X}(\step(t)))-(\Delta -\step_0))^+ +CK\bar{\nu}(t,\Delta))\right)\left|\clf_{j(t)}\right.\right)\notag\\
& \times \exp(-\varpi(1+\zeta))T(\hat{X}(\step(t))).
 \label{ins207}\end{align}

Letting, for $q \in \NN_0$, $\sigma_q = \sigma(X_q)$, we have, for
any $s \in [\step(t), \step(t)+ \Delta + \step_0]$, \bes
\int_{\step(t)}^s \sigma(\hat{X}(\step(u)))d\hat{W}(u) \le
\begin{cases}
\sum_{q=j(t)}^{j(s)}\sigma_q \sqrt{\step_{q+1}}U_{q+1},& \text{if }
\sigma_{j(s)}U_{j(s)}\ge 0\\ \ \\
\sum_{q=j(t)}^{j(s)-1}\sigma_q \sqrt{\step_{q+1}}U_{q+1},& \text{if
}\sigma_{j(s)}U_{j(s)}< 0\\
\end{cases},
\ees which can be bounded by
$$ \max_{j(t) \le j \le j(s)}\sum_{q=j(t)}^{j}\sigma_q
\sqrt{\step_{q+1}}U_{q+1}.$$ Similarly, \bes -\int_{\step(t)}^s
\sigma(\hat{X}(\step(u)))d\hat{W}(u) \le \max_{j(t) \le j \le
j(s)}-\sum_{q=j(t)}^{j}\sigma_q \sqrt{\step_{q+1}}U_{q+1}. \ees And
therefore
\bes \bar{\nu}(t,\Delta) = \sup_{\step(t)\le s\le \step(t)+\Delta +\step_0}
\left|\int_{\step(t)}^s \sigma(\hat{X}(\step(u)))d\hat{W}(u)\right|
\le \max_{j(t) \le j \le j^*_t}\left|\sum_{q=j(t)}^{j}\sigma_q
\sqrt{\step_{q+1}}U_{q+1}\right|, \ees where
$j^*_t=j(\step(t)+\Delta +\step_0)$.

Using Lemma \ref{exp}, we now have that, with $m_0 = \varpi
(1+\zeta)CK$,
\begin{equation} \label{ab214}
\EE\left[e^{m_0\bar{\nu}(t,\Delta)}\left|\clf_{j(t)}\right.\right] \le c_1 e^{c_2m_0^2 a_1^2 \sum_{q=j(t)}^{j^*_t}\step_{q+1}}
\le c_1 e^{c_2m_0^2 a_1^2 (\Delta + 2 \step_0)}.\end{equation}


In the case $T(\hat{X}(\step(t))) \ge \Delta -\step_0$, we have from (\ref{ins207}) and (\ref{ab214}) that
\bes
\EE(V(\hat{X}(\step(t+\Delta)))^{1+\zeta}|\clf_{j(t)}) \le
V(\hat{X}(\step(t)))^{1+\zeta}e^{-\varpi(1+\zeta)(\Delta
-\step_0)}\times c_1 e^{c_2m_0^2 a_1^2 (\Delta + 2 \step_0)}. \ees
Recalling the choice of $\varpi$ and $\Delta$, we now see that \bes
\EE(V(\hat{X}(\step(t+\Delta)))^{1+\zeta}|\clf_{j(t)}) \le
(1-\beta)V(\hat{X}(\step(t)))^{1+\zeta}, \ees where
$\beta=1-e^{-3\ln c_1}$.

In the case $T(\hat{X}(\step(t))) < \Delta -\step_0$, we have
\[
\EE(V(\hat{X}(\step(t+\Delta)))^{1+\zeta}|\clf_{j(t)}) \le
\EE\left(e^{m_0\bar{\nu}(t,\Delta)}|\clf_{j(t)}\right) \le
c_1 e^{c_2m_0^2 a_1^2 (\Delta + 2 \step_0)} \le
c_1 e^{\frac{1}{4L}(\Delta + 2 \step_0)} \equiv \phi .\]

Combining the two cases, we have \eqref{ineq1}.
\end{proof}

The following lemma follows from Lemma \ref{ineq} through a
recursive argument.

\begin{lemma}\label{Lemma:bnd} There exists $a_2 \in (0,\infty)$ such that
    \be \label{bndrho}\sup_t \EE(V(\hat{X}(\step(t)))^{1+\rho}) \le a_2. \ee
\end{lemma}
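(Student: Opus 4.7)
The plan is to use Lemma \ref{ineq} with $\zeta=\rho$ as a one-step contraction at intervals of length $\Delta$, iterate it to cover the entire positive half-line, and separately control the behavior on the initial transient interval $[0,\Delta]$.

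First I would take expectations (via the tower property) in the conclusion of Lemma \ref{ineq} with $\zeta=\rho$ to obtain the unconditional drift inequality
\[
\EE V^{1+\rho}(X_{j(t+\Delta)}) \le (1-\beta)\,\EE V^{1+\rho}(X_{j(t)}) + \phi, \quad t\ge 0.
\]
For arbitrary $s\ge 0$ I would decompose $s = t_0 + n\Delta$ with $n=\lfloor s/\Delta\rfloor \in \NN_0$ and $t_0 \in [0,\Delta)$, and iterate the above inequality $n$ times from the base point $t_0$, giving
\[
\EE V^{1+\rho}(X_{j(s)}) \le (1-\beta)^n \EE V^{1+\rho}(X_{j(t_0)}) + \phi\sum_{k=0}^{n-1}(1-\beta)^k \le \EE V^{1+\rho}(X_{j(t_0)}) + \frac{\phi}{\beta}.
\]
It therefore suffices to show that $M\doteq \sup_{t_0 \in [0,\Delta]} \EE V^{1+\rho}(X_{j(t_0)}) < \infty$, after which $a_2 := M + \phi/\beta$ completes the proof.

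To bound $M$, I would exploit the fact that $\stepsum_n \to \infty$ to conclude that $N_\Delta \doteq \max\{k : \stepsum_k \le \Delta\}$ is finite, so that $j(t_0)\le N_\Delta$ for all $t_0\in[0,\Delta]$ and one only needs to bound $\EE V^{1+\rho}(X_k)$ for $k=0,1,\dots,N_\Delta$. Using Lemma \ref{propT}(i) one has $T(x)\le T(0)+C|x|$, hence $V^{1+\rho}(x)\le e^{(1+\rho)\varpi T(0)} e^{(1+\rho)\varpi C|x|}$, so I only need uniform exponential moment bounds for $|X_k|$ on $k\le N_\Delta$. Applying Condition \ref{skolip} to the one-step recursion \eqref{scheme2012} (comparing $\Gamma$ of the ramp path against $\Gamma$ of the constant path $X_{k-1}$) together with the bound $|b|, |\sigma|\le a_1$ from Condition \ref{coeff} yields
\[
|X_k| \le |x_0| + Ka_1 \stepsum_{N_\Delta} + K a_1 \sum_{i=1}^{N_\Delta} \sqrt{\step_i}\,|U_i|,
\]
and the required exponential moment is finite by Condition \ref{expint} (applied coordinatewise to $\pm U_i$, using $|U_i|\le \sum_j (U_{i,j})_++(U_{i,j})_-$ or a simple union bound, since the number of summands is fixed and finite).

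The main obstacle is nothing conceptual but rather the bookkeeping for the initial transient: one must verify that even though there is no lower bound on the step sizes $\step_k$, the total number of steps fitting into $[0,\Delta]$ is finite by the second part of \eqref{step}, and that the exponential moment estimate for this finite sum of weighted $U_i$'s is uniform in the choice of $t_0\in[0,\Delta]$. Once those routine bounds are assembled the recursive contraction at scale $\Delta$ immediately delivers \eqref{bndrho}.
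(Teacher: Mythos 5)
Your proposal is correct and follows essentially the same approach as the paper: iterate the drift inequality of Lemma \ref{ineq} (with $\zeta=\rho$) at scale $\Delta$ to reduce the claim to finiteness of $\sup_{t_0\in[0,\Delta]}\EE V^{1+\rho}(X_{j(t_0)})$, which is a finite maximum over $k\le N_\Delta$ and is handled via Condition \ref{expint}, boundedness of $b,\sigma$, and the Lipschitz property of the Skorohod map. You merely spell out more explicitly the exponential-moment estimate on the initial finite stretch, which the paper leaves terse.
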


\begin{proof}

For any $t \in (\Delta, \infty)$, we can find $t'\in (0,\Delta]$ and
$j \in \NN$ such that $t=t'+j \Delta$. By a recursive argument using
\eqref{ineq1}, we then have
$$\EE(V(\hat{X}(\step(t)))^{1+\rho})\le \EE(V(\hat{X}(\step(t')))^{1+\rho})+ \frac{\phi}{\beta}.$$
Thus \bes \sup_t\EE(V(\hat{X}(\step(t)))^{1+\rho}) \le \sup_{0 \le t
<\Delta}\EE(V(\hat{X}(\step(t)))^{1+\rho})+\frac{\phi}{\beta}. \ees
The supremum on the right side is bounded by $\max_{j \le
j(\Delta+\step_0)}\EE(V({X}_j)^{1+\rho})$, which is finite using
Condition \ref{expint}, boundedness of $b, \sigma$ and the Lipschitz property of $\Gamma$.
\end{proof}

Now we can prove the following lemma.

\begin{lemma}\label{lemma:tight}
For a.e. $\omega$, $\sup_n \langle V, \nu_n(\omega) \rangle <
\infty$. Consequently, the sequence $\{\nu_n (\omega)\}_{n\ge 1}$ is
tight for a.e. $\omega$.
\end{lemma}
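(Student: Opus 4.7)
My plan has two parts: (i) prove the almost sure bound $\sup_n \langle V, \nu_n \rangle < \infty$; (ii) deduce tightness as an immediate consequence. Part (ii) is easy: by Lemma \ref{propT}(ii), $V(x) = e^{\varpi T(x)} \ge e^{\varpi c|x|}$, so Markov's inequality gives $\nu_n(\{x\in G: |x| > R\}) \le e^{-\varpi c R}\,\nu_n(V)$, and on the a.s. event where $\sup_n \nu_n(V) < \infty$ the right-hand side can be made arbitrarily small uniformly in $n$ by taking $R$ large. Since $\{x \in G: |x| \le R\}$ is compact for each $R$, this gives tightness of $\{\nu_n(\omega)\}$.

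For part (i), I would apply the Foster-Lyapunov drift of Lemma \ref{ineq} (with $\zeta = 0$) along the deterministic subsequence $\tau_N := j(N\Delta)$, $N \in \NN_0$, to obtain $\EE[V(X_{\tau_{N+1}})|\mathcal{F}_{\tau_N}] \le (1-\beta)V(X_{\tau_N}) + \phi$. Setting $\eta_{N+1} := V(X_{\tau_{N+1}}) - \EE[V(X_{\tau_{N+1}})|\mathcal{F}_{\tau_N}]$, summing from $N'=0$ to $N-1$ and using $V \ge 0$ yields $\beta \sum_{N'=0}^{N-1} V(X_{\tau_{N'}}) \le V(X_0) + N\phi + M_N$ with $M_N := \sum_{N'=1}^N \eta_{N'}$ a square-integrable martingale. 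Applying Lemma \ref{Lemma:bnd} with $\zeta = \rho = 1$ yields $\sup_{N'} \EE \eta_{N'}^2 \le a_2 < \infty$, whence $\sum_{N'} \EE \eta_{N'}^2 / (N')^2 < \infty$ and Chow's $L^2$-martingale strong law gives $M_N/N \to 0$ a.s. Therefore $\limsup_N N^{-1}\sum_{N'=0}^{N-1}V(X_{\tau_{N'}}) \le \phi/\beta$ a.s.

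To convert this block-level estimate into an a.s. bound on $\nu_n(V)$ for arbitrary $n$, I would decompose $\sum_{k=1}^n \lambda_k V(X_{k-1})$ across blocks $(\tau_{N'},\tau_{N'+1}]$ and bound each block using the intra-block estimate implicit in the proof of Lemma \ref{ineq}. Specifically, the Lipschitz property of $T$ from Lemma \ref{propT}(i) combined with Condition \ref{skolip} yields $V(X_{k-1}) \le V(X_{\tau_{N'}})Z_{N'}$ for every $k \in (\tau_{N'},\tau_{N'+1}]$, where $Z_{N'} := \exp(\varpi CK\,\bar{\nu}(N'\Delta,\Delta))$ and, by \eqref{ab214} applied with $m_0 = 2\varpi CK$ (finite thanks to $\varpi = 1/[2(1+\rho)L]$ with $\rho = 1$), $\EE[Z_{N'}^2|\mathcal{F}_{\tau_{N'}}] \le C_*$ for a deterministic constant. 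Using also $\sum_{k=\tau_{N'}+1}^{\tau_{N'+1}}\lambda_k \le \Delta + \lambda_0$ and $\Lambda_{\tau_N} \ge N(\Delta - \lambda_0)$ for $N$ large, one obtains $\nu_n(V) \le \frac{\Delta + \lambda_0}{N(\Delta - \lambda_0)}\sum_{N'=0}^{N-1} V(X_{\tau_{N'}})Z_{N'}$. A second $L^2$-martingale SLLN applied to the centered sum $\sum_{N'=0}^{N-1}V(X_{\tau_{N'}})(Z_{N'} - \EE[Z_{N'}|\mathcal{F}_{\tau_{N'}}])$, using Cauchy-Schwarz together with the uniform bounds on $\EE V(X_{\tau_{N'}})^2$ and $\EE Z_{N'}^2$, combined with the a.s. bound already obtained on $N^{-1}\sum V(X_{\tau_{N'}})$, yields the required uniform bound.

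The main obstacle is the intra-block control: one needs not only to bound $V$ uniformly over each $\Delta$-block by its value at the block start time times a multiplicative ``excursion'' factor $Z_{N'}$, but also to ensure the collection $(Z_{N'})$ is uniformly $L^2$-integrable. Both ingredients rely crucially on the sub-Gaussian assumption in Condition \ref{expint} and the specific scaling of $\varpi$ in \eqref{ins204}; in particular, the stronger $L^{1+\rho}$ moment bound from Lemma \ref{Lemma:bnd} (rather than merely $L^1$) is indispensable, both for the second-moment control of the martingale increments $\eta_{N'}$ and for the exponential moment bounds on $Z_{N'}$.
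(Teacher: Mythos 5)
Your two-part plan is sound and your overall strategy is correct, but it follows a genuinely different route from the paper's. The paper represents $\nu_n(V)$ as a time-integral $\frac{1}{\Lambda_n}\int_0^{\Lambda_n}V(\hat X(\lambda(t)))\,dt$, feeds Lemma~\ref{ineq} (with $\zeta=0$) into this integral, and rearranges to isolate a constant, a telescoping term $T_1$ (handled via Lemma~\ref{Lemma:bnd} and Markov), and a martingale-difference integral $T_2$ (handled by Kronecker plus Chow after an even/odd split). You instead first establish the discrete Cesàro bound $\limsup_N N^{-1}\sum_{N'<N}V(X_{\tau_{N'}})\le\phi/\beta$ along block endpoints via a direct Foster--Lyapunov recursion, and then extend to all indices $k$ via an explicit multiplicative excursion factor $Z_{N'}$ controlling the in-block oscillation of $V$. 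Your route trades the paper's continuous-time bookkeeping for a second martingale SLLN; both ultimately rest on the same trio of ingredients (Lemma~\ref{ineq}, Lemma~\ref{Lemma:bnd}, Chow-type SLLN), but your decomposition is more modular. Fixing $\rho=1$ to work in $L^2$ rather than $L^{1+\rho}$ is a legitimate simplification for the purposes of this lemma, since the conclusion is indifferent to the particular $\varpi$.

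There is, however, one concrete gap in the second SLLN step. You treat $\sum_{N'=0}^{N-1}V(X_{\tau_{N'}})\bigl(Z_{N'}-\EE[Z_{N'}\mid\clf_{\tau_{N'}}]\bigr)$ as a martingale in the filtration $\{\clf_{\tau_N}\}$, which requires $Z_{N'}\in\clf_{\tau_{N'+1}}$. But your $Z_{N'}=\exp(\varpi CK\,\bar\nu(N'\Delta,\Delta))$ uses the excursion supremum over $s\in[\Lambda_{\tau_{N'}},\Lambda_{\tau_{N'}}+\Delta+\step_0]$, and since $\Lambda_{\tau_{N'+1}}-\Lambda_{\tau_{N'}}<\Delta+\step_0$, this range can extend past $\Lambda_{\tau_{N'+1}}$, so $Z_{N'}$ may depend on $U_j$ with $j>\tau_{N'+1}$ and the martingale property fails. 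The fix is easy: for the intra-block bound you only need to cover $s=\Lambda_j$ for $\tau_{N'}\le j<\tau_{N'+1}$, so replace $Z_{N'}$ by
\begin{equation*}
Z'_{N'}=\exp\Bigl(\varpi CK\max_{\tau_{N'}\le j<\tau_{N'+1}}\Bigl|\textstyle\sum_{q=\tau_{N'}+1}^{j}\sigma(X_{q-1})\sqrt{\step_q}\,U_q\Bigr|\Bigr),
\end{equation*}
which is $\clf_{\tau_{N'+1}-1}$-measurable, still bounds $V(X_j)/V(X_{\tau_{N'}})$ over the block, and still has a conditional second moment bounded by a constant via Lemma~\ref{exp}. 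Alternatively one can retain your $Z_{N'}$ and split the sum over even and odd $N'$, as the paper does for $T_2$. Either patch closes the gap; without it the centered-sum argument as written does not deliver a martingale.
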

\begin{proof}
     Let $n_0$ be such that $\stepsum_{n_0}> \Delta$. Then it suffices to consider the supremum in the above display over
    all $n \ge n_0$. For $i \in \NN_0$, define $s(i) = \inf \{j\in \NN_0: \stepsum_j \ge i \Delta\}$. Then $s(\lfloor \stepsum_n/\Delta\rfloor) \le n$ and therefore, for $n \ge n_0$,
    \bes \begin{split}
    \nu_n(V) =& \frac{1}{\stepsum_n}\sum_{k=1}^n \step_k V(X_{k-1})
    = \frac{1}{\stepsum_n}\int_0^{\stepsum_n} V(\hat{X}(\step(t)))dt \\
    \le& \frac{1}{\stepsum_{s(\lfloor \stepsum_n/\Delta\rfloor)} } \int_0^{(\lfloor \stepsum_n/\Delta\rfloor +1)\Delta}V(\hat{X}(\step(t)))dt .\\
    \end{split} \ees

    Using Lemma \ref{ineq} with $\zeta = 0$, we have
    \bes \begin{split}\frac{1}{\stepsum_{s(\lfloor \stepsum_n/\Delta\rfloor)} } &\int_0^{(\lfloor \stepsum_n/\Delta\rfloor +1)\Delta}V(\hat{X}(\step(t)))dt\\
     &\le \frac{1}{\stepsum_{s(\lfloor \stepsum_n/\Delta\rfloor)} } \int_0^{(\lfloor \stepsum_n/\Delta\rfloor +1)\Delta} [ V(\hat{X}(\step(t))) -\EE(V(\hat{X}(\step(t+\Delta)))|\clf_{j(t)})] dt \\
      &+ \frac{1}{\stepsum_{s(\lfloor \stepsum_n/\Delta\rfloor)} } \int_0^{(\lfloor \stepsum_n/\Delta\rfloor +1)\Delta} [V(\hat{X}(\step(t)))(1-\beta)+\phi] dt.
\end{split} \ees
Thus, rearranging terms,
    \bes \begin{split} \frac{\beta}{\stepsum_{s(\lfloor \stepsum_n/\Delta\rfloor)} } &\int_0^{(\lfloor \stepsum_n/\Delta\rfloor +1)\Delta}V(\hat{X}(\step(t)))dt \\
    &\le \frac{1}{\stepsum_{s(\lfloor \stepsum_n/\Delta\rfloor)} } \int_0^{(\lfloor \stepsum_n/\Delta\rfloor +1)\Delta} [ V(\hat{X}(\step(t))) -\EE(V(\hat{X}(\step(t+\Delta)))|\clf_{j(t)})] dt \\
     &+ \phi \frac{(\lfloor \stepsum_n/\Delta\rfloor
+1)\Delta}{\stepsum_{s(\lfloor \stepsum_n/\Delta\rfloor)}}.
\end{split} \ees

Next note that $\stepsum_{s(\lfloor
\stepsum_n/\Delta\rfloor)} \ge \lfloor \stepsum_n/\Delta\rfloor
\Delta$, and, for $n \ge n_0$,
$$\stepsum_{s(\lfloor \stepsum_n/\Delta\rfloor)} \ge
\stepsum_{s(1)}\ge \step_1.$$ Thus $$\sup_{n\ge n_0}
    \phi\frac{(\lfloor \stepsum_n/\Delta\rfloor
+1)\Delta}{\stepsum_{s(\lfloor \stepsum_n/\Delta\rfloor)}} \le
\phi(1+\frac{\Delta}{\step_1}) < \infty.$$ To prove the lemma, it is
now enough to show \be \label{III}\sup_{n\ge n_0}
\frac{1}{\stepsum_{s(\lfloor \stepsum_n/\Delta\rfloor)} }
\int_0^{(\lfloor \stepsum_n/\Delta\rfloor +1)\Delta} [
V(\hat{X}(\step(t))) -\EE(V(\hat{X}(\step(t+\Delta)))|\clf_{j(t)})]
dt <\infty, \quad \text{a.e. } \omega. \ee

The above expression can be split into two terms:
\bes \begin{split}\frac{1}{\stepsum_{s(\lfloor \stepsum_n/\Delta\rfloor)} } \int_0^{(\lfloor \stepsum_n/\Delta\rfloor +1)\Delta} & [ V(\hat{X}(\step(t))) -\EE(V(\hat{X}(\step(t+\Delta)))|\clf_{j(t)})] dt \\
&= \frac{1}{\stepsum_{s(\lfloor \stepsum_n/\Delta\rfloor)} } \int_0^{(\lfloor \stepsum_n/\Delta\rfloor +1)\Delta} [ V(\hat{X}(\step(t))) -V(\hat{X}(\step(t+\Delta)))] dt \\
 &+\frac{1}{\stepsum_{s(\lfloor \stepsum_n/\Delta\rfloor)} } \int_0^{(\lfloor \stepsum_n/\Delta\rfloor +1)\Delta} [ V(\hat{X}(\step(t+\Delta))) -\EE(V(\hat{X}(\step(t+\Delta)))|\clf_{j(t)})] dt \\
 &\equiv T_1+T_2\end{split}\ees

Consider the first term:
\bes\begin{split} T_1=& \frac{1}{\stepsum_{s(\lfloor \stepsum_n/\Delta\rfloor)} } \int_0^{(\lfloor \stepsum_n/\Delta\rfloor +1)\Delta} V(\hat{X}(\step(t)))  dt - \frac{1}{\stepsum_{s(\lfloor \stepsum_n/\Delta\rfloor)} } \int_0^{(\lfloor \stepsum_n/\Delta\rfloor +1)\Delta} V(\hat{X}(\step(t+\Delta))) dt \\
=& \frac{1}{\stepsum_{s(\lfloor \stepsum_n/\Delta\rfloor)} } \int_0^{(\lfloor \stepsum_n/\Delta\rfloor +1)\Delta} V(\hat{X}(\step(t)))  dt - \frac{1}{\stepsum_{s(\lfloor \stepsum_n/\Delta\rfloor)} } \int_\Delta^{(\lfloor \stepsum_n/\Delta\rfloor +1)\Delta+\Delta} V(\hat{X}(\step(t))) dt \\
=& \frac{1}{\stepsum_{s(\lfloor \stepsum_n/\Delta\rfloor)} } \int_0^{\Delta} V(\hat{X}(\step(t)))  dt - \frac{1}{\stepsum_{s(\lfloor \stepsum_n/\Delta\rfloor)} } \int_{(\lfloor \stepsum_n/\Delta\rfloor +1)\Delta}^{(\lfloor \stepsum_n/\Delta\rfloor +1)\Delta+\Delta} V(\hat{X}(\step(t))) dt \\
\le&\frac{1}{\stepsum_{s(\lfloor \stepsum_n/\Delta\rfloor)} }
\sum_{k:\stepsum_{k-1}\le \Delta}\step_k V(X_{k-1}).
 \end{split}\ees

Let $Z=\sum_{k:\stepsum_{k-1}\le \Delta}\step_k V(X_{k-1})$. Then
from \eqref{bndrho}, we have $\EE Z \le a_2(\Delta+\step_0)$.
Combining this with the fact that for $ n\ge n_0$,
$\stepsum_{s(\lfloor \stepsum_n/\Delta\rfloor)} \ge \step_1$, we
have that \be \label{I} \sup_{n\ge n_0} T_1(\omega) < \infty, \quad
\text{a.e. } \omega. \ee

Next, consider $T_2$:
\bes\begin{split} T_2=& \frac{1}{\stepsum_{s(\lfloor \stepsum_n/\Delta\rfloor)} } \int_0^{(\lfloor \stepsum_n/\Delta\rfloor +1)\Delta} [ V(\hat{X}(\step(t+\Delta))) -\EE(V(\hat{X}(\step(t+\Delta)))|\clf_{j(t)})] dt\\
=& \frac{1}{\stepsum_{s(\lfloor \stepsum_n/\Delta\rfloor)} }
\sum_{i=0}^{\lfloor \stepsum_n/\Delta\rfloor}\int_{i\Delta}^{(i
+1)\Delta} [ V(\hat{X}(\step(t+\Delta)))
-\EE(V(\hat{X}(\step(t+\Delta)))|\clf_{j(t)})] dt.
 \end{split}\ees
From Kronecker's Lemma (see page 63 of \cite{Durrett}), the last sum is bounded in $n$ a.s. (in fact
converges to 0) if the following series is summable a.s. \bes
\sum_{i=1}^{\infty}\frac{1}{\stepsum_{s(i)} } \int_{i\Delta}^{(i
+1)\Delta} [ V(\hat{X}(\step(t+\Delta)))
-\EE(V(\hat{X}(\step(t+\Delta)))|\clf_{j(t)})] dt. \ees

Consider the sum over even and odd terms separately. For even terms,
the sum can be written as \be\label{sum2}
\sum_{k=1}^{\infty}\frac{1}{\stepsum_{s(2k)} } \int_{2k\Delta}^{(2k
+1)\Delta} [ V(\hat{X}(\step(t+\Delta)))
-\EE(V(\hat{X}(\step(t+\Delta)))|\clf_{j(t)})] dt. \ee Let
\[\xi_{k+1}=\frac{1}{\stepsum_{s(2k)} } \int_{2k\Delta}^{(2k
+1)\Delta} [ V(\hat{X}(\step(t+\Delta)))
-\EE(V(\hat{X}(\step(t+\Delta)))|\clf_{j(t)})] dt\]
 and
$\clg_k=\clf_{j(2k\Delta)}$, then we have $\EE(\xi_{i+1}|\clg_i)
=0$. Also note that $\xi_{i+1}$ is $\clg_{i+1}$ measurable. Thus
$S_n = \sum_{i=1}^n \xi_i$ is a martingale with respect to the
filtration $\{\clg_n\}$. Consequently, by Chow's Theorem (see Theorem 2.17 of
\cite{HallHeyde80}), the series in \eqref{sum2} is
a.s. summable if $\sum_{k=1}^\infty \EE (|\xi_k|^{1+\rho}) <
\infty$. Now note that \bes \begin{split} \EE
|\xi_k|^{1+\rho}&=\EE\left(\left|\frac{1}{\stepsum_{s(2k)} }
\int_{2k\Delta}^{(2k +1)\Delta} [ V(\hat{X}(\step(t+\Delta)))
-\EE(V(\hat{X}(\step(t+\Delta)))|\clf_{j(t)})]
dt\right|^{1+\rho}\right)\\
&\le \frac{2^{1+\rho}\Delta^{1+\rho}}{\stepsum_{s(2k)}^{1+\rho} }
\sup_t \EE(V(\hat{X}(t))^{1+\rho}) \le
\frac{2^{1+\rho}\Delta^{1+\rho} a_2}{\stepsum_{s(2k)}^{1+\rho} },
 \end{split}\ees
where the last inequality follows form Lemma \ref{Lemma:bnd}. Since
$\stepsum_{s(k)} \ge k \Delta$, we have that \bes
\sum_{k=1}^{\infty} \frac{1}{\stepsum_{s(k)}^{1+\rho} }\le
\frac{1}{\Delta^{1+\rho}}\sum_{k=1}^{\infty}\frac{1}{k^{1+\rho}} <
\infty .\ees This proves that the series in \eqref{sum2} is
summable.
The odd terms are treated in a similar manner.
Thus we have proved \be \label{II}\sup_{n\ge n_0} T_2(\omega) <
\infty, \quad \text{a.e. } \omega. \ee Now \eqref{III} is an
immediate consequence of \eqref{I} and \eqref{II}, which proves the
lemma.
\end{proof}

\subsection{Identification of the limit}\label{sec:limit}
In this section we will complete the proof of Theorem \ref{thm:cgce}
by arguing that for a.e. $\omega$, every weak limit point of
$\nu_n(\omega)$ equals $\nu$. For this we will use the following
extension of the Echeverria Criteria (see \cite{Kurtz91,
Weiss81}, see also Theorem 5.7 of \cite{Budhiraja03}).

\begin{theorem}\label{thm:inv}
Let $\nu_0 \in \clp(G)$ and $\mu_0^i \in \clm_F(F_i)$, $i=1,...,N$
be such that for all $f \in C_c^2(G)$, \be \label{ins2045}
    \nu_0(\cla f) +\sum_{i=1}^N \mu_0^i(D_i f)=0.
    \ee Then $\nu_0 =\nu$.
\end{theorem}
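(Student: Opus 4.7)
The plan is to follow the Echeverria--Weiss strategy in its Kurtz--Stockbridge form for constrained diffusions: given the adjoint identity \eqref{ins2045}, produce a stationary solution to the constrained martingale problem whose one-dimensional marginal is $\nu_0$, and then invoke the uniqueness of the invariant measure for the reflected diffusion (the theorem attributed to \cite{AtarBudhirajaDupuis01} recalled earlier) to force $\nu_0=\nu$.

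First, I would extend the class of admissible test functions from $C_c^2(G)$ to all of $C_b^2(G)$. Because $b$ and $\sigma$ are bounded (Condition \ref{coeff}), both $\cla f$ and $D_i f$ are bounded whenever $f \in C_b^2(G)$, so a standard cutoff and dominated-convergence argument based on $\nu_0(G)<\infty$ and $\mu_0^i(F_i)<\infty$ promotes \eqref{ins2045} to this larger test class.

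Second, I would build the stationary process. By the constrained-martingale-problem machinery of \cite{Kurtz91} (developed for reflected diffusions in \cite{DaiKurpre} and reformulated as Theorem 5.7 of \cite{Budhiraja03}), the pair $(\nu_0,\{\mu_0^i\})$ satisfying \eqref{ins2045} gives rise to a stationary process $\{X(t)\}_{t\ge 0}$ and continuous nondecreasing processes $L_i$ supported on $\{t:X(t)\in F_i\}$, such that $X(t)\sim\nu_0$ for every $t$, $\EE\int_0^1 h(X(s))\,dL_i(s)=\mu_0^i(h)$ for bounded measurable $h$, and $f(X(t))-f(X(0))-\int_0^t \cla f(X(s))\,ds-\sum_i\int_0^t D_i f(X(s))\,dL_i(s)$ is a martingale for every $f\in C_c^2(G)$. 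Setting $k(t):=\sum_i\int_0^t d_i\,dL_i(s)$, I would then verify the Skorohod-problem conditions (i)--(v) for $(X,k)$: applying the identity to the auxiliary function $g$ of Lemma \ref{lemma:g} gives $\sum_i L_i(t)\le \sum_i\int_0^t\langle\nabla g(X(s)),d_i\rangle\,dL_i(s)$, which together with Condition \ref{completeS} yields $|k|(t)<\infty$ and $\gamma(s)\in d(X(s))$ $d|k|$-a.e. By the non-degeneracy and Lipschitz-SM hypotheses (Conditions \ref{nondeg} and \ref{skolip}), $X$ then satisfies \eqref{consdf} in law.

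Finally, uniqueness of the invariant probability measure $\nu$ forces $\nu_0=\nu$, completing the proof. I expect the hardest step to be the construction and identification in the second step: although \eqref{ins2045} encodes only infinitesimal information, it must be upgraded to a genuine stationary reflected process with correctly-oriented boundary pushing. The test-function freedom together with Lemma \ref{lemma:g} is precisely what separates the interior dynamics from the boundary contribution and supplies the total-variation bound on $k$; without the complete-S assumption the candidate measures $\mu_0^i$ could fail to be realizable within the Skorohod framework.
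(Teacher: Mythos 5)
The paper does not prove Theorem \ref{thm:inv}; it is stated as an extension of Echeverria's criterion and cited directly from \cite{Kurtz91, Weiss81} and Theorem 5.7 of \cite{Budhiraja03}, and that citation is the paper's entire treatment of the result. Your sketch correctly reconstructs the strategy used in those references: pass the adjoint identity \eqref{ins2045} through the Kurtz--Stockbridge constrained-martingale-problem machinery to obtain a stationary process with $\nu_0$ as its one-dimensional marginal and the $\mu_0^i$ as boundary occupation measures, verify the Skorohod-problem structure (using the function $g$ of Lemma \ref{lemma:g} for the total-variation bound on the pushing term and non-degeneracy for the martingale representation step), and close with the uniqueness of the invariant measure from \cite{AtarBudhirajaDupuis01}. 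This is consistent with the cited works and with the role the theorem plays in the paper. As you note yourself, the core step---manufacturing an actual stationary reflected process from the purely infinitesimal identity \eqref{ins2045}---is where all the difficulty lies, and your account defers that step to the cited machinery, which is exactly what the paper does.
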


In order to apply the above theorem to show convergence of $\nu_n$
to $\nu$, we will consider a sequence of finite measure
$\{\mu_n^i\}_{n\in\NN}$; $i=1,...,N$, which, roughly speaking,
correspond to the prelimit versions of the measures $\{\mu_0^i\}$
that appear in the theorem above. We now describe this sequence.

For $u\in \RR^m,\  v \in G, \ r \in (0, \infty)$, define, for $t\in
[0,1]$, \beq {\bf z}(u,v,r|t)&\equiv& {\bf z}(t)=v+(b(v)r +
\sigma(v)\sqrt{r}u)t,\\
{\bf x}(u,v,r|t)&\equiv& {\bf x}(t)=\Gamma({\bf z})(t),\\
{\bf y}(u,v,r|t)&\equiv& {\bf y}(t)={\bf x}(t)-{\bf z}(t).\eeq Then,
one can represent the trajectory ${\bf y}$ as \be \label{refproc}
{\bf y}(t)=\sum_{i=1}^N d_i \int_0^t \alpha_i(s)d|{\bf y}|(s); \quad
t\in [0,1],\ee where $ \alpha_i(s)\equiv  \alpha_i(u,v,r|s)\in
[0,1]$ and $ \alpha_i(s)>0$ only if ${\bf x}(s) \in F_i$. Also, let,
for $t\in [0,1]$ \bes {\bf \Pi^t}(u,v,r)={\bf z}(1)+t({\bf
x}(1)-{\bf z}(1)),\ees
 \bes {\bf L^i}(u,v,r)=\int_0^1 {\alpha_i}(t)d|{\bf y}|(t), \quad i=1,...,N.\ees
 Finally for $k \in \NN_0$, let
\bes \Pi_k^t= {\bf \Pi^t}(U_{k+1},X_k,\step_{k+1}),\; L_k^i=
{\bf L^i}(U_{k+1},X_k,\step_{k+1}).\ees

For $k \in \NN_0$ and $i=1,...,N$, define a $\clm_F(\RR^m)$ valued
random variable $m_k^i$ by the relation \be \label{mki}
 \langle \psi,m_k^i \rangle = \int_0^1 \EE_{X_k}[\psi(\Pi_k^t)L_k^i]dt,\quad \psi \in BM_+(\RR^m),\ee
where $\EE_{X}[Z]$ denotes $\EE[Z|X]$, and $BM_+(\RR^m)$ is the space of nonnegative bounded measurable functions on $\RR^m$.

For $n \in \NN$ and $i=1,...,N$, let $\mu_n^i$ be a $\clm_F(\RR^m)$
valued random variable defined as
$$\mu_n^i(A)=\frac{1}{\stepsum_n}\sum_{k=0}^{n-1}m_k^i(A); \quad A
\in \clb(\RR^m).$$

The following lemma relates the above family of random measures with
our approximation scheme. Recall the definition of the filtration
$\{\clf_k\}$ in Section \ref{sec:tight}.

\begin{lemma}\label{lemma:telesum}
For every $f \in C_b^2(\RR^m)$, there exists a
sequence of real random variables $\{\xi_n^f\}_{n\in\NN}$ such that
\be
\label{telesum} \frac{1}{\stepsum_n} \sum_{k=0}^{n-1}\EE[f(X_{k+1})
- f(X_k)|\clf_{k}]=\sum_{i=1}^N \mu_n^i (D_i f)+ \nu_n(\cla
f)+\xi_n^f, \ee and $\sup_n \xi_n^f(\omega)<\infty$ a.s. Furthermore if $f$ has compact support then $\xi_n^f \rightarrow 0$ a.s. as $n\rightarrow \infty$.
\end{lemma}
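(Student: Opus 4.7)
The plan is to apply two Taylor expansions to $f(X_{k+1}) - f(X_k)$, matching the leading terms against $\sum_i \mu_n^i(D_i f) + \nu_n(\cla f)$ and collecting everything else into $\xi_n^f$. Set $\mathbf{z}_k(t) = X_k + t(b(X_k)\step_{k+1} + \sigma(X_k)\sqrt{\step_{k+1}}U_{k+1})$, $\mathbf{x}_k(t) = \Gamma(\mathbf{z}_k)(t)$, $\mathbf{y}_k = \mathbf{x}_k - \mathbf{z}_k$, so that $\mathbf{z}_k(0) = X_k$ and $\mathbf{x}_k(1) = X_{k+1}$. I split
\[f(X_{k+1}) - f(X_k) = [f(\mathbf{x}_k(1)) - f(\mathbf{z}_k(1))] + [f(\mathbf{z}_k(1)) - f(X_k)]\]
and treat each piece separately.

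For the reflection bracket, the fundamental theorem of calculus along the segment $\Pi_k^t = \mathbf{z}_k(1) + t(\mathbf{x}_k(1) - \mathbf{z}_k(1))$, combined with the Skorohod representation $\mathbf{y}_k(1) = \sum_{i=1}^N d_i L_k^i$, produces exactly $\sum_i L_k^i \int_0^1 D_i f(\Pi_k^t)\,dt$. Taking $\EE[\,\cdot\,|\clf_k]$ (using $\clf_k$-measurability of $X_k$ and independence of $U_{k+1}$) yields $\sum_i \langle D_i f, m_k^i\rangle$ by the definition of $m_k^i$, and summing over $k$ and dividing by $\stepsum_n$ gives $\sum_i \mu_n^i(D_i f)$ with no remainder. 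For the Euler bracket, a second-order Taylor expansion with integral remainder yields
\[f(\mathbf{z}_k(1)) - f(X_k) = \nabla f(X_k)\cdot(\mathbf{z}_k(1) - X_k) + \tfrac{1}{2}(\mathbf{z}_k(1) - X_k)' D^2 f(X_k)(\mathbf{z}_k(1) - X_k) + R_k.\]
Combined with the moment identities $\EE U_{k+1} = 0$ and $\EE U_{k+1} U_{k+1}' = I_m$ (consequences of Condition \ref{expint} and mutual independence of the components of $U_{k+1}$), this gives
\[\EE[f(\mathbf{z}_k(1)) - f(X_k)\mid\clf_k] = \step_{k+1}\cla f(X_k) + \tfrac{\step_{k+1}^2}{2} b(X_k)' D^2 f(X_k) b(X_k) + \EE[R_k\mid\clf_k].\]
The leading term sums to $\nu_n(\cla f)$, and I define $\xi_n^f$ to be the $\stepsum_n^{-1}$-weighted sum of the two residual contributions over $k=0,\dots,n-1$.

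What remains is the analysis of $\xi_n^f$. For general $f \in C_b^2$, the crude bound $|R_k| \le \|D^2 f\|_\infty |\mathbf{z}_k(1) - X_k|^2$ combined with Conditions \ref{coeff} and \ref{expint} gives $\EE[|R_k|\mid\clf_k] \le C\step_{k+1}$ for a deterministic $C$, hence $|\xi_n^f| \le C$ pointwise and $\sup_n|\xi_n^f| < \infty$ almost surely. The hard part is upgrading this to $\xi_n^f \to 0$ when $f \in C_c^2$: here $D^2 f$ is uniformly continuous, so its modulus of continuity $\omega$ satisfies $\omega(s)\to 0$ as $s\to 0^+$, Taylor's theorem refines the remainder to $|R_k| \le \tfrac{1}{2}|\mathbf{z}_k(1) - X_k|^2\,\omega(|\mathbf{z}_k(1) - X_k|)$, and dominated convergence in $U_{k+1}$ (using the exponential moment bound from Condition \ref{expint}) yields $\EE[|R_k|\mid\clf_k] \le \step_{k+1}\epsilon_{k+1}$ with a deterministic sequence $\epsilon_{k+1}\to 0$ depending only on $\step_{k+1}$. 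The Toeplitz lemma (using $\step_k\to 0$ and $\sum\step_k=\infty$) then delivers $\frac{1}{\stepsum_n}\sum_k \step_{k+1}\epsilon_{k+1}\to 0$ and $\frac{1}{\stepsum_n}\sum_k \step_{k+1}^2\to 0$, so $\xi_n^f\to 0$. The principal technical point is this last Toeplitz upgrade, where uniform continuity of $D^2 f$ --- and hence the compact support hypothesis --- is essential.
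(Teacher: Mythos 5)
Your proposal is correct and follows essentially the same route as the paper: you split $f(X_{k+1})-f(X_k)$ into the reflection bracket and the Euler bracket, use the fundamental theorem of calculus along the segment $\Pi_k^t$ with the Skorohod representation to identify the $\mu_n^i(D_if)$ contribution exactly, and apply a second-order Taylor expansion plus the moment structure of $U_{k+1}$ to peel off $\nu_n(\cla f)$ and the residual. The boundedness of $\xi_n^f$ via the crude $O(\step_{k+1})$ bound, and the convergence to zero for compactly supported $f$ via the modulus of continuity of $D^2f$ together with a Cesàro/Toeplitz argument, match the paper's proof (which phrases the same thing with the function $h$ and an explicit $\varepsilon$–$\theta$ splitting of $U_{k+1}$ rather than invoking Toeplitz by name).
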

\begin{proof}
Fix $(u,v,r) \in \RR^m \times G \times (0, \infty)$. Using the
notation introduced above, we have from Taylor's theorem,
 \bes
    f(z(1))-f(v)=\langle \nabla f(v),\eta \rangle + \frac{1}{2}\eta' D^2 f(v)\eta +R_2(v,z(1))
    \ees
    where
  \bes R_2(x,y)=f(y)-f(x)-\langle \nabla f(x),
y-x\rangle-\frac{1}{2} (y-x)^T D^2f(x)(y-x)\ees
 and \bes \eta \equiv
\eta(u,v,r)=b(v)r + \sigma(v)\sqrt{r}u.\ees Define
\bes r_2(x,y)=
\frac{1}{2} \sup_{t\in (0,1)}||D^2f(x+t(y-x))-D^2f(x)||,
\ees then we
have $|R_2(x,y)| \le r_2(x,y)|x-y|^2$.

    Also
    \bes \begin{split}
    f({\bf x}(1))-f({\bf z}(1)) &=\int_0^1 \frac{df({\bf z}(1)+t({\bf x}(1)-{\bf z}(1)))}{dt} dt\\
    &=\int_0^1 \nabla{f({\bf z}(1)+t({\bf x}(1)-{\bf z}(1)))}\cdot ({\bf x}(1)-{\bf z}(1)) {dt}\\
    &=\sum_{i=1}^N \int_0^1 \nabla{f({\bf z}(1)+t({\bf x}(1)-{\bf z}(1)))}{dt}\cdot d_i \int_0^1 \alpha_i(t)d|{\bf y}|(t).
\end{split}
    \ees

    Fix a $k\in \NN$ and let $v=X_k$, $u=U_{k+1}$ and $r=\step_{k+1}$. Then
    \bes \EE[f(X_{k+1}) - f(X_k)|\clf_{k}] = \EE[f({\bf x}(1))-f({\bf z}(1)) + f({\bf z}(1))-f(v)|\clf_{k}].\ees

From the definition of $m_k^i$ in \eqref{mki} and observing that
$\{X_k\}$ is a Markov chain (with respect to the filtration $\{\clf_k\}$) and $U_{k+1}$ is independent of $\clf_k$, it follows that \bes \EE[
f({\bf x}(1))-f({\bf z}(1))|\clf_{k}] = \sum_{i=1}^N m_k^i(D_i
f),\ees Using independence of $U_{k+1}$ from $\clf_k$ once more, \bes
\begin{split}  \EE[ f({\bf z}(1))-f(v)|\clf_{k}] =&
\step_{k+1}\langle \nabla f(X_k),b(X_k) \rangle +
\frac{1}{2}\step_{k+1} \sigma(X_k)' D^2 f(X_k)\sigma(X_k)\\
&+\frac{1}{2}\step_{k+1}^2 b(X_k)' D^2 f(X_k)b(X_k)+
\EE[R_2(X_k,X_k+\eta_k)|\clf_{k}]\\
=&\step_{k+1}\cla f(X_k)+\xi^f(k),
\end{split} \ees
where \[\xi^f(k)=\frac{1}{2}\step_{k+1}^2 b(X_k)' D^2 f(X_k)b(X_k)+
\EE[R_2(X_k,X_k+\eta_k)|\clf_{k}]\] and $\eta_k=\eta(U_{k+1},X_k,
\step_{k+1})$.

Thus we have \bes \begin{split} \frac{1}{\stepsum_n}
\sum_{k=0}^{n-1}&\EE[f(X_{k+1}) - f(X_k)|\clf_{k}]\\
&=\frac{1}{\stepsum_n}\sum_{k=0}^{n-1} \left[\sum_{i=1}^N m_k^i(D_i
f)+ \step_{k+1}\cla f(X_k)+\xi^f(k)\right]\\
&=\sum_{i=1}^N \mu_n^i (D_i f)+ \nu_n(\cla
f)+\frac{1}{\stepsum_n}\sum_{k=0}^{n-1}\xi^f(k).
\end{split}\ees
Equality in \eqref{telesum} follows on taking $\xi_n^f=\frac{1}{\stepsum_n}\sum_{k=0}^{n-1}\xi^f(k)$.

We now show that $\sup_n \xi_n^f(\omega)<\infty$ a.s.
Write $$\xi^f(k)=\frac{1}{2}\step_{k+1}^2 b(X_k)' D^2
f(X_k)b(X_k)+ \EE[R_2(X_k,X_k+\eta_k)|\clf_{k}]\equiv
\xi_1^f(k)+\xi_2^f(k).$$ The term
$\frac{1}{\stepsum_n}\sum_{k=0}^{n-1}\xi_1^f(k)$ converges to zero
because of the boundedness of $b$ and $D^2 f$. Consider now the
contribution from $\xi_2^f(k)$. Let for $p \in \RR_+$,
$$h(p)=\frac{1}{2}\sup_{\begin{subarray}{l} x_1,x_2 \in \RR^m\\|x_1-x_2|\le p\end{subarray}}||D^2f(x_2)-D^2f(x_1)||.$$ Then $$|R_2(X_k,X_k+\eta_k)| \le h(\eta_k)|\eta_k|^2$$ and so for some $\kappa_1\in (0,\infty)$,
\bes \begin{split}
|\xi_2^f(k)|&\le \EE[h(\eta_k)|\eta_k|^2|\clf_k]\\
&\le||h||_\infty\kappa_1\step_{k+1}.
\end{split} \ees
Thus $\sup_n \xi_n^f(\omega)<\infty$ a.s. This completes the first part of the lemma.

Finally if $f$ in addition has compact support, we have $h(p)\rightarrow 0$ as $p\rightarrow 0$. Fix $\epsilon>0$. Since $b$, $\sigma$ are bounded, we can find for each $\theta \in (0,\infty)$, $k_\theta \in \NN$ such that for every $k\geq k_\theta$,
\bes  |h(b(x_k)\step_{k+1}+\sigma(x_k)\sqrt{\step_{k+1}}U_{k+1})|1_{|U_{k+1}|\leq\theta}\leq\epsilon.
\ees
Also, for some $l_\eta \in(0,\infty)$, for all $k \in \NN$,
\bes
  \EE[|\eta_k|^2 1_{|U_{k+1}|\geq\theta}|\clf_k]\leq
l_\eta(\step^{3/2}_k+\step_k\EE[|U_1|^21_{|U_1|\geq\theta}]) \quad \text{a.s.,}
\ees
\bes
  \EE[|\eta_k|^2|\clf_k]\leq l_\eta\step_k \quad \text{a.s.}
\ees
Choose $\theta_0\in (0,\infty)$ such that
$\EE[|U_1|^21_{|U_1|\geq\theta_0}]\leq\epsilon$. Then
\bes
  \begin{split}
    \frac{1}{\stepsum_{n}}\sum^{n-1}_{k=k_{\theta_0}}|\xi_2^f(k)|\leq\epsilon
l_\eta\frac{1}{\stepsum_n}\sum^{n-1}_{k=k_{\theta_0}}\step_k+||h||_\infty
l_\eta(\frac{1}{\stepsum_n}\sum_{k=k_{\theta_0}}^{n-1}\step_k^{3/2}+\frac{\epsilon}{\stepsum_n}\sum_{k=k_{\theta_0}}^{n-1}\step_k).
  \end{split}
\ees
Thus,
\bes
     \frac{1}{\stepsum_{n}}\sum^{n-1}_{k=0}|\xi_2^f(k)|\leq
\frac{1}{\stepsum_n}\sum^{k_{\theta_0}-1}_{k=0}\xi_2^f(k)+\epsilon
l_\eta(1+||h||_\infty)+||h||_\infty
l_\eta\frac{1}{\stepsum_n}\sum_{k=k_{\theta_0}}^{n-1}\step_k^{3/2}.
\ees Sending $n\rightarrow\infty$ and then $\epsilon\rightarrow 0$,
we now see that
$\frac{1}{\stepsum_{n}}\sum^{n-1}_{k=0}|\xi_2^f(k)|\rightarrow 0$ as
$n\rightarrow \infty$. The result follows.
\end{proof}

The following lemma shows that the left side of the expression in
\eqref{telesum} converges to 0 as $n\rightarrow \infty$.

\begin{lemma}\label{lemma:telecgc}
For every $f \in C_b^2(G)$, \bes \frac{1}{\stepsum_n}
\sum_{k=0}^{n-1}\EE[f(X_{k+1}) - f(X_k)|\clf_{k}]\rightarrow 0
\text{ a.s., \ \ as } n\rightarrow \infty.\ees
\end{lemma}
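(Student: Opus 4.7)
The plan is to decompose the sum into a telescoping piece plus a martingale piece, and handle each separately. Set
\[
M_{k+1} \doteq f(X_{k+1}) - f(X_k) - \EE[f(X_{k+1}) - f(X_k)\mid \clf_k],
\]
so that $\{M_{k+1}\}$ is a sequence of $\clf_{k+1}$-martingale differences. Then
\[
\frac{1}{\stepsum_n}\sum_{k=0}^{n-1}\EE[f(X_{k+1})-f(X_k)\mid \clf_k]
= \frac{f(X_n)-f(X_0)}{\stepsum_n} - \frac{1}{\stepsum_n}\sum_{k=0}^{n-1} M_{k+1}.
\]
The first term tends to $0$ a.s.\ since $f$ is bounded and $\stepsum_n\uparrow\infty$. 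Thus it suffices to prove $\stepsum_n^{-1}\sum_{k=0}^{n-1} M_{k+1}\to 0$ a.s.

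For this I would apply the martingale strong law (Chow's theorem, as cited already in the proof of Lemma~\ref{lemma:tight} from Hall--Heyde), which reduces the matter to establishing
\[
\sum_{k=1}^\infty \frac{\EE|M_k|^2}{\stepsum_k^2} < \infty.
\]
The key estimate is $\EE[|M_{k+1}|^2] \le C\step_{k+1}$ for some constant $C$ depending only on $f$ and the problem data. To get this, first note that since $X_k \in G$, $\Gamma$ applied to the constant path $X_k$ equals $X_k$, and the Lipschitz property of the Skorokhod map (Condition \ref{skolip}) yields
\[
|X_{k+1}-X_k| = |\cls(X_k,Y_{k+1}-X_k) - X_k| \le K |Y_{k+1}-X_k| \le K a_1 \step_{k+1} + K a_1 \sqrt{\step_{k+1}}|U_{k+1}|.
\]
Combined with $\EE|U_{k+1}|^2 = m$ from Condition \ref{expint} and the uniform bound on $\{\step_k\}$, this gives $\EE|X_{k+1}-X_k|^2 \le C' \step_{k+1}$. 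Since $f \in C_b^2(G)$ has a bounded gradient,
\[
\EE|M_{k+1}|^2 \le 4\,\EE|f(X_{k+1})-f(X_k)|^2 \le 4\|\nabla f\|_\infty^2\,\EE|X_{k+1}-X_k|^2 \le C \step_{k+1}.
\]

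Finally, using $\step_k = \stepsum_k-\stepsum_{k-1}$, one has the standard telescoping bound $\step_k/\stepsum_k^2 \le 1/\stepsum_{k-1} - 1/\stepsum_k$ (for $k$ large enough that $\stepsum_{k-1}>0$), so that $\sum_k \step_k/\stepsum_k^2 < \infty$, and therefore $\sum_k \EE|M_k|^2/\stepsum_k^2 < \infty$. Chow's theorem then gives $\stepsum_n^{-1}\sum_{k=1}^n M_k \to 0$ a.s., completing the proof. I do not anticipate any serious obstacles; the only technical point is the one-step bound $|X_{k+1}-X_k| \le K|Y_{k+1}-X_k|$, which crucially relies on the global Lipschitz constant $K$ of $\Gamma$ granted by Condition \ref{skolip}.
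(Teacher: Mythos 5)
Your proof is correct and follows essentially the same route as the paper: both split the sum into a telescoping term (which vanishes since $f$ is bounded and $\stepsum_n \to \infty$) and a martingale term, and both reduce the martingale term to the summability of $\sum_k \EE|M_k|^2/\stepsum_k^2$ via the one-step bound $|X_{k+1}-X_k|\le K|Y_{k+1}-X_k|$ from the Lipschitz property of the Skorohod map. Two cosmetic remarks: the fact $\EE|U_{k+1}|^2=m$ is part of the standing normalization $\EE U_{k,j}^2=1$ on the array $\{U_{k,j}\}$ rather than a consequence of Condition \ref{expint}, and the factor $4$ in $\EE|M_{k+1}|^2\le 4\,\EE|f(X_{k+1})-f(X_k)|^2$ can be dropped altogether since the conditional variance never exceeds the unconditional second moment (this is the observation the paper uses, that $\EE[\,\cdot\,|\clf_k]$ is the $L^2$-projection onto $\clf_k$-measurable variables).
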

\begin{proof}
We can split the sum into two terms: \bes \begin{split}
\frac{1}{\stepsum_n}
\sum_{k=0}^{n-1}&\EE[f(X_{k+1}) - f(X_k)|\clf_{k}]\\
=&\frac{1}{\stepsum_n}\sum_{k=0}^{n-1}\left(\EE[f(X_{k+1})|\clf_{k}]
-f(X_{k+1})\right)+\frac{1}{\stepsum_n}\sum_{k=0}^{n-1}\left(f(X_{k+1})
-f(X_{k})\right)\\
=&T_1+T_2.
\end{split}\ees
Note that,
\bes
  |T_2|=\frac{1}{\stepsum_n}|f(X_{n}) -f(X_{0})| \rightarrow
  0,
\ees
as $n\rightarrow \infty$, since $f$ is bounded and
$\stepsum_n\rightarrow\infty$.
Also, using Kronecker's
Lemma,  \bes
T_1=\frac{1}{\stepsum_n}\sum_{k=0}^{n-1}\left(\EE[f(X_{k+1})|\clf_{k}]
-f(X_{k+1})\right) \ees
will converge to 0 once the martingale
\bes
  M_n^f:=\sum_{k=1}^{n-1}\frac{1}{\stepsum_k}(\EE[f(X_{k+1})|\clf_k]-f(X_{k+1}))
\ees
converges a.s.
Finally observing that $\EE(f(X_{k+1})|\clf_k)$ minimizes the $L^2$ distance from $f(X_{k+1})$ among $\clf_k$ measurable
square integrable random variables,
\bes
  \begin{split}
  \EE\langle M^f\rangle_\infty&=\sum_{k\geq 1}(\frac{1}{\stepsum_k})^2\EE\left(f(X_{k+1})-\EE(f(X_{k+1})|\clf_k)\right)^2\\
 &\leq\sum_{k\geq 1}(\frac{1}{\stepsum_k})^2\EE\left(f(X_{k+1})-f(X_k)\right)^2\\
 &\leq ||Df||_\infty\sum_{k\geq 1}(\frac{1}{\stepsum_k})^2\EE\left(X_{k+1}- X_k\right)^2\\
 &\leq \kappa_1 \sum_{k\geq 1} \frac{\step_{k+1}}{\stepsum_k^2}\\
 &<\infty
  \end{split}
\ees for some constant $\kappa_1$, where the last inequality follows
from the observation that for a positive sequence $\step_k$,
$\sum_{k\geq 1}\step_{k+1}/\stepsum_k^2 <\infty$. The lemma
follows.
\end{proof} 

Next we consider the limit of the first term on the right side of \eqref{telesum}.
We can regard $\mu_n^i$ to be a finite measure on the one point
compactificaion of $\RR^m$, denoted as $\bar{\RR}^m$. In order to show that
$\{\mu_n^i\}$ is a.s. a precompact sequence in $\clm_F(\bar{\RR}^m)$, it
suffices to show that $\mu_n^i(\RR^m)$ is an a.s. bounded sequence of $\RR_+$
valued random variables. This is shown in the following lemma.

\begin{lemma}\label{lemma:precmp}
For $i=1,...,N$, $$\sup_n \mu_n^i(\RR^m) < \infty, \quad \text{a.s.}$$
\end{lemma}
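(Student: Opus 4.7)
The plan is to take $g\in C_b^2(\RR^m)$ from Lemma \ref{lemma:g} as a test function and extract a one-sided bound on the total per-step reflection $\sum_{i=1}^N L_k^i$. The central observation is that in the representation $d\mathbf{y}(s)=\sum_i \alpha_i(s) d_i\, d|\mathbf{y}|(s)$, having $\alpha_i(s)>0$ forces $\mathbf{x}(s)\in F_i$ (by the definition of $d(\cdot)$), so Lemma \ref{lemma:g} yields the pointwise inequality
$$\sum_{i=1}^N \alpha_i(s)\,\langle \nabla g(\mathbf{x}(s)), d_i\rangle \;\geq\; \sum_{i=1}^N \alpha_i(s), \quad d|\mathbf{y}|\text{-a.e.\ }s.$$
Thus $g$ converts boundary-local information into a global bound on $\sum_i L_k^i$ without requiring that the pre-limit measures $m_k^i$ be supported on $F_i$.

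Fix $k$, set $w=b(X_k)\step_{k+1}+\sigma(X_k)\sqrt{\step_{k+1}}U_{k+1}$, $\mathbf{z}(t)=X_k+tw$, $\mathbf{x}=\Gamma(\mathbf{z})$, $\mathbf{y}=\mathbf{x}-\mathbf{z}$. Since $\mathbf{z}$ is absolutely continuous, so are $\mathbf{x}$ and $\mathbf{y}$, and the chain rule applied to $g\circ \mathbf{x}$ together with the display above gives
$$g(X_{k+1})-g(X_k)\;\geq\;\int_0^1 \nabla g(\mathbf{x}(s))\cdot w\,ds \;+\;\sum_{i=1}^N L_k^i.$$
Taking $\EE[\cdot|\clf_k]$ (using independence of $U_{k+1}$ from $\clf_k$ and $\clf_k$-measurability of $X_k$, so that $\EE[L_k^i|\clf_k]=m_k^i(\RR^m)$) yields
$$\sum_{i=1}^N m_k^i(\RR^m) \;\leq\; \EE[g(X_{k+1})|\clf_k]-g(X_k) \;-\; \EE\!\left[\int_0^1 \nabla g(\mathbf{x}(s))\cdot w\,ds\,\Big|\,\clf_k\right].$$
The main technical difficulty lies in the third term: a naive estimate $\|\nabla g\|_\infty|w|$ yields an $O(\sqrt{\step_{k+1}})$ contribution, whose averaged sum $\stepsum_n^{-1}\sum_k \sqrt{\step_{k+1}}$ diverges for the admissible step schedules (growing like $n^{\theta/2}$ when $\step_k=k^{-\theta}$). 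To eliminate this, decompose $\nabla g(\mathbf{x}(s))=\nabla g(X_k)+[\nabla g(\mathbf{x}(s))-\nabla g(X_k)]$; the mean-zero property of $U_{k+1}$ absorbs $\nabla g(X_k)\cdot\sigma(X_k)\sqrt{\step_{k+1}}U_{k+1}$ in conditional expectation, leaving $\nabla g(X_k)\cdot b(X_k)\step_{k+1}=O(\step_{k+1})$. For the remainder, applying Condition \ref{skolip} to $\mathbf{z}$ and the constant path $\tilde{\mathbf{z}}\equiv X_k$ gives $\sup_{s\le 1}|\mathbf{x}(s)-X_k|\le K|w|$, so $|\nabla g(\mathbf{x}(s))-\nabla g(X_k)|\le \|D^2 g\|_\infty K|w|$, and combined with $\EE[|w|^2|\clf_k]\le C\step_{k+1}$ this remainder also contributes only $O(\step_{k+1})$. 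Altogether,
$$\sum_{i=1}^N m_k^i(\RR^m)\;\leq\; \EE[g(X_{k+1})|\clf_k]-g(X_k)+C'\step_{k+1}$$
for some deterministic $C'<\infty$.

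Summing over $k=0,\ldots,n-1$ and dividing by $\stepsum_n$ gives
$$\sum_{i=1}^N \mu_n^i(\RR^m)\;\leq\; \frac{1}{\stepsum_n}\sum_{k=0}^{n-1}\bigl(\EE[g(X_{k+1})|\clf_k]-g(X_k)\bigr)+C'.$$
Since $g\in C_b^2(\RR^m)\subset C_b^2(G)$, Lemma \ref{lemma:telecgc} shows that the first term on the right converges to $0$ almost surely. Hence $\sup_n \sum_i \mu_n^i(\RR^m)<\infty$ a.s., and the lemma follows from the nonnegativity of each $\mu_n^i$.
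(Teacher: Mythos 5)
Your proof is correct, and it takes a genuinely more direct route than the paper's. Both arguments hinge on the function $g$ from Lemma \ref{lemma:g} and the chain-rule identity \eqref{equ:g}, but they diverge after that. The paper uses \eqref{equ:g} to obtain the per-step bound $L_k^i \le \kappa_1(\sqrt{\step_{k+1}}|U_{k+1}|+\step_{k+1})$, then establishes the measure-level inequality $\langle 1, m_k^i\rangle \le \langle D_i g, m_k^i\rangle + O(\step_{k+1})$ --- which requires a careful estimate of $|\Pi_k^t - x^k(s_k^i)|$ to show that the intermediate point $\Pi_k^t$ remains close to the active face --- and finally invokes the full decomposition of Lemma \ref{lemma:telesum} (including control of the residual $\xi_n^g$) to bound $\sup_n \sum_i \mu_n^i(D_i g)$. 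You instead take conditional expectations of the chain-rule inequality directly: the term $\sum_i L_k^i$ becomes $\sum_i m_k^i(\RR^m)$ because $\EE_{X_k}[L_k^i] = \EE[L_k^i\mid\clf_k]$, and the problematic $O(\sqrt{\step_{k+1}})$ drift term is killed by writing $\nabla g(\mathbf x(s)) = \nabla g(X_k) + [\nabla g(\mathbf x(s)) - \nabla g(X_k)]$, using $\EE[U_{k+1}]=0$ for the first piece and the second-order estimate $|\nabla g(\mathbf x(s))-\nabla g(X_k)|\le K\|D^2 g\|_\infty |w|$ together with $\EE[|w|^2\mid\clf_k]=O(\step_{k+1})$ for the second. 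This collapses everything to $\sum_i m_k^i(\RR^m) \le \EE[g(X_{k+1})-g(X_k)\mid\clf_k] + C'\step_{k+1}$, after which only Lemma \ref{lemma:telecgc} is needed. Your route bypasses the intermediate quantities $\langle D_i g, m_k^i\rangle$, the analysis of $\Pi_k^t$ versus $x^k(s_k^i)$, and Lemma \ref{lemma:telesum} entirely; it is shorter and more self-contained, at the cost of not establishing the estimate $\langle 1, m_k^i\rangle \le \langle D_i g, m_k^i\rangle + O(\step_{k+1})$ (which, however, the paper does not seem to reuse). One small point worth making explicit if you wrote this up: the bound $\sup_{s\le 1}|\mathbf x(s)-X_k|\le K|w|$ uses causality of the Skorokhod map to replace the unbounded path $t\mapsto X_k+tw$ on $[1,\infty)$ by a bounded one before applying Condition \ref{skolip} --- the paper takes this for granted in \eqref{supxv}, and so can you, but it deserves a mention.
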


\begin{proof}
Let $g \in C^2_b (\RR^m)$ be as in Lemma \ref{lemma:g}. Then  for fixed $(u,v,r)\in \RR^m\times
G\times(0,\infty)$ and with notation as introduced above Lemma
\ref{lemma:telesum}, \be \label{equ:g}\begin{split} g({\bf
x}(1))=&g(v) + \int_0^1 [\nabla g ({\bf x}(s))\cdot (b(v)r +
\sigma(v)\sqrt{r}u)]ds+\sum_{i=1}^N \int_0^1 d_i \cdot\nabla g
({\bf x}(s)) \alpha_i(s)d|{\bf y}|(s)\\
\end{split}\ee
Since $\alpha_i(s)$ is nonzero only when ${\bf x}(s) \in F_i$, and $ \langle
\nabla g(x), d_i \rangle \ge 1$, for all  $x \in F_i$, $i \in
\{1,...,N\}$,
we
have \be\label{equ:g1}
\begin{split} \sum_{i=1}^N{\bf L^i}(v,u,r) &=
\sum_{i=1}^N
\int_0^1 \alpha_i(s)d|{\bf y}|(s)\\
&\le \sum_{i=1}^N \int_0^1 d_i \cdot\nabla g ({\bf x}(s))
\alpha_i(s)d|{\bf y}|(s)\\
&\le | g({\bf x}(1))-g(v)|+||\nabla g||_\infty|b(v)r + \sigma(v)\sqrt{r}u|\\
&\le ||\nabla g||_\infty|{\bf x}(1)-v|+||\nabla g||_\infty|b(v)r + \sigma(v)\sqrt{r}u|\\
&\le ||\nabla g||_\infty(K+1)|b(v)r + \sigma(v)\sqrt{r}u|,
\end{split}\ee
where the second inequality uses \eqref{equ:g}, and the last inequality uses the
Lipschitz property of the Skorokhod map.

Let $\kappa_1 = ||\nabla g||_\infty(K+1) a_1$, then from (\ref{equ:g1}) we have for $i\in
\{1,...,N\}$, \be \label{Lki} L_k^i\le \kappa_1 \left (\sqrt{\step_{k+1}}
|U_{k+1}| + \step_{k+1}\right ).
\ee Also note that,
\be\label{supxv} \sup_{t\in
[0,1]}|{x}^k(t)-X_k| \le K|b(X_k)\lambda_{k+1} + \sigma(X_k)\sqrt{\lambda_{k+1}}U_{k+1}|\le  K a_1
\sqrt{\lambda_{k+1}} |U_{k+1}| + K a_1 \lambda_{k+1},\ee
and for $t\in [0,1]$, \be \label{equ:g2} |\Pi_k^t-X_k| \le
t|x^k(1)-v|+(1-t)|z^k(1)-v|\le (K+1)a_1\step_{k+1}+
(K+1)a_1\sqrt{\step_{k+1}} |U_{k+1}|,\ee
where ${x}^k(t)= {\bf x}(U_{k+1},X_k,\step_{k+1}|t)$, ${z}^k(t)=
{\bf z}(U_{k+1},X_k,\step_{k+1}|t)$.
Combining \eqref{Lki}-\eqref{equ:g2} we have that
\be \label{ins451}
  \EE_{X_k}(|\Pi_k^t-x^k(s^i_k)|L_k^i)\leq
(2K+1)a_1\kappa_1 m\step_{k+1}+\varphi(\step_{k+1})\step_{k+1}, \ee
where $\varphi:(0,\infty)\rightarrow(0,\infty)$ is a bounded
function satisfying $\varphi(\alpha)\rightarrow 0$ as
$\alpha\rightarrow 0$.

Next note that $L_k^i$ is not equal to 0 only if there exists $s\in
[0,1]$ such that $\alpha_i^k(s)>0$, i.e., $x^k(s) \in F_i$,
where $\alpha_i^k(t)\equiv \alpha_i(U_{k+1},X_k,\step_{k+1}|t)$. And in
that case, \bes D_i g(\Pi_k^t) \ge D_i g(x^k(s))-||D^2 g||_\infty
|\Pi_k^t-x^k(s)|\geq 1-||D^2 g||_\infty|\Pi_k^t-x^k(s)|.\ees
Let $A^i_k=\{\omega: \text{there exists }s \in [0,1]\text{ such that }\alpha_i^k(s)>0\}$ and
\begin{subnumcases}{s^i_k(\omega)=}\nonumber
\inf\{s \in [0,1]: \alpha_i^k(s)>0\} & if $\omega \in A^i_k$,\\ \nonumber
1 & if $\omega \notin A^i_k$.
\end{subnumcases}
Then, from (\ref{ins451}),
 \bes \begin{split}
 \EE_{X_k}[D_i g(\Pi_k^t)L_k^i1_{A^i_k}] &\ge \EE_{X_k}[L_k^i1_{A^i_k}]-
 ||D^2 g||_\infty \EE_{X_k}[ |\Pi_k^t-x^k(s^i_k)|L_k^i1_{A^i_k}]\\
 &\ge \EE_{X_k}[L_k^i]-||D^2 g||_\infty ((2K+1)a_1\kappa_1 m\step_{k+1}+\varphi(\step_{k+1})\step_{k+1})
 \end{split} \ees
Thus we have
 \bes \begin{split}
 \langle D_i g, m_k^i \rangle &= \int_0^1 \EE_{X_k}[D_i g(\Pi_k^t)L_k^i]dt\\
 &=\int_0^1 \EE_{X_k}[D_i g(\Pi_k^t)L_k^i1_{A^i_k}]dt\\
 &\ge \langle 1,m^i_k\rangle-||D^2 g||_\infty\left ( (2K+1)a_1m\kappa_1\step_{k+1}+\varphi(\step_{k+1})\step_{k+1}\right).
 \end{split} \ees
Rearranging the terms, we have \bes
\langle 1, m_k^i \rangle \le \langle D_i g, m_k^i \rangle +||D^2
g||_\infty \left ( (2K+1)a_1m\kappa_1\step_{k+1} + \varphi(\step_{k+1})\step_{k+1}\right ).\ees
Summing over $k$ from 0 to $n-1$ and $i$ from 1 to $N$, we obtain
\be \label{equ:g3}
  \sum_{i=1}^N\langle 1,\mu^i_n\rangle\le\sum_{i=1}^N\langle
D_ig,\mu^i_n\rangle+N||D^2 g||_\infty\left ((2K+1)a_1\kappa_1 m+|\varphi|_\infty \right).
\ee
Using Lemma \ref{lemma:telesum}
\bes
  \sum_{i=1}^N\mu_n^i(D_ig)=\frac{1}{\stepsum_n}\sum_{k=0}^{n-1}\EE[g(X_{k+1})-g(X_k)|\clf_k]-\nu_n(\cla
g)-\xi_n^g. \ees
Since $g\in C_b^2(\RR^m)$, the second term on the right
side is bounded. Also from Lemma \ref{lemma:telecgc}, the first term converges to 0 as $n\rightarrow \infty$. Finally from Lemma \ref{lemma:telesum}, the third term is bounded, a.s.

From this it follows that
\bes
  \sup_n\sum_{i=1}^N\mu_n^i(D_ig)<\infty \quad \text{a.s.}
\ees
Result follows on using this observation in \eqref{equ:g3}.
\end{proof}

The following lemma will be used to show that for a.e. $\omega$, any limit point
of $\mu_n^i(\omega)$ is supported on $F_i$, $i=1,...,N$.

\begin{lemma}\label{lemma:face}
Fix $i \in \{1,...,N\}$. Let $\psi \in C_c^2(\RR^m)$ be such that
$\psi(x) \ge 0$ for all $x \in \RR^m$. Suppose that there is a $\epsilon >0$,
such that $\psi(x) =0$ if $\text{dist}(x, F_i) \le \epsilon$. Then
$$ \int \psi(x)\mu_n^i(dx) \rightarrow 0, \text{ a.s. as } n\rightarrow \infty.$$
\end{lemma}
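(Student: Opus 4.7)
The plan is to exploit that $L_k^i = 0$ off the event $A_k^i$ (on which the one-step trajectory $x^k$ touches $F_i$ at some time $s_k^i$, using the $s_k^i$ defined in the proof of Lemma \ref{lemma:precmp}), so that on $A_k^i$ we have $\dist(\Pi_k^t, F_i) \le |\Pi_k^t - x^k(s_k^i)|$. Since $\psi$ vanishes on the $\epsilon$-neighborhood of $F_i$, the Chebyshev-type inequality
$$1_{\{|\Pi_k^t - x^k(s_k^i)| > \epsilon\}} \le \frac{|\Pi_k^t - x^k(s_k^i)|^2}{\epsilon^2}$$
gives the pointwise bound
$$\psi(\Pi_k^t) L_k^i \le \frac{||\psi||_\infty}{\epsilon^2}|\Pi_k^t - x^k(s_k^i)|^2 L_k^i.$$

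For the right-hand side I would re-use the per-step estimates already established in the proof of Lemma \ref{lemma:precmp}. Combining \eqref{supxv} with \eqref{equ:g2}, there is a deterministic $\beta \in (0,\infty)$ independent of $k$ and of $X_k$ with $|\Pi_k^t - x^k(s_k^i)| \le \beta (\sqrt{\step_{k+1}}|U_{k+1}| + \step_{k+1})$, while \eqref{Lki} gives $L_k^i \le \kappa_1(\sqrt{\step_{k+1}}|U_{k+1}| + \step_{k+1})$. Since $U_{k+1}$ is independent of $\clf_k \supset \sigma(X_k)$ and $\EE|U_1|^3 < \infty$ by Condition \ref{expint}, taking conditional expectations yields $\EE_{X_k}[|\Pi_k^t - x^k(s_k^i)|^2 L_k^i] \le \kappa_2 \step_{k+1}^{3/2}$ for a deterministic $\kappa_2$ and $k$ large enough that $\step_{k+1} \le 1$. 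Integrating in $t \in [0,1]$ then yields the pathwise bound $\langle \psi, m_k^i \rangle \le \kappa_3 \step_{k+1}^{3/2}$, with $\kappa_3 = ||\psi||_\infty \kappa_2/\epsilon^2$.

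Summing,
$$\mu_n^i(\psi) \le \kappa_3 \frac{1}{\stepsum_n}\sum_{k=0}^{n-1} \step_{k+1}^{3/2},$$
and since $\step_{k+1}^{3/2}/\step_{k+1} = \sqrt{\step_{k+1}} \to 0$, Kronecker's lemma (applied to weights $\step_{k+1}$ with partial sums $\stepsum_n \to \infty$) yields $\stepsum_n^{-1}\sum_{k=0}^{n-1} \step_{k+1}^{3/2} \to 0$; because the bound on $\langle \psi, m_k^i\rangle$ is deterministic, this convergence is in fact uniform in $\omega$, giving the claimed a.s.\ convergence. The main subtlety is the need for the quadratic Chebyshev factor: the linear estimate $\EE_{X_k}[|\Pi_k^t - x^k(s_k^i)| L_k^i] = O(\step_{k+1})$ from Lemma \ref{lemma:precmp} alone only yields $\mu_n^i(\psi) = O(1)$, so the extra $\sqrt{\step_{k+1}}$ gain obtained by squaring $|\Pi_k^t - x^k(s_k^i)|/\epsilon$ (at the cost of a $1/\epsilon^2$ prefactor) is exactly what is required to push the bound down to $o(1)$.
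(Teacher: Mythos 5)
Your proof is correct, and it takes a genuinely different --- and somewhat leaner --- route than the paper's. The paper observes that $\{\exists\,t,s:|\Pi_k^t-x^k(s)|>\epsilon\}\subset\{|U_{k+1}|\ge p_k\}$ with $p_k\asymp\epsilon/\sqrt{\step_{k+1}}$, decouples the indicator from $L_k^i$ via Cauchy--Schwarz, and then bounds $\PP(|U|\ge p_k)$ with Markov's inequality using a $j$-th moment for some $j>4$, giving a per-step contribution of order $\step_{k+1}^{j/4}$. You instead keep the product $\psi(\Pi_k^t)L_k^i$ together, dominate the indicator multiplicatively by $|\Pi_k^t-x^k(s_k^i)|^2/\epsilon^2$, and compute the resulting conditional expectation directly using only $\EE|U|^3<\infty$, getting $O(\step_{k+1}^{3/2})$. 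Both per-step estimates are $o(\step_{k+1})$ and hence both conclude via the same weighted-average argument; yours avoids the Cauchy--Schwarz split and uses lower moments of $U$ (though Condition \ref{expint} supplies all of them anyway), while the paper's gives a sharper per-step rate it does not actually need. Two small remarks. First, the final limit $\stepsum_n^{-1}\sum_{k<n}\step_{k+1}^{3/2}\to 0$ is a Toeplitz (regular summability) fact --- the weighted Ces\`aro average of the null sequence $\sqrt{\step_{k+1}}$ with weights $\step_{k+1}$ of divergent partial sums tends to zero --- rather than Kronecker's lemma, which would require convergence of $\sum\step_{k+1}^{3/2}/\stepsum_k$ and need not hold for every admissible step sequence. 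Second, you correctly restrict to $k$ large enough that $\step_{k+1}\le 1$; this should be made explicit as in the paper's introduction of $k_0$, with the finitely many remaining terms absorbed by the $1/\stepsum_n$ prefactor, but it does not affect the conclusion.
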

\begin{proof}
We have \be \label{equ:g4} \begin{split} \langle \psi, \mu_n^i \rangle
&=\frac{1}{\stepsum_n} \sum_{k=0}^{n-1}  \langle \psi, m_k^i \rangle\\
&=\frac{1}{\stepsum_n}\sum_{k=0}^{n-1} \int_0^1 \EE_{X_k}[\psi
(\Pi_k^t)L_k^i]dt\\
&=\frac{1}{\stepsum_n}\sum_{k=0}^{n-1} \int_0^1 \EE_{X_k}\int_0^1 \psi
(\Pi_k^t) \alpha_i^k(s) d|y^k|_s dt\\
&\le\frac{|\psi|_\infty}{\stepsum_n}\sum_{k=0}^{n-1} \EE_{X_k}
\int_0^1 \int_0^1 1_{\{|\Pi_k^t-{x}^k(s)|>\epsilon\}} \alpha_i^k(s) d|y^k|_s dt,
\end{split} \ee
where, recall that $\alpha_i^k(s)\equiv \alpha_i(U_{k+1},X_k,\step_{k+1}|s)$ and
${x}^k(s)= {\bf x}(U_{k+1},X_k,\step_{k+1}|s)$, ${y}^k(s)=
{\bf y}(U_{k+1},X_k,\step_{k+1}|s)$. The last inequality in the
above display follows from noting that $\alpha_i^k(s)>0$ only when
${x}^k(s)\in F_i$ and if for such a $s$, $|\Pi_k^t-{x}^k(t)|\le\epsilon$,we have by our choice of $\psi$ that $\psi(\Pi_k^t)=0$.

Next note that
\bes
  \{(t,s,\omega):|\Pi_k^t-{x}^k(s)|>\epsilon\}\subset
\{(t,s,\omega):|{x}^k(1)-{x}^k(s)|>\epsilon\}\cup\{(t,s,\omega):|{z}^k(1)-{x}^k(s)|>\epsilon\}, \ees where recall that ${z}^k(t)={\bf z}(U_{k+1},{X}_k,\step_{k+1}|t)$.

Also, from the Lipschitz property of the Skorokhod map, \bes |{x}^k(1)-{x}^k(s)| \le
Ka_1\step_{k+1}+Ka_1\sqrt{\step_{k+1}}|U_{k+1}|,\ees and \bes |{z}^k(1)-{x}^k(s)| \le |{z}^k(1)-X_k|+|X_k-{x}^k(s)| \le
(K+1)a_1\step_{k+1}+(K+1)a_1\sqrt{\step_{k+1}}|U_{k+1}|.\ees Thus
\[\{\omega:|\Pi_k^t-{x}^k(s)|>\epsilon \text{ for some }
t,s\in[0,1]\} \subset \{\omega:|U_{k+1}(\omega)|\ge p_k\},\] where
$p_k=\frac{\epsilon/((K+1)a_1)-\step_{k+1}}{\sqrt{\step_{k+1}}}$.
Using this observation in \eqref{equ:g4}, we have
 \bes
\begin{split} \langle \psi, \mu_n^i \rangle
&\le\frac{|\psi|_\infty}{\stepsum_n}\sum_{k=0}^{n-1} \EE_{X_k} \int_0^1
\int_0^1 1_{\{|U| \ge p_k\}} \alpha_i^k(s) d|y^k|_s dt\\
&\le\frac{|\psi|_\infty}{\stepsum_n}\sum_{k=0}^{n-1} \EE_{X_k}\left ( 1_{\{|U| \ge
p_k\}}\int_0^1  \alpha_i^k(s) d|y^k|_s \right )\\
&\le\frac{|\psi|_\infty}{\stepsum_n}\sum_{k=0}^{n-1}
\sqrt{\left(\EE_{X_k}\left(\int_0^1  \alpha_i^k(s) d|y^k|_s\right)^2\right)\PP (|U| \ge p_k)}.\\
\end{split} \ees
From  \eqref{equ:g1} it follows that for some
$\kappa_1\in (0,\infty)$, $\sup_k\EE_{X_k}\left(\int_0^1  \alpha_i^k(s) d|y^k|_s\right)^2\le \kappa_1$.
Also using Condition \ref{expint}, $\EE|U|^j<\infty \text{ for all }
j\ge 1$. Choose $k_0$ large enough so that $\step_{k+1}\le
\frac{\epsilon}{2(K+1)a_1}\text{ for all } k\ge k_0$.

Fix $j>4$, then
\bes
    \langle \psi,\mu^i_n\rangle \le
\frac{|\psi|_\infty}{\stepsum_n}\sqrt{\kappa_1}k_0+\frac{|\psi|_\infty}{\stepsum_n}\sum_{k=k_0}^{n-1}\sqrt{\kappa_1}(\EE|U|^j)^{1/2}p_k^{-j/2}.
\ees
The result now follows on observing that for some $\kappa_2\in (0,\infty)$,
$p_k^{-j/2}\le \kappa_2\step_{k+1}^{j/4}$ for all $k\ge k_0$.
\end{proof}


We are now ready to complete the proof of Theorem \ref{thm:cgce}.

\begin{proof}[Proof of Theorem \ref{thm:cgce}]
Fix $f \in C_c^2(G)$. Then such a function can be extended to a function in $C_c^2(\RR^m)$. We denote this function once more by $f$. Then from Lemma \ref{lemma:telesum}, \be
\label{telesum2} \frac{1}{\stepsum_n} \sum_{k=0}^{n-1}\EE[f(X_{k+1})
- f(X_k)|\clf_{k}]=\sum_{i=1}^N \mu_n^i (D_i f)+ \nu_n(\cla
f)+\xi_n^f. \ee From Lemmas
\ref{lemma:tight}, \ref{lemma:telesum}, \ref{lemma:telecgc} and
\ref{lemma:precmp}, there exists $\Omega_0 \in \clf$ such
that $\PP(\Omega_0)=1$ and for every $\omega \in \Omega_0$,
\begin{itemize}
 \item $\{\nu_n(\omega)\}_n$ is precompact in $\clp(G)$,
 \item $\{\mu^i_n(\omega)\}_n$ is precompact in $\clm_F(\bar{\RR}^m)$, for
every $i=1,...,N$,
 \item Left hand side of \eqref{telesum2} converges to 0,
 \item $\xi_n^f(\omega)$ converges to 0.
\end{itemize}
Fix a $\omega \in \Omega_0$ and let $\nu_\infty(\omega)$,
$\mu^i_\infty(\omega)$, $i=1,...,N$, be a subsequential limit of
$\nu_n(\omega)$ and $\mu^i_n(\omega)$, respectively. Then from
\eqref{telesum2} and the above observations, we have ( suppressing
$\omega$ ) \bes
    \nu_\infty(\cla f) +\sum_{i=1}^N \mu_\infty^i(D_i f)=0.
    \ees

To complete the proof, in view of Theorem \ref{thm:inv}, it suffices
to argue that \be \label{face} \int_{\RR^m}
1_{F_i^c}(x)\mu_\infty^i(\omega)(dx)=0. \ee By convergence of $\mu_n^i$ to
$\mu_\infty^i$, we have   for
every $\psi$ as in Lemma \ref{lemma:face},
$$ \int_{\RR^m} \psi(x)\mu_\infty^i(\omega)(dx)=0.$$
Therefore
\bes
  \int_{\RR^m}1_{F_i^{\epsilon,r}}(x)\mu_\infty^i(dx)=0 \quad \forall \epsilon,r >0,
\ees where $F_i^{\epsilon,r}=\{x\in \RR^m| \text{ dist}(x,F_i)\ge
\epsilon \text{ and } |x|\le r\}. $ The equality in \eqref{face} now
follows on sending $\epsilon\rightarrow 0$ and $r\rightarrow
\infty$.
\end{proof}
\subsection{Proof of Theorem \ref{thm:unbddmom}}\label{sec:thmunbdd}
%
Recall $c$ from Lemma \ref{propT} and $\varpi$ from (\ref{ins204}).  Fix $\zeta \in (0, \varpi c)$.  We will prove the theorem with such a choice of $\zeta$.
Consider an $f$ as in the statement of the theorem.  Then there exists constant $\kappa_1$ such that $|f(x)|\le \kappa_1 e^{\zeta |x|}$
Without loss of generality, we assume $f \ge 0$.

From Theorem \ref{thm:cgce}, for any $L >0$, we have
$$\int(f\wedge L)d\nu_n \rightarrow \int(f\wedge L)d\nu\quad \text{a.s.}$$
In order to prove the theorem, it suffices to show that
\[\int(f\wedge L)d\nu_n \rightarrow \int f d\nu_n, \mbox{ and } \int(f\wedge L)d\nu \rightarrow \int fd\nu, \mbox{ as } L\rightarrow \infty.\]
First, consider
\begin{equation*}
\begin{split}
\sup_n\left[\int f d\nu_n-\int(f\wedge L)d\nu_n\right]&\le \sup_n \int 1_{f>L}f d\nu_n\\
&\le  \sup_n\left(\nu_n^{1/p}(f>L)[\nu_n(f^q)]^{1/q}\right),
\end{split}
\end{equation*}
where $p,q \in (1, \infty)$ are such that $p^{-1} + q^{-1} = 1$ and the last inequality follows from H\"older's inequality.
Choose $q > 1$ such that $\zeta q <\varpi c$, then from Lemma \ref{lemma:tight} we have
\be \label{ins945}\sup_n[\nu_n(f^q)]^{1/q}\le \kappa_1\sup_n[\int e^{\zeta q|x|}\nu_n(dx)]^{1/q} \le \kappa_1\sup_n \nu_n^{1/q}(V) < \infty, \quad \text{a.s.}\ee
Using Markov's Inequality, we have
$$\nu_n^{1/p}(f>L)\le\frac{\nu_n^{1/p}(f)}{L^{1/p}},$$
which using (\ref{ins945}) converges to 0 as $L$ goes to infinity. Combining the above three displays, we have
\be \label{ins947} \sup_n\left[\int f d\nu_n-\int(f\wedge L)d\nu_n\right]\rightarrow 0, \quad\text{a.s.  as } L \rightarrow \infty.\ee

Also, from Fatou's lemma we have, for a.e. $\omega$,
\begin{equation*}
\begin{split}
\int f d\nu-\int(f\wedge L)d\nu&=\int (f -f\wedge L)d\nu\\
&\le \liminf_n\int (f -f\wedge L)d\nu_n\\
&\le \sup_n\int (f -f\wedge L)d\nu_n.
\end{split}
\end{equation*}
Using (\ref{ins947}) the last expression converges to $0$ as $L \to \infty$.  The result follows.

\section{Proof of Theorem \ref{thm:rate}}\label{ins2107}
We begin with a few preliminary lemmas.

\begin{lemma} \label{lemma:decomp}If $\phi \in C^2(G)$, then
\bes \begin{split} \stepsum_n \nu_n(\cla \phi) = \sum_{k=1}^n
\step_k \cla \phi(X_{k-1})= Z_n^{(0)}-(N_n + \sum_{i=1}^4
Z_n^{(i)}+\sum_{i=1}^4 Y_n^{(i)})
\end{split}\ees
with
\beq Z_n^{(0)}&=&\phi(X_n)-\phi(X_0),\\
N_n&=&\sum_{k=1}^n \sqrt{\step_k} \langle \nabla \phi(X_{k-1}),
\sigma(X_{k-1}) U_k\rangle,\\
Z_n^{(1)}&=&\frac{1}{2} \sum_{k=1}^n \step_k^2 b(X_{k-1})^T
D^2\phi(X_{k-1})b(X_{k-1}),\\
Z_n^{(2)}&=&\sum_{k=1}^n \step_k^{3/2} b(X_{k-1})^T
D^2\phi(X_{k-1})\sigma(X_{k-1}) U_k,\\
Z_n^{(3)}&=&\frac{1}{2} \sum_{k=1}^n \step_k [(\sigma(X_{k-1})
U_k)^T D^2\phi(X_{k-1})(\sigma(X_{k-1}) U_k)\\
&&-\EE((\sigma(X_{k-1})
U_k)^T D^2\phi(X_{k-1})(\sigma(X_{k-1}) U_k)|\clf_{k-1})],\\
Z_n^{(4)}&=&\sum_{k=1}^n R_2(X_{k-1},X_{k}),\eeq
and
\beq
Y_n^{(1)}&=&\sum_{k=1}^n  \langle \nabla \phi(X_{k-1}),
y_{k-1}\rangle,\\
Y_n^{(2)}&=&\frac{1}{2} \sum_{k=1}^n  y_{k-1}^T
D^2\phi(X_{k-1})y_{k-1},\\
Y_n^{(3)}&=&\sum_{k=1}^n  \step_k b(X_{k-1})^T
D^2\phi(X_{k-1})y_{k-1},\\
Y_n^{(4)}&=&\sum_{k=1}^n  \sqrt{\step_k} y_{k-1}^T
D^2\phi(X_{k-1})\sigma(X_{k-1}) U_k, \eeq where
$R_2(x,y)=\phi(y)-\phi(x)-\langle \nabla \phi(x),
y-x\rangle-\frac{1}{2} (y-x)^T D^2\phi(x)(y-x)$, and $y_k={\bf
y}(U_{k+1},X_k,\step_{k+1}|1)$.
\end{lemma}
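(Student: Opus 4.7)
The proof is an algebraic decomposition that follows from a second-order Taylor expansion of $\phi$ along the Euler scheme, with careful separation of the reflection increment from the noise/drift increment. Here is how I would organize it.

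First, using the Euler recursion \eqref{scheme2012} together with the representation \eqref{refproc} of the reflection, write the one-step increment as
\[
X_k - X_{k-1} \;=\; b(X_{k-1})\,\step_k \;+\; \sigma(X_{k-1})\sqrt{\step_k}\,U_k \;+\; y_{k-1},
\]
where $y_{k-1}={\bf y}(U_k,X_{k-1},\step_k\,|\,1)$ is the reflection piece. Set $\Delta_k = b(X_{k-1})\step_k + \sigma(X_{k-1})\sqrt{\step_k}\,U_k$ so that $X_k - X_{k-1} = \Delta_k + y_{k-1}$.

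Next, by definition of $R_2$, apply second-order Taylor expansion
\[
\phi(X_k)-\phi(X_{k-1}) \;=\; \langle\nabla\phi(X_{k-1}),\, \Delta_k + y_{k-1}\rangle \;+\; \tfrac{1}{2}(\Delta_k + y_{k-1})^{T} D^{2}\phi(X_{k-1})(\Delta_k + y_{k-1}) \;+\; R_2(X_{k-1},X_k).
\]
Expand the quadratic form into four pieces: the pure $\Delta_k$ quadratic, the cross term $\Delta_k^{T}D^2\phi\, y_{k-1}$ (which, after substituting the definition of $\Delta_k$ and using that the $\sqrt{\step_k}\,\sigma U_k$ piece will be absorbed into $Y_n^{(4)}$ and the $b\step_k$ piece into $Y_n^{(3)}$), and the pure $y_{k-1}$ quadratic giving $Y_n^{(2)}$. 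Inside the pure $\Delta_k$ quadratic, further expand
\[
\Delta_k^{T}D^2\phi(X_{k-1})\Delta_k \;=\; \step_k^{2}\,b^{T}D^2\phi\,b \;+\; 2\step_k^{3/2}\,b^{T}D^2\phi\,\sigma U_k \;+\; \step_k\,(\sigma U_k)^{T}D^2\phi\,(\sigma U_k),
\]
contributing $Z_n^{(1)}$, $Z_n^{(2)}$, and (after centering) $Z_n^{(3)}$.

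Now for the key absorption step. For the linear term $\langle\nabla\phi, \Delta_k\rangle = \step_k\,\langle\nabla\phi,b\rangle + \sqrt{\step_k}\,\langle\nabla\phi,\sigma U_k\rangle$, the first piece will contribute the drift part of $\step_k\cla\phi(X_{k-1})$, and the second piece yields $N_n$ after summation. For the $\step_k(\sigma U_k)^{T}D^2\phi(\sigma U_k)/2$ term, split it as its $\clf_{k-1}$-conditional expectation plus its centered part: since $\EE[U_kU_k^{T}\mid\clf_{k-1}]=I$, the conditional expectation equals $\tfrac{1}{2}\step_k\,\sigma'(X_{k-1})D^2\phi(X_{k-1})\sigma(X_{k-1})$, which combines with the drift term to form $\step_k\cla\phi(X_{k-1})$; the centered part is exactly $Z_n^{(3)}$. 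The linear reflection term $\langle\nabla\phi,y_{k-1}\rangle$ produces $Y_n^{(1)}$.

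Finally, sum the Taylor identity from $k=1$ to $n$. The left-hand side telescopes to $\phi(X_n)-\phi(X_0)=Z_n^{(0)}$, and the right-hand side has been organized exactly into $\sum_{k=1}^n\step_k\cla\phi(X_{k-1}) = \stepsum_n\nu_n(\cla\phi)$ plus $N_n+\sum_{i=1}^4 Z_n^{(i)}+\sum_{i=1}^4 Y_n^{(i)}$. Rearranging yields the claimed identity. There is no real obstacle here beyond careful bookkeeping; the only point requiring a moment's thought is matching the convention $\tfrac{1}{2}\sigma'D^2\phi\,\sigma$ in $\cla\phi$ with $\tfrac{1}{2}\EE[(\sigma U_k)^{T}D^2\phi(\sigma U_k)\mid\clf_{k-1}]$, which is immediate from $\EE[U_kU_k^{T}\mid\clf_{k-1}]=I$.
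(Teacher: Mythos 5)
Your proof is correct and follows essentially the same route as the paper: a second-order Taylor expansion of $\phi(X_k)-\phi(X_{k-1})$, with the increment split into the drift-plus-noise piece and the reflection piece $y_{k-1}$, the quadratic form expanded term by term, the generator $\step_k\cla\phi$ recovered from the drift linear term plus the conditional expectation of the noise quadratic (via $\EE[U_kU_k^T\mid\clf_{k-1}]=I$), and a telescoping sum over $k$. This matches the paper's argument; the only notational caveat worth keeping in mind is that $\sigma'D^2\phi\,\sigma$ in the definition of $\cla\phi$ is to be read as the trace, which is exactly what $\EE[(\sigma U_k)^TD^2\phi(\sigma U_k)\mid\clf_{k-1}]$ produces.
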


\begin{proof} Denote $\delta \phi(X_k)= \phi(X_k)- \phi(X_{k-1})$
and $\delta X_k=X_k-X_{k-1}$. We deduce from \eqref{scheme2012} that
\bes
\begin{split}
\delta\phi(X_k)=&\langle \nabla\phi(X_{k-1}),\delta
X_k\rangle+\frac{1}{2} \delta X_k^T D^2\phi(X_{k-1})\delta
X_k+R_2(X_{k-1},X_{k})\\
=&\langle \nabla \phi(X_{k-1}), y_{k-1}\rangle + \step_k \cla
\phi(X_{k-1}) + \sqrt{\step_k} \langle \nabla \phi(X_{k-1}),
\sigma(X_{k-1}) U_k\rangle\\
 &+ \frac{1}{2}  y_{k-1}^T
D^2\phi(X_{k-1})y_{k-1} + \frac{1}{2}  \step_k^2 b(X_{k-1})^T
D^2\phi(X_{k-1})b(X_{k-1})\\
&+\frac{1}{2} \step_k [(\sigma(X_{k-1}) U_k)^T
D^2\phi(X_{k-1})(\sigma(X_{k-1}) U_k)-\EE((\sigma(X_{k-1})U_k)^T
D^2\phi(X_{k-1})(\sigma(X_{k-1}) U_k)|\clf_{k-1})]\\&+\step_k
b(X_{k-1})^T D^2\phi(X_{k-1})y_{k-1}+ \step_k^{3/2} b(X_{k-1})^T
D^2\phi(X_{k-1})\sigma(X_{k-1}) U_k\\&+ \sqrt{\step_k} y_{k-1}^T
D^2\phi(X_{k-1})\sigma(X_{k-1}) U_k+
R_2(X_{k-1},X_{k}).\\
\end{split}
\ees The lemma follows by summing the above equality over $k=1,...,n$
and rearranging the terms.
\end{proof}

\begin{lemma} \label{lemma:normal}
Let $W: G \rightarrow \RR$ be a continuous function such that
$\sup_{n \in \NN} \nu_n(W) <\infty$, a.s. Let $\phi \in C^1(G)$, be
such that $\lim_{|x| \rightarrow \infty}
 |\nabla \phi(x)|^2/ W(x)=0$. Then \bes
\frac{1}{\sqrt{\stepsum_n}}\sum_{k=1}^n \sqrt{\step_k} \langle
\nabla \phi(X_{k-1}), \sigma(X_{k-1}) U_k\rangle
\xrightarrow[]{\cll} \cln\left(0, \int_G |\sigma^T\nabla
\phi|^2d\nu\right).\ees
\end{lemma}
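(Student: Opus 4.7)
The plan is to apply a martingale central limit theorem to the triangular array
\[
\xi_{n,k} \;=\; \frac{\sqrt{\step_k}}{\sqrt{\stepsum_n}}\,\langle \nabla\phi(X_{k-1}),\sigma(X_{k-1}) U_k\rangle, \qquad k=1,\dots,n.
\]
Since $U_k$ is independent of $\clf_{k-1}$ with $\EE U_k = 0$ and $\EE U_k U_k^T = I$, the family $\{\xi_{n,k},\clf_k\}_{k=1}^n$ is a martingale difference array. I will verify the two standard hypotheses of the martingale CLT (Theorem~3.2 of Hall--Heyde): in-probability convergence of the conditional variances, and the conditional Lindeberg condition.

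For the conditional variances, using $\EE[U_kU_k^T\mid\clf_{k-1}]=I$,
\[
\sum_{k=1}^n \EE[\xi_{n,k}^2\mid\clf_{k-1}] \;=\; \frac{1}{\stepsum_n}\sum_{k=1}^n \step_k \,|\sigma(X_{k-1})^T\nabla\phi(X_{k-1})|^2 \;=\; \nu_n(|\sigma^T\nabla\phi|^2).
\]
The function $|\sigma^T\nabla\phi|^2$ is continuous (by Condition~\ref{coeff} and $\phi\in C^1$), and boundedness of $\sigma$ gives $|\sigma^T\nabla\phi|^2 \le a_1^2 |\nabla\phi|^2 = o(W)$ at infinity. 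Coupled with the hypothesis $\sup_n \nu_n(W)<\infty$ a.s., this yields a.s.\ uniform integrability of $|\sigma^T\nabla\phi|^2$ against the family $\{\nu_n\}$. Combined with the a.s.\ weak convergence $\nu_n\Rightarrow\nu$ from Theorem~\ref{thm:cgce}, the truncation argument already used in the proof of Theorem~\ref{thm:unbddmom} produces $\nu_n(|\sigma^T\nabla\phi|^2)\to \nu(|\sigma^T\nabla\phi|^2)$ a.s., and in particular the limit is finite.

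For the conditional Lindeberg condition, fix $\epsilon>0$ and set $v_k = \sigma(X_{k-1})^T\nabla\phi(X_{k-1})$. Since $|\xi_{n,k}| \le \sqrt{\step_k/\stepsum_n}\,|v_k||U_k|$ and $\step_k\le\step_0$, the event $\{|\xi_{n,k}|>\epsilon\}$ is contained in $\{|v_k||U_k|>\epsilon\sqrt{\stepsum_n/\step_0}\}$. Splitting on whether $|v_k|\le T$ or $|v_k|>T$ and bounding $(v_k\cdot U_k)^2 \le |v_k|^2|U_k|^2$,
\[
\sum_{k=1}^n \EE[\xi_{n,k}^2\mathbf{1}_{\{|\xi_{n,k}|>\epsilon\}}\mid\clf_{k-1}] \;\le\; T^2 \sup_k \EE\bigl[|U_k|^2\mathbf{1}_{\{|U_k|>\epsilon\sqrt{\stepsum_n/\step_0}/T\}}\bigr] \;+\; m\,\nu_n(|\sigma^T\nabla\phi|^2\mathbf{1}_{\{|\sigma^T\nabla\phi|>T\}}).
\]
By Condition~\ref{expint} the first term tends to $0$ as $n\to\infty$ for each fixed $T$; by the same UI argument as in the previous step the second term satisfies $\limsup_n \nu_n(|\sigma^T\nabla\phi|^2\mathbf{1}_{\{|\sigma^T\nabla\phi|>T\}})\to 0$ a.s.\ as $T\to\infty$. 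Letting $n\to\infty$ and then $T\to\infty$ verifies the Lindeberg condition, and the martingale CLT delivers the claimed convergence in distribution to $\cln(0,\nu(|\sigma^T\nabla\phi|^2))$.

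The main obstacle is establishing the a.s.\ uniform integrability of $|\sigma^T\nabla\phi|^2$ (and of its truncated tails) against the random empirical measures $\nu_n$; the growth hypothesis $|\nabla\phi|^2/W\to 0$ together with the a.s.\ bound $\sup_n\nu_n(W)<\infty$ is precisely what makes the truncation argument go through, in the same spirit as in the proof of Theorem~\ref{thm:unbddmom}.
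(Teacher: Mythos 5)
Your proof is correct and follows precisely the route the paper prescribes: set up the martingale difference array, verify convergence of the conditional quadratic variation to $\nu(|\sigma^T\nabla\phi|^2)$ via Theorem~\ref{thm:cgce} together with the uniform integrability supplied by the hypothesis $\sup_n\nu_n(W)<\infty$ and $|\nabla\phi|^2=o(W)$, check the conditional Lindeberg condition by the same truncation, and invoke the Hall--Heyde martingale CLT. This is exactly the ``along the lines of Proposition 2 of \cite{LambertonPage02}'' argument whose details the paper leaves to the reader, so you have simply filled in what was omitted.
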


\begin{proof}This lemma follows from Theorem \ref{thm:cgce} using the martingale central limit theorem, along the lines of Proposition 2 of
\cite{LambertonPage02}. Details are left to the reader.
\end{proof}

\begin{lemma} \label{lemma:Zn4}Under the assumptions of Theorem
\ref{thm:rate}(b), we have, \bes
\frac{Z_n^{(4)}}{\stepsum_n^{(3/2)}}\xrightarrow[]{\PP}
\frac{1}{6}\int_G\int_{\RR^m}D^3\phi(x)(\sigma(x)u)^{\otimes 3}
\mu(du)\nu(dx),\ees as $n\rightarrow \infty$.
\end{lemma}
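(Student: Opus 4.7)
The plan is to apply a third-order Taylor expansion to $R_2$, expand the resulting cubic tensor according to the decomposition $\delta X_k = \eta_k + y_{k-1}$ where $\eta_k := b(X_{k-1})\step_k + \sigma(X_{k-1})\sqrt{\step_k}U_k$, and isolate the leading contribution at order $\step_k^{3/2}$. Since $\phi \in C^3(G)$ with $D^3\phi$ bounded and Lipschitz, Taylor's theorem yields
\bes
R_2(X_{k-1}, X_k) = \frac{1}{6}\, D^3\phi(X_{k-1})(\delta X_k)^{\otimes 3} + R_3(X_{k-1}, X_k), \qquad |R_3(X_{k-1}, X_k)| \le \kappa\,|\delta X_k|^4.
\ees
Any term in the expansion of $(\delta X_k)^{\otimes 3}$ containing a drift factor $b(X_{k-1})\step_k$ is pointwise $O(\step_k^2)$, and $\sum_k \EE|R_3(X_{k-1}, X_k)| = O(\sum_k \step_k^2)$ using Condition \ref{expint}; both contribute $o(\stepsum_n^{(3/2)})$ after normalization.

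\textbf{Main noise term.} The pure noise contribution is $\frac{1}{6}\sum_{k=1}^n \step_k^{3/2}\xi_k$, with $\xi_k := D^3\phi(X_{k-1})(\sigma(X_{k-1})U_k)^{\otimes 3}$. Split $\xi_k = F(X_{k-1}) + (\xi_k - F(X_{k-1}))$, where $F(x) := \int_{\RR^m} D^3\phi(x)(\sigma(x)u)^{\otimes 3}\mu(du) = \EE[\xi_k\mid\clf_{k-1}]$ is bounded and continuous. The martingale $M_n := \sum_{k=1}^n \step_k^{3/2}(\xi_k - F(X_{k-1}))$ satisfies $\EE\langle M\rangle_n \le \kappa\,\stepsum_n^{(3)}$. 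Since $\stepsum_n^{(3)} \le (\sup_k \step_k^{3/2})\,\stepsum_n^{(3/2)}$ and $\stepsum_n^{(3/2)}\to\infty$ in the slowly decreasing regime, $\stepsum_n^{(3)}/(\stepsum_n^{(3/2)})^2 \to 0$, so $M_n/\stepsum_n^{(3/2)} \to 0$ in $L^2$. The deterministic piece $(\stepsum_n^{(3/2)})^{-1}\sum_{k=1}^n \step_k^{3/2}F(X_{k-1})$ converges a.s.\ to $\nu(F)$ by Abel summation applied to the a.s.\ limit $(\stepsum_n)^{-1}\sum_k \step_k F(X_{k-1}) \to \nu(F)$ guaranteed by Theorem \ref{thm:cgce}.

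\textbf{Reflection terms (main obstacle).} The remaining terms in the expansion of $(\delta X_k)^{\otimes 3}$ contain at least one factor of $y_{k-1}$. By \eqref{Lki}, $|y_{k-1}| \le \kappa(\sqrt{\step_k}|U_k| + \step_k)$, so naively each such term is already of size $\step_k^{3/2}$ -- the same order as the main term -- and a genuine gain must come from the boundary structure. The third-order condition $D^3_{\cdot jk}\phi(x)\cdot d_i = 0$ for $x \in F_i$ in \eqref{DD2D3}, combined with Lipschitz continuity of $D^3\phi$, provides the required cancellation. Writing $y_{k-1} = \sum_{i=1}^N d_i L^i_{k-1}$, on the event $\{L^i_{k-1} > 0\}$ the continuous path $x^{k-1}(\cdot)$ visits $F_i$, so \eqref{supxv} yields $\dist(X_{k-1}, F_i) \le Ka_1(\sqrt{\step_k}|U_k| + \step_k)$; Lipschitz continuity of $D^3\phi$ together with the boundary condition then gives $|D^3_{\cdot jk}\phi(X_{k-1})\cdot d_i| \le \kappa\,\dist(X_{k-1}, F_i)$ on that event. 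Each reflection term thus picks up an extra $\sqrt{\step_k}|U_k|$ factor beyond its naive magnitude and is genuinely $o(\step_k^{3/2})$ in expectation (using the exponential moments of $|U_k|$ from Condition \ref{expint}), hence vanishes after summation and normalization. This step exactly parallels the use of the second-order condition \eqref{DD2} in the fast-decreasing case of Theorem \ref{thm:rate}(a). Combining these contributions yields the lemma.
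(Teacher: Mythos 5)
Your proof is essentially correct and follows the same strategy as the paper: Taylor-expand $R_2$ to third order with quartic remainder, split $\delta X_k = \eta_k + y_{k-1}$, extract the pure noise cubic $\step_k^{3/2}D^3\phi(\sigma U_k)^{\otimes 3}$ as the leading term, handle its fluctuation by a martingale $L^2$ estimate using $\stepsum_n^{(3)}/(\stepsum_n^{(3/2)})^2\to 0$, and show all reflection terms are $o(\step_k^{3/2})$ in expectation by combining \eqref{Lki}, the distance bound \eqref{supxv}, Lipschitz continuity of $D^3\phi$, and the boundary condition \eqref{DD2D3}, exactly as the paper does. The one place where your argument diverges is the treatment of the deterministic piece $(\stepsum_n^{(3/2)})^{-1}\sum\step_k^{3/2}F(X_{k-1})$: you deduce its convergence to $\nu(F)$ by Abel summation from the $\step_k$-weighted limit of Theorem \ref{thm:cgce}, whereas the paper applies Theorem \ref{thm:cgce} directly to the reweighted empirical measure $\tilde{\nu}_n = (\stepsum_n^{(3/2)})^{-1}\sum\step_k^{3/2}\delta_{X_{k-1}}$. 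Both routes rest on the same underlying extension of the ergodic theorem to general admissible weight sequences (as in Lamberton--Pag\`es), and your Abel summation argument is clean and correct under the natural monotonicity of $\{\step_k\}$; it makes the transfer of the a.s.\ limit slightly more self-contained than the paper's invocation of Theorem \ref{thm:cgce} with new weights. No gap.
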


\begin{proof}
The proof is similar to that of Lemma 10 of \cite{LambertonPage02}
except for the treatment of reflection terms. Using the notation above
Theorem \ref{thm:rate} and in Lemma \ref{lemma:decomp}, we have \be
\label{Rr}R_2(x,y)=\frac{1}{6}D^3\phi(x)(y-x)^{\otimes
3}+R_4(x,y),\ee with
$$|R_4(x,y)|\le \frac{L}{6}|y-x|^4,$$
where $L$ is the Lipschitz constant for $D^3\phi$. Hence \be
\label{R2rk}R_2(X_{k-1},X_k)=\frac{1}{6}D^3\phi(X_{k-1})(\delta
X_k)^{\otimes 3} +r_k,\ee with
$$|r_k| \le \frac{L}{6}|\delta X_k|^4\le \kappa_1 \step_k^2 (1+|U_k|^4), \quad k \in \NN,$$
for some $\kappa_1 \in (0,\infty)$. Since $\EE |U_k|^4 :=\mu_4 <
\infty$ from Condition \ref{expint}, we have
\[\EE \sum_{k=1}^n |r_k|
\le\kappa_1(1+\mu_4)\sum_{k=1}^n\step_k^2.\] From the assumption
$\lim_{n \rightarrow \infty}(1/\sqrt{\stepsum_n})\sum_{k=1}^n
\step_k^{3/2}=\tilde{\step}\in (0,+\infty]$, we deduce that $\lim_{n
\rightarrow \infty}\sum_{k=1}^n \step_k^{3/2}=+\infty$ and \be
\label{step2}\lim_{n \rightarrow \infty}\sum_{k=1}^n \step_k^2
/\stepsum_n^{(3/2)}=0. \ee Therefore, \be
\label{rlim}\frac{1}{\stepsum_n^{(3/2)}}\sum_{k=1}^n r_k
\xrightarrow[]{L^1}0.\ee

Now consider the first term on the right side of \eqref{R2rk}. \bes
\begin{split} D^3\phi(X_{k-1})(\delta X_k)^{\otimes
3}=&D^3\phi(X_{k-1})(\step_k
b(X_{k-1})+\sqrt{\step_k}\sigma(X_{k-1})U_k + y_{k-1})^{\otimes 3}\\
=&\step_k^{3/2}D^3\phi(X_{k-1})(\sqrt{\step_k}
b(X_{k-1})+\sigma(X_{k-1})U_k)^{\otimes 3} + f_k^{(1)}(X_{k-1},U_k)\\
=&\step_k^{3/2}D^3\phi(X_{k-1})(\sigma(X_{k-1})U_k)^{\otimes 3} +
f_k^{(2)}(X_{k-1},U_k)+ f_k^{(1)}(X_{k-1},U_k),
\end{split}\ees
where $f_k^{(1)}(X_{k-1},U_k)$ and $f_k^{(2)} (X_{k-1},U_k)$ are defined
through the second and third equalities, respectively.

Next observe that
\begin{itemize}
\item From the assumptions, we have $$|\step_k
b(X_{k-1})+\sqrt{\step_k}\sigma(X_{k-1})U_k|\le
a_1\sqrt{\step_k}(|U_k|+\sqrt{\step_0}).$$
\item From \eqref{refproc} and \eqref{Lki}, we have $y_{k-1}=\sum_{i=1}^N d_i L_{k-1}^i$ and
for some $\kappa_2 \in (0, \infty)$,
$$L_{k-1}^i \le \kappa_2\sqrt{\step_k}(|U_k|+1), \mbox{ for all } k \in \NN$$
\item The term $L_{k-1}^i$ is non zero only if there exists $s\in
[0,1]$ such that $x_{k-1}(s) \in F_i$, where ${x}_{k-1}(s)={\bf
x}(U_k, X_{k-1},\step_k|s)$. And in that case, we have from
\eqref{supxv}, the Lipschitz property of $D^3\phi$ and \eqref{DD2D3}
that, for some $\kappa_3 \in (0, \infty)$,
$$|D^3_{\cdot jk}\phi(X_{k-1}) \cdot d_i|\le
\kappa_3\sqrt{\step_k}(|U_k|+1), \quad\forall j,k.$$
\end{itemize}
Combining these estimates, we see that $\EE \sum_{k=1}^n
|f_k^{(1)}(X_{k-1},U_k)| \le\kappa_4\sum_{k=1}^n\step_k^2$. Using
\eqref{step2} we now have \be
\label{ylim}\frac{1}{\stepsum_n^{(3/2)}}\sum_{k=1}^n
f_k^{(1)}(X_{k-1},U_k) \xrightarrow[]{L^1}0.\ee

For the term $f_k^{(2)} (X_{k-1},U_k)$, using the boundedness of
$D^3\phi$, $b$, and $\sigma$, it can be easily checked that $\EE
|f_k^{(2)}(X_{k-1},U_k)| \le\kappa_5\step_k^2$. Thus
\[\EE \sum_{k=1}^n
|f_b(X_{k-1},U_k)| \le\kappa_5\sum_{k=1}^n\step_k^2,\] and so using
\eqref{step2} once again, we have \be
\label{blim}\frac{1}{\stepsum_n^{(3/2)}}\sum_{k=1}^n
f_b(X_{k-1},U_k) \xrightarrow[]{\PP}0.\ee

Let $\Theta(X_{k-1},U_k)=
D^3\phi(X_{k-1})(\sigma(X_{k-1})U_k)^{\otimes 3}$. Since $\sup_k \EE
|\Theta(X_{k-1},U_k)|^2 < \infty$ and  $\lim_{n
\rightarrow \infty} \stepsum_n^{(3)}/(\stepsum_n^{(3/2)})^2=0$, we
have \be \label{conlim}\frac{1}{\stepsum_n^{(3/2)}}\sum_{k=1}^n
\step_k^{3/2}[\Theta(X_{k-1},U_k)
-\EE(\Theta(X_{k-1},U_k)|\clf_{k-1})]\xrightarrow[]{L^2}0.\ee
Observe that $\EE(\Theta(X_{k-1},U_k)|\clf_{k-1})=J(X_{k-1})$, where
$J$ is given by $$J(x):=\int_{\RR^m}D^3\phi(x)(\sigma(x)u)^{\otimes
3} \mu(du).$$ Since $\stepsum_n^{(3/2)}\rightarrow \infty$ as $n\rightarrow \infty$, we can apply Theorem \ref{thm:cgce} to the measure
$\tilde{\nu}_n=\frac{1}{\stepsum_n^{(3/2)}}\sum_{k=1}^n
\step_k^{3/2}\delta_{X_{k-1}}$. Since $J$ is continuous and bounded,
we have $\lim_{n\rightarrow \infty}\tilde{\nu}_n(J)=\int J d\nu$
a.s., and the lemma follows on combining this fact with
\eqref{Rr}-\eqref{conlim}.
\end{proof}

We are now ready to prove Theorem \ref{thm:rate}.

{\bf Proof of Theorem \ref{thm:rate}}  The proof is similar as the proof of Theorem 9 of
\cite{LambertonPage02}, once again the main difference is in the
treatment of reflection terms. Using the notation of Lemma
\ref{lemma:decomp}, we first observe that, for any sequence of
positive numbers $\{a_n\}_{n \in \NN}$ such that $\lim_{n\rightarrow
\infty} a_n = \infty$, we have $Z_n^{(0)}/a_n\rightarrow0$ in
probability. This is because, from Lemma \ref{Lemma:bnd}, the
sequence $\{X_n\}_{n \in \NN}$ is tight, and consequently so is
$\{\phi(X_n)\}_{n \in \NN}$ as well.

We also derive from the definitions of $Z_n^{(1)}$, $Z_n^{(2)}$ and
$Z_n^{(3)}$ the inequalities \be \EE|Z_n^{(1)}| \le \kappa_1
\sum_{k=1}^n \step_k^2 ||D^2\phi||_\infty,\label{Zn1}\ee and \be
\EE|Z_n^{(i)}|^2 \le \kappa_1\sum_{k=1}^n  \step_k^2
||D^2\phi||^2_\infty,\quad i=2,3, \label{Zn23}\ee
for some $\kappa_1 \in (0,\infty)$, for all $n \ge 1$.

(a) Now assume that $\lim_{n \rightarrow
\infty}(1/\sqrt{\stepsum_n}) \stepsum_n^{(3/2)}=0$. We then have
$\lim_{n \rightarrow \infty}\sum_{k=1}^n
\step_k^{2}/\sqrt{\stepsum_n}=0$, and it follows from \eqref{Zn1}
that $Z_n^{(1)}/\sqrt{\stepsum_n}\xrightarrow{L^1}0$. We also deduce
from \eqref{Zn23}, that
$Z_n^{(j)}/\sqrt{\stepsum_n}\xrightarrow{L^2}0$, for $j=2,3$.
Consider now $Z_n^{(4)}$. Denoting the Lipschitz norm of $D^2\phi$
by $L$, we have \bes |R_2(X_{k-1},X_{k})|\le
\frac{L}{2}|\Delta X_{k}|^3 \le \frac{L}{2}
a^3_1K^3(\step_{k}+ \sqrt{\step_{k}}|U_{k}|)^3,\ees where the second
inequality follows from the Lipschitz property of the Skorokhod map
(Condition \ref{skolip}). Thus, there exists  $\kappa_2 \in (0, \infty)$
such that, for all $n \ge 1$, \be \EE|Z_n^{(4)}| \le
\kappa_2\sum_{k=1}^n \step_k^{3/2}, \label{Zn4} \ee and therefore
$Z_n^{(4)}/\sqrt{\stepsum_n}\xrightarrow{L^1}0$.

We now, consider $Y_n^{(j)}$, for $j=1,2,3,4$. \bes Y_n^{(1)} =
\sum_{k=1}^n \langle \nabla \phi(X_{k-1}), y_{k-1}\rangle =
\sum_{k=1}^n D_i\phi(X_{k-1})L_{k-1}^i. \ees From \eqref{Lki}, we
have $|L_{k-1}^i| \le \kappa_3\sqrt{\step_k}(|U_k|+1)$. Also, for
any fixed $i$, $L_{k-1}^i$ is not equal to 0 only if there exists $x
\in F_i$, such that $||X_{k-1}-x|| \le a_1K\step_{k}+
a_1K\sqrt{\step_{k}} |U_{k}|$; and in that case, using Taylor's
theorem and the Lipschitz property of $D^2\phi$, there exists
 $\kappa_4 \in (0, \infty)$, such that, \bes
|D_i\phi(X_{k-1})-D_i\phi(x)-(X_{k-1}-x)^TD^2\phi(x) d_i |\le
\kappa_4 ||X_{k-1}-x||^2. \ees Combining this with \eqref{DD2}, we
have \bes |D_i\phi(X_{k-1})|\le \kappa_4 ||X_{k-1}-x||^2. \ees Thus
we have \be \EE|Y_n^{(1)}| \le \kappa_5\sum_{k=1}^n \step_k^{3/2},
\label{Yn1} \ee for some constant $\kappa_5$. Using similar
arguments as above, we obtain: \be \EE|Y_n^{(j)}| \le \kappa_5
\sum_{k=1}^n \step_k^{3/2}, \quad j =2,3,4.\label{Yn234}\ee Thus we
have that $Y_n^{(j)}/\sqrt{\stepsum_n}\xrightarrow{L^1}0$, for
$j=1,2,3,4$.

From Lemma \ref{lemma:tight}, and recalling the definition of $V$
(see \eqref{V}) we have that, for every $\zeta \in (0,c\varpi)$,
$$\sup_{n \in \NN} \int_G e^{\zeta|x|}\nu_n(dx) < \infty, \quad
\text{a.s.}$$ For such a $\zeta$, under the assumption that $\lim_{|x| \rightarrow
\infty}e^{-\zeta |x|}|\nabla \phi(x)|^2=0$, applying Lemma
\ref{lemma:normal}, we now have \bes \frac{N_n}{\sqrt{\stepsum_n}}
\xrightarrow[]{\cll} \cln\left(0, \int_G |\sigma^T\nabla
\phi|^2d\nu\right).\ees This completes the proof of part (a).

(b) Assume now that $\lim_{n \rightarrow
\infty}(1/\sqrt{\stepsum_n})\stepsum_n^{(3/2)}=\tilde{\step}\in
(0,+\infty]$. We then have that
\[\lim_{n \rightarrow
\infty}\stepsum_n^{(3/2)}=+\infty \mbox{ and } \lim_{n \rightarrow
\infty}\sum_{k=1}^n \step_k^2 /\stepsum_n^{(3/2)}=0.\] As before,
$Z_n^{(0)}/\stepsum_n^{(3/2)}\xrightarrow[]{\PP}0$. It follows from
\eqref{Zn1} that $Z_n^{(1)}/\stepsum_n^{(3/2)}\xrightarrow{L^1}0$,
and from \eqref{Zn23} that
$Z_n^{(j)}/\stepsum_n^{(3/2)}\xrightarrow{L^2}0$, for $j=2,3$.

Under the assumptions of part (b) (i.e. that $D^3\phi$ is bounded,
Lipschitz and \eqref{DD2D3} holds), we have, using similar arguments
as in part (a), for some $\kappa_6 \in (0,\infty)$, \be
\EE|Y_n^{(j)}| \le \kappa_6 \sum_{k=1}^n \step_k^{2}, \quad
j=1,...,4; \ n\ge1.\label{Yn1234}\ee Thus we have that
$Y_n^{(j)}/\stepsum_n^{(3/2)}\xrightarrow{L^1}0$, for $j=1,2,3,4$.

Applying Lemma \ref{lemma:normal} once again, we have, for $\phi$
satisfying $\lim_{|x| \rightarrow \infty}e^{-\zeta |x|}|\nabla
\phi(x)|^2=0$, \be \label{slown} \frac{N_n}{\sqrt{\stepsum_n}}
\xrightarrow[]{\cll} \cln\left(0, \int_G |\sigma^T\nabla
\phi|^2d\nu\right).\ee

Also from Lemma \ref{lemma:Zn4}  \be\label{slowzn42}
\frac{Z_n^{(4)}}{\stepsum_n^{(3/2)}}\xrightarrow[]{\PP}
\frac{1}{6}\int_G\int_{\RR^m}D^3\phi(x)(\sigma(x)u)^{\otimes 3}
\mu(du)\nu(dx)=-\tilde{m}.\ee

Now, if $\tilde{\step}<+\infty$, we have from the above observations
that $Z_n^{(j)}/\sqrt{\stepsum_n}\xrightarrow{\PP}0$, for
$j=0,1,2,3$, $Y_n^{(j)}/\sqrt{\stepsum_n}\xrightarrow{\PP}0$, for
$j=1,2,3,4$ and \be \label{slowzn41}
\frac{Z_n^{(4)}}{\sqrt{\stepsum_n}}\xrightarrow{\PP}-\tilde{\step}\tilde{m}.
\ee The statement in \eqref{slow1} now follows on combining this
with \eqref{slown}.

Finally, if $\tilde{\step}=+\infty$, we have
$Z_n^{(j)}/\stepsum_n^{(3/2)}\xrightarrow{\PP}0$, for $j=0,1,2,3$,
$Y_n^{(j)}/\stepsum_n^{(3/2)}\xrightarrow{\PP}0$, for $j=1,2,3,4$
and $N_n/\stepsum_n^{(3/2)}\xrightarrow{\PP}0$, and \eqref{slow2}
follows from \eqref{slowzn42}. This completes the proof of Theorem
\ref{thm:rate}.
$\hfill \Box$

\section{Numerical Results}\label{Sec:sim}

%
%
%
%

\subsection{Evaluation of the Euler Time Step.} \label{Sec:eval}
A key step in
simulating the sequence $\{X_k\}$ in \eqref{scheme2012} is the
evaluation of $\cls(X_k, Y_{k+1}-X_k)$, where $\cls:G \times \RR^m
\rightarrow G$ is the time-1 Skorokhod map defined in
\eqref{ins2000}. In this section we describe a procedure for
computing $\cls(x,v)$ that uses well known relationships between
Skorokhod problems and linear complementary problems (LCP). We
restrict ourselves to a setting where $N=m$ and $G=\RR^m_+$. We
begin by recalling the basic formulation of the LCP (see
\cite{CoPaSt09}). For $j \in \NN$, a $j \times j$ matrix $R$ and a
$j$-dimensional vector $\theta$, the LCP for $(R, \theta)$ is to
find vectors $u,v \in \RR^j$ such that \bes
\begin{cases}
u \ge 0, v\ge 0;\\
v=\theta + R u;\\
u\cdot v =0.
\end{cases}
\ees It is well known (see \cite{DupuisRamanan99} and
\cite{BudhirajaDupuis99}) that with $R=[d_1,...,d_m]$, under Condition \ref{skolip}, for
every $\theta \in \RR^m$, the LCP for $(R,\theta)$ admits a unique
solution $(u,v)\equiv(\cll_m^1(R,\theta),\cll_m^2(R,\theta))$, and furthermore
$\cll_m^2(R,\theta)=\cls(0,\theta)$. Thus the evaluation of
$\cls(0,\theta)$ reduces to solving the above LCP for which numerous
algorithms are available. In the examples considered in the current
work we used a quadratic programming algorithm. Evaluation of
$\cls(x,\theta)$ for $x\neq 0$ can be carried out using a
localization procedure as follows.

Fix $x\in G$ and let $J = \text{In}(x)=\{j \in \{1,...,m\}| \langle
x,e_j\rangle=0 \}$. Let $P_J =\{z \in \RR^m| \langle z,e_j\rangle=0,
\ \forall j\in J^c \}$. Let $\pi_J: \RR^m\rightarrow P_J$ be the
orthogonal projection: $$ \pi_J(z)=z-\sum_{j\in J^c}\langle
z,e_j\rangle e_j.$$ Suppose that $|J|=p$ and $J=\{i_1,...,i_p\}$.
Define a $p\times p$ matrix $R_J$ be the relation $R_J(k,l)=(\pi_J
d_{i_l})_{i_k}$, for $k,l=1,...p$. Let $u_J, v_J\in \RR^p$ be the
solution of LCP for $(R_J,\pi_J \theta)$, i.e., $(u_J,
v_J)=(\cll_p^1(R_J,\pi_J \theta),\cll_p^2(R_J,\pi_J \theta))$.
Once again unique solvability of LCP for $(R_J,\pi_J \theta)$ is assured from Condition \ref{skolip}. Denote $u_J=(\eta_1,...,\eta_p)$ and define $x_1(t)=x+\theta t + t
\sum_{j =1}^p\eta_jd_{i_j}$. Let \[\tau_1=\inf\{t \ge
0|\text{In}(x_1(t))\neq\text{In}(x)\}.\] We define $\tau_1 = \infty$ if the above set is empty. Then $\Gamma(x+\theta i)(t)=x_1(t)$ for all $t<\tau_1$. If $\tau_1<\infty$ set the initial point to be $x_1=x_1(\tau_1)$ and define the trajectory $\{x_2(t)\}_{t\ge 0}$ in a similar way as $\{x_1(t)\}$ by replacing $x$
with $x_1$. Set $\tau_2=\inf\{t \ge 0|\text{In}(x_2(t))\neq \text{In}(x_1)\}$.
Then \[\Gamma(x+\theta i)(\tau_1+t)=\Gamma(x_1+\theta
i)(t)=x_2(t), \mbox{ for all } t<\tau_2. \]  Define now recursively trajectory
$\{x_j(t)\}$ with time points $\tau_j$ and end points $x_j(\tau_j)$,
$j=3,4,...$. Let $j_0$ be such that $\sum_{i=1}^{j_0}\tau_i <1\le
\sum_{i=1}^{j_0+1}\tau_i$. Then
$$\cls(x,\theta)=\Gamma(x+\theta i)(1)=\Gamma(x_{j_0}+\theta
i)(1-\sum_{i=1}^{j_0}\tau_i).$$ Thus the evaluation of
$\cls(x,\theta)$ can be carried out by recursively solving a
sequence of LCP problems.

One difficulty in implementing the above scheme is the possibility
that $\sum_{i=1}^{\infty}\tau_i \le 1$. However using regularity  property of the Skorokhod map, we see that this occurs only
when $\cls(x,\theta)$ is zero. Thus in the practical
implementation of the algorithm we fix a finite threshold $L$ and
carry out the above recursive procedure at most $L$ times and set
$\cls(x,\theta)=0$ if $\sum_{i=1}^{L}\tau_i < 1$.

\subsection{Results.}\label{numerics}
\subsubsection{A 3-d Example with Product Form Stationary Distribution.}
Let $m=3$ and suppose that the reflection matrix is of the form $R=I+Q$, where $I$ is the
identity matrix, and $Q$ is given as
\[ Q=\left[ \begin{array}{ccc}
  0& 0.1&  -0.2\\
	-0.1&    0&      0\\
	  0.2&    0&     0
\end{array} \right].\] 
It can be checked that the spectral radius of $Q$ is less than 1, and so Conditions \ref{skolip} and \ref{completeS} hold.
Take the drift function $b(x)=[-1/2,-1/2,-1/2]^T$ and
$\sigma(x) = I$,  $x \in \RR^3_+$.  The stationary distribution $\nu$ for this example is of product form (see 
\cite{HarWil87}):
 $\exp(1.1667)\otimes \exp(1.0938)\otimes\exp(0.8537)$,
  where $\exp(\mu)$ is the exponential distribution with parameter $\mu$. In implementing the above numerical scheme, we set our initial point
to $X_0=[1,1,1]^T$, and simulate $\{X_k\}_{k=1}^n$ defined by equation
\eqref{scheme2012}, taking $U_k \sim \cln(0,I)$, $\step_k = 1/\sqrt{k}$ and
$n=10^7$. Figure \ref{fig:x1} shows the comparison between the exact
distribution with the first-coordinate marginal of the measure $\nu_n$. 

\begin{figure}[!htb]
\hspace{-0.7in}\begin{center}
 \includegraphics[width=0.6\linewidth]{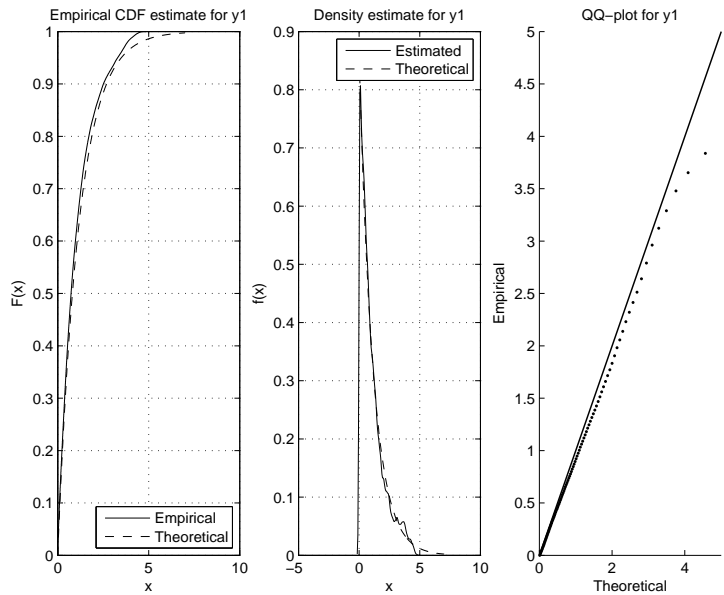}
   \caption{\label{fig:x1}
   Comparison between the nnumerically computed distribution with exact distribution. The left figure shows the comparison between the empirical cumulative distribution function (cdf) and the exact cdf. The middle figure makes a comparison between the estimated density function and the exact density function. And the right figure is the qq-plot of the empirical quantiles versus the exact quantiles.
   }\end{center}
\end{figure}

\subsubsection{Effect of Choice of $\{\lambda_k\}$.}
Consider a two-dimensional SRBM with covariance matrix
$\sigma(x)=I$, drift vector $b(x)=[-1,0]^T$ and reflection matrix
\bes R=\begin{pmatrix} 1 & 0 \\ -1 & 1 \end{pmatrix}.\ees This example was
considered in \cite{DaiHarr92}. We consider the first moment of
the $x_1$-coordinate. The exact value for this moment is known to be 0.5. We
consider $\step_k = k^{-\alpha}$ and examine the influence of the choice of
$\alpha$ on the numerical performance. The results are given in Figure \ref{fig:2dsrbm}. We find that $\alpha = 0.5$ gives the best numerical
convergence.

\begin{figure}[!htb]
\hspace{-0.7in}\begin{center}
 \includegraphics[width=0.6\linewidth]{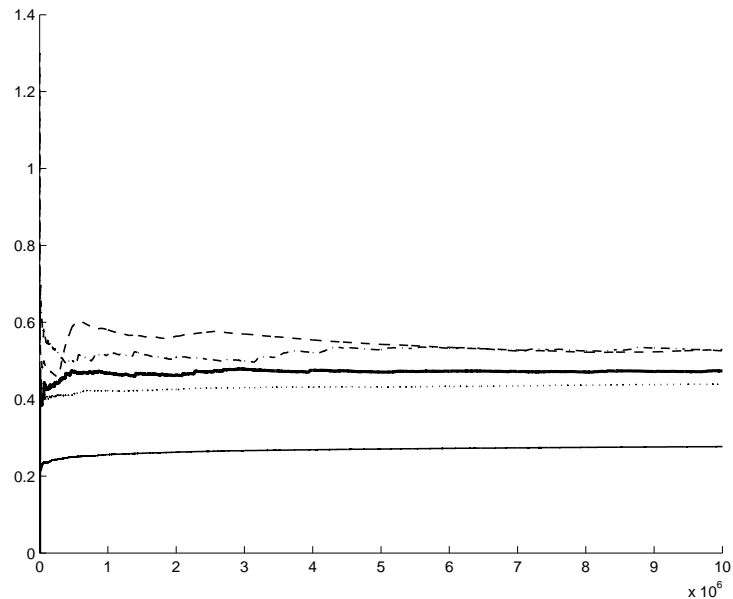}
   \caption{\label{fig:2dsrbm} We consider time step sequence $\step_n =
n^{-\alpha}$ with different choice of $\alpha$ and study the influence of
$\alpha$ on numerical convergence. The thin solid line, the dotted line, the thick solid line, the
   dash-dot line, and the dashed line correspond to $\alpha=$0.1,
   0.3, 0.5, 0.7, and 0.9 respectively. The x-axis shows the value of $n$ while the
y-axis corresponds to $\int x_1\nu_n (dx)$.
   }\end{center}
\end{figure}

\subsubsection{An 8-d symmetric SRBM.}
A SRBM is said to be symmetric if its covariance matrix
$\Gamma$, drift vector $\mu$ and reflection matrix $R$ are symmetric
in the following sense: $\Gamma_{ij}=\Gamma_{ji}=\rho$ for $1 \le i
<j \le d$, $\mu_i=-1$ for $1 \le i \le d$ and $R_{ij}=R_{ji}=-r$ for
$1 \le i <j \le d$, where $r \ge 0$ . The positiveness of $\Gamma$ implies $-1/(d-1) <\rho <1$
and the completely-$\cls$ condition of $R$ implies $r(d-1)<1$. In
this case, It is known (see \cite{DaiHarr92}) that, the first moment of
each of the component is the same, and is given by the following
formula
$$m_1=\frac{1-(d-2)r+(d-1)r\rho}{2(1+r)}.$$

Here we take $d=8$. Then the conditions on the data yield
$-1/7<\rho<1$ and $0 \le r <1/7$. Letting $\rho$ range through
$\{-0.1, -0.05, 0, 0.2, 0.9\}$, and $r$ take value $0.1$, we obtain
estimates of $m_1$ using algorithm in this work. We take $\step_k =
k^{-\alpha}$, $\alpha = 0.5$ and $n=10^7$. The results are shown
in Table \ref{tab:8d}. The results show that as the correlation coefficient
$\rho$ approaches 1, the performance of the algorithm deteriorates.

\begin{table}[!h]
\tabcolsep 0pt \caption{Estimates for $m_1$ when
$d=8$.\label{tab:8d}} \vspace*{-12pt}
\begin{center}
\def\temptablewidth{0.5\textwidth}
{\rule{\temptablewidth}{1pt}}
\begin{tabular*}{\temptablewidth}{@{\extracolsep{\fill}}cccccc}
$\rho$ & -0.1 &-0.05 &0 &0.2 &0.9  \\   \hline
Estimated Val. & 0.131     &  0.137       &     0.163     & 0.414 & 3.205     \\
True Val.     &  0.150      &   0.166     & 0.182 & 0.246 &  0.468  \\
\end{tabular*}
{\rule{\temptablewidth}{1pt}}
\end{center}
\end{table}

\appendix
\section{Appendix}\label{appendix}

\begin{lemma}\label{lemma:supp}
Let $U$ be a random variable with bounded support. Suppose that $\EE U=0$. Then there exists $\alpha \in (0,\infty)$, such that \bes
\EE e^{\lambda U} \le e^{\alpha \lambda^2} \text{ for all } \lambda
\in \RR. \ees
\end{lemma}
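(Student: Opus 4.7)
The plan is to use a classical convexity argument, essentially Hoeffding's lemma. Since $U$ has bounded support, there exists $M \in (0,\infty)$ such that $|U| \le M$ almost surely. First I would exploit the convexity of $x \mapsto e^{\lambda x}$ on the interval $[-M, M]$ to write, for every $u \in [-M, M]$,
\begin{equation*}
e^{\lambda u} \le \frac{M-u}{2M} e^{-\lambda M} + \frac{M+u}{2M} e^{\lambda M}.
\end{equation*}
Taking expectations and using $\EE U = 0$ collapses the linear term, yielding
\begin{equation*}
\EE e^{\lambda U} \le \tfrac{1}{2}\bigl(e^{-\lambda M} + e^{\lambda M}\bigr) = \cosh(\lambda M).
\end{equation*}

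Next I would show $\cosh(t) \le e^{t^2/2}$ for all $t \in \RR$. The cleanest route is a term-by-term comparison of the Taylor series, since $(2k)! \ge 2^k k!$ for all $k \ge 0$, so that the $k$-th coefficient $1/(2k)!$ of $\cosh$ is bounded by the $k$-th coefficient $1/(2^k k!)$ of $e^{t^2/2}$. Applying this with $t = \lambda M$ gives
\begin{equation*}
\EE e^{\lambda U} \le e^{\lambda^2 M^2/2},
\end{equation*}
so the lemma holds with $\alpha = M^2/2$.

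There is no serious obstacle here; the only small subtlety is the case where the support of $U$ is not symmetric about zero, but this is harmless because we only need an upper bound on $M$: if $U$ takes values in $[a,b]$ with $a \le 0 \le b$ (forced by $\EE U = 0$ unless $U \equiv 0$), choosing $M = \max(|a|,|b|)$ suffices, or alternatively one can use the sharper $(b-a)^2/8$ version of Hoeffding's lemma directly. Either way, the bound is of the required form $e^{\alpha \lambda^2}$.
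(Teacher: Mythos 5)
Your proof is correct and takes essentially the same route as the paper: convexity of $x \mapsto e^{\lambda x}$ to reduce to $\cosh(\lambda M)$, then the termwise Taylor comparison $\cosh(t) \le e^{t^2/2}$. The paper simply normalizes to $|U| \le 1$ at the outset (so $\alpha = 1/2$), whereas you carry $M$ through explicitly; otherwise the two arguments are identical.
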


\begin{proof}
Without lots of generality we assume that $|U| \le 1$.

Using the convexity of the function $e^{\lambda
x}$, we have \bes e^{\lambda U} \le
\frac{U+1}{2}e^{\lambda}+\frac{1-U}{2}e^{-\lambda}. \ees Taking
expectations in the above inequality and using Taylor's expansion, we have
\bes \EE e^{\lambda U} \le \frac{e^{\lambda}+e^{-\lambda}}{2} \le
e^{\frac{\lambda^2}{2}}. \ees  The lemma then follows on taking
$\alpha = \frac{1}{2}$.
\end{proof}

%
\end{document}